\theoremstyle{plain}
\newtheorem{theorem}{Theorem}[section]
\newtheorem{proposition}[theorem]{Proposition}
\newtheorem{lemma}[theorem]{Lemma}
\newtheorem{problem}{Problem}
\newtheorem{remark}[theorem]{Remark}
\newcommand{\muc}[2]{\mu_{#2,\S\ref{#1}}}
\newcommand{\pmuc}[2]{\mu_{#2,\S\ref{#1}}}
\numberwithin{equation}{section}
\newcommand{\scatam}{f_E}
\newcommand{\scatamu}[1]{f^{#1}_E}
\newcommand{\scataml}[1]{f_{E,#1}}
\newcommand{\scatamul}[2]{f^{#1}_{E,#2}}
\newcommand{\scatapprox}{\widetilde{f}_E^{\text{appr}}}
\newcommand{\dscatamu}[1]{\underline{f}^{#1}_E}
\newcommand{\dscatamul}[2]{\underline{f}^{#1}_{E,#2}}
\newcommand{\dscatapprox}{\widetilde{\underline{f}}_E^{\text{appr}}}
\renewcommand{\Re}{\mathop{\mathrm{Re}}}
\renewcommand{\Im}{\mathop{\mathrm{Im}}}
\DeclareMathOperator{\supp}{supp}
\DeclareMathOperator{\diag}{diag}
\DeclareMathOperator{\argmin}{argmin}
\newcommand{\spcind}{n}
\newcommand{\spcBd}{N}
\begin{document}
\begin{center}\LARGE An iterative approach to monochromatic phaseless inverse scattering\end{center}
\begin{center}\it A. D. Agaltsov\footnote{Max-Planck Institute for Solar Systems Research, 
Justus-von-Liebig-Weg 3, 37077 G\"ottingen, Germany 
  (\url{agaltsov@mps.mpg.de}).}, 
T. Hohage\footnote{Institute for Numerical and 
Applied Mathematics, University of G\"ottingen, Lotzestr. 16-18, 37083 G\"ottingen, Germany 
  (\url{hohage@math.uni-goettingen.de}) and Max-Planck Institute for Solar Systems Research.}, R. G. Novikov\footnote{CMAP, Ecole Polytechnique, CNRS, Universit\'e Paris-Saclay, 91128 Palaiseau, France; and IEPT RAS, 117997 Moscow, Russia
  (\url{novikov@cmap.polytechnique.fr}).}\end{center}
\begin{center} \today \end{center}

\begin{abstract}
This paper is concerned with the inverse problem to recover a compactly supported Schr\"odinger potential  
given the differential scattering cross section, i.e.\ the modulus, but not the 
phase of the scattering amplitude. To compensate for the missing phase information we assume 
additional measurements of the differential cross section in the presence of known background objects.  
We propose an iterative scheme for the numerical solution of this problem and prove that it converges 
globally of arbitrarily high order depending on the smoothness of the unknown potential as the energy 
tends to infinity. At fixed energy, however, the proposed iteration does not converge to the true solution even 
for exact data. Nevertheless, numerical experiments show that it yields remarkably accurate 
approximations with small computational effort even for moderate energies. 
At small noise levels it may be worth 
to improve these approximations by a few steps of a locally convergent iterative regularization method, 
and we demonstrate to which extent this reduces the reconstruction error. \medskip

\textbf{Keywords:} Inverse scattering problems, phaseless inverse scattering, Schr\"odinger equation\medskip

\textbf{AMS subject classification:} 
35J10, 
35R30, 
65N21, 
81U40, 
78A46  
\end{abstract}

\section{Introduction}\label{sec.in}
In quantum mechanics  the interaction of an elementary particle 
at fixed energy $E>0$ with a macroscopic object  contained in a bounded domain $D$ is described by the 
Schr\"odinger equation 
\begin{subequations}
\begin{equation}\label{in.schreq}
    -\Delta \psi + v(x)\psi = E\psi, \quad x \in \mathbb R^d. 
\end{equation}
Here $\Delta$ is the standard Laplacian in $x$, and the potential function $v$ is assumed to satisfy 
\begin{equation}\label{in.vprop}
    v \in L^\infty(\mathbb R^d), \quad \supp v \subset D\subset \mathbb R^d, d\geq 2.
\end{equation}
Equation \eqref{in.schreq} can be also considered as the Helmholtz equation of acoustic and electrodynamic wave propagation at fixed frequency. 

For equation \eqref{in.schreq} we consider the classical scattering solutions $\psi^+(\cdot,k)$ of the 
form $\psi^+(x,k) = e^{ikx}+ \psi^{\rm s}(x,k)$ with a plane incident field $e^{ikx}$ such that $k \in \mathbb R^d$, $|k|^2=E$, 
and a scattered field $\psi^{\rm s}(\cdot,k)$ satisfying the Sommerfeld radiation condition 
\begin{equation}
|x|^{\frac{d-1}{2}}\left(\frac{\partial}{\partial |x|}-i|k|\right)\psi^{\rm s}(x,k) \to 0 \qquad \mbox{as }|x|\to \infty
\end{equation}
\end{subequations}
uniformly in $x/|x|$. This implies that $\psi^{\rm s}$ has the asymptotic behavior 
\begin{equation}\label{in.psi+as}
    \begin{aligned}
    &\psi^{\rm s}(x,k) = c(d,|k|) \frac{e^{i|k||x|}}{|x|^{\frac{d-1}{2}}} 
		\scatam\left(k,|k|\tfrac{x}{|x|}\right) 
		+ \mathcal{O}\left(|x|^{-\frac{d+1}{2}}\right),
	\quad |x|\to \infty,\\
	&c(d,|k|) := -\pi i (-2\pi i)^{(d-1)/2} |k|^{(d-3)/2}
	\end{aligned}
\end{equation}
with a function $\scatam$ called the scattering amplitude or
far field pattern at energy $E$. 
There are different conventions for the choice of the constant $c(d,|k|)$. The one above leads to the following 
simple asymptotic relation between the scattering amplitude and the inverse Fourier transform of $v$, 
see, e.g., \cite{Fad1956,Nov2015en}: 
\begin{gather}
    \widehat v(k-l) = \scatam(k,l) + \mathcal{O}(E^{-\frac 1 2}), \quad E \to + \infty, \label{in.vborn}\\ 
    \widehat v(p) := (2\pi)^{-d} \int_{\mathbb R^d} e^{ipx} v(x) \, dx, \quad p \in \mathbb R^d\label{in.fourier}
\end{gather}
$|\scatam(k,l)|^2$ is known as the differential scattering cross section for equation \eqref{in.schreq}. In quantum mechanics this quantity describes the probability density of scattering of particle with initial impulse $k$ into direction $l/|l| \neq k/|k|$, see, for example, \cite[Chapter 1, Section 6]{Fad1993}. Typically, the differential 
cross section is the only measurable quantity whereas the phase of the scattering amplitude cannot be 
determined directly by physical experiments. 
The problem of finding $v$ from $|\scatam|^2$ is known as the phaseless inverse scattering problem for equation \eqref{in.schreq}. Whereas the inverse scattering problem with phase information for equation \eqref{in.schreq}, i.e.\ 
the problem of finding $v$ from $\scatam$, has been studied intensively for a long time 
(see \cite{Alex2008,Barc2016,Bur2009en,Chad1989,Eskin2011,Fad1956,Grin2000,Haeh2001,Isay2013b,Isay2013c,Nov1988en,Nov1998,Nov2005b,Nov2013b,Nov2015en} and references therein),   
much less studies were performed in the phaseless case (see \cite{Agal2016b,Chad1989,Klib2014,Klib2016a,romanov2018phaseless,Nov2015LS,Nov2015b,Nov2016}).  

It is well known that the phaseless scattering data $|\scatam|^2$ does not determine $v$ uniquely even if $|\scatam|^2$ is given completely for all $E>0$; see, e.g., \cite{Nov2016}.
In the present work we continue studies of \cite{Nov2016,Agal2016b} assuming additional measurements 
of the following form: 
For the unknown $v$ satisfying \eqref{in.vprop} we consider additional a priori known background scatterers 
$w_1$, \dots, $w_L$ such that
\begin{equation}\label{in.wjass}
\begin{gathered}
        w_l \in L^\infty(\mathbb R^d), \quad \supp w_l \subset \Omega_l,\\
        \text{$\Omega_l$ is an open bounded domain in $\mathbb R^d$},\\
        w_l \neq 0, \quad \text{$w_{l} \neq w_{\tilde{l}}$ if $l \neq \tilde{l}$ (in $L^\infty(\mathbb R^d$)),}   
\end{gathered}
\end{equation}
where $l,\tilde{l} \in \{1,\ldots,L\}$.
In practice, we also typically have 
\[
\Omega_l \cap D = \varnothing,
\]
but this property will not be needed in our analysis.
We set
\begin{equation}
        S = \{ |\scatam|^2, |\scataml{1}|^2, \ldots, |\scataml{L}|^2\},
\end{equation}
where $\scatam$ is the scattering amplitude for $v$ at energy $E$, and $\scataml{1}$, \ldots, $\scataml{L}$ are the scattering amplitudes for $v_1$, \dots, $v_L$, where
\begin{equation}\label{in.defvj}
        v_l = v + w_l, \quad l = 1,\ldots, L.
\end{equation}
One can see that $S$ consists of the phaseless scattering data 
$|\scatam|^2$, $|\scataml{1}|^2$, \dots, $|\scataml{L}|^2$ measured sequentially, first, for the unknown scatterer $v$ and then for the unknown scatterer $v$ in the presence of known scatterer $w_l$ disjoint from $v$ for $l = 1$, \dots, $L$.
We consider the following inverse scattering problem without phase information for equation \eqref{in.schreq}:

\begin{problem}\label{in.probpisc} Reconstruct coefficient $v$ from the phaseless scattering data $S$ for some appropriate background scatterers $w_1$, \ldots, $w_L$.
\end{problem}

In this paper we propose an iterative approach to Problem \ref{in.probpisc} with iterates $u^{(j)}_E$, 
$j=1,2,\dots$ and prove error 
bounds of the form 
\begin{equation}\label{eq:error_bound}
\|u^{(j)}_E-v\|_{L^\infty} = \mathcal{O}\left(E^{-\alpha_j}\right)
\end{equation}
with exponents $\alpha_j$ tending to $\infty$ as $j\to \infty$ for infinitely smooth potentials $v$. 

For the inverse scattering problem with phase information such a substantial improvement of 
the Born approximation, which serve as first iterate $u_1$ (see \eqref{in.vborn}) has been obtained 
in \cite{Nov2015en}, and first numerical tests were reported in \cite{Barc2016}. 

Studies on Problem \ref{in.probpisc} in dimension $d=1$ for $L = 1$ were started in \cite{Akto1998}, where phaseless scattering data was considered for all $E > 0$. Note also that a phaseless optical imaging in the presence of known background objects was considered, in particular, in \cite{Gov2009}. Studies on Problem \ref{in.probpisc} in dimension $d \geq 2$ were started in \cite{Nov2016} and continued recently in \cite{Agal2016b}.
The key result of \cite{Nov2016} consists in a proper extension of formula \eqref{in.vborn} for the Fourier transform $\widehat v$ of $v$ to the phaseless case of Problem \ref{in.probpisc}, $d \geq 2$, which will be 
detailed in Section \ref{sec.pex}.
The main results of \cite{Agal2016b} consist in proper extensions of formula \eqref{in.utov} in the configuration space to the case of Problem \ref{in.probpisc} for $d \geq 2$; see also Section \ref{sec.pex}.
However, the convergence of the approximations to $v$ as $E \to +\infty$ in \cite{Agal2016b} is slow, 
in particular, the exponent $\alpha$ in \eqref{eq:error_bound} is always $\leq \frac{1}{2}$. 


In addition, our theoretical iterative monochromatic reconstructions for Problem \ref{in.probpisc} are illustrated numerically in Section \ref{sec.num}.

\section{Iterative inversion with phase information}\label{sec.prelim}

\subsection{Inverse scattering with phase information}\label{sse.pha} 
Recall that the scattering amplitude $\scatam$ is defined on the set 
\begin{equation}
         \mathcal M_E = \bigl\{ (k,l) \in \mathbb R^d \times \mathbb R^d \colon |k|^2 = |l|^2 = E \bigr\}.
\end{equation}
In view of \eqref{in.vborn} we assume that the scattering amplitude, 
and later the differential 
cross section is defined on some subset $\mathcal{M}^{\mathrm m}_E\subset \mathcal M_E$ 
such that the function 
\begin{equation}\label{eq:defiPhi}
\tilde{\Phi}\colon\mathcal{M}^{\mathrm m}_E\to \mathcal B^d_{2\sqrt E}\qquad  
\tilde{\Phi}(k,l):=k-l
\end{equation}
is surjective. Here and in the following $\mathcal B^d_r$ denotes the closed  ball 
\begin{equation}
   \mathcal B^d_r = \bigl\{ p \in \mathbb R^d \colon |p| \leq r \bigr\}, \quad r > 0. \label{pex.defBr}
\end{equation}
For $d \geq 2$ we may construct a $d$-dimensional subset $\mathcal{M}^{\mathrm m}_E
\subset \mathcal M_E$ such that $\tilde{\Phi}$ is bijective as follows: 
Let us choose a piece-wise continuous function 
$\gamma:\mathbb \mathcal B^d_{2\sqrt{E}}\to \mathbb R^d$ such that 
$|\gamma(p)| = 1$ and $\gamma(p) p = 0$ for all $p\in  \mathcal B^d_{2\sqrt{E}}$ and set 
\begin{gather}
\mathcal{M}^{\mathrm m}_E = \left\{\left(
 \tfrac p 2 + \bigl(E - \tfrac{p^2}{4}\bigr)^{1/2} \gamma(p), 
-\tfrac p 2 + \bigl(E - \tfrac{p^2}{4}\bigr)^{1/2} \gamma(p)
\right) \colon p \in \mathcal B^d_{2\sqrt E} \right\}. \label{in.defGE}
    \end{gather}
To use the Born approximation \eqref{in.vborn} and its 
refinements if $\tilde{\Phi}$ is not injective, we average over the set 
$\tilde\Phi^{-1}(p)$. To this end we assume that for all $p\in\mathcal{B}^d_{2\sqrt{E}}$ the set $\tilde{\Phi}^{-1}(p)$ is a piecewise smooth manifold of size 
$\left|\tilde\Phi^{-1}(p)\right|$ and define the averaging operator 
\begin{equation}\label{eq:defPhi}
\begin{aligned}
&\Phi : L^1(\mathcal{M}^{\mathrm m}_E) \to L^1\left(\mathcal B^d_{2\sqrt{E}}\right),\qquad 
(\Phi f)(p):= \tfrac{1}{\left|\tilde{\Phi}^{-1}(p)\right|}
\int_{\tilde{\Phi}^{-1}(p)} \scatam(k,l)\,d(k,l).
\end{aligned}
\end{equation}
Using this mapping we can define an approximation $u_E$ to $v$ on $D$ by 
\begin{equation}\label{eq:defi_uE}
u_E(x):=\int_{\mathcal B^d_{2\sqrt{E}}} e^{-ipx}(\Phi f)(p)\,dp, \qquad x\in D. 
\end{equation}
Let $W^{n,1}(\mathbb R^d)$ denote the Sobolev space of $n$-times smooth functions in the sense of $L^1(\mathbb R^d)$:
\begin{equation}\label{in.Wn1def}
  \begin{aligned}
   & W^{n,1}(\mathbb R^d) := \bigl\{ u \in L^1(\mathbb R^d) \colon \|u\|_{n,1} < \infty \bigr\}\quad\mbox{with} \\
   & \|u\|_{n,1} := \max\limits_{|J| \leq n} \left\| \frac{\partial^{|J|} u}{\partial x^J}\right\|_{L^1(\mathbb R^d)}, \quad n \in \mathbb N \cup \{0\}.
  \end{aligned}
\end{equation}
If $v \in W^{n,1}(\mathbb R^d)$, $n > d$, in addition to the initial assumptions \eqref{in.vprop}, then $u_E$ satisfies the error bound 
\begin{equation}\label{in.utov}
    \|u_E-v\|_{L^{\infty}(D)} = \mathcal{O}(E^{-\alpha}) \quad  
		\mbox{as $E\to+\infty$}\qquad \mbox{with } \alpha = \frac{n-d}{2n}
\end{equation}
for $d \geq 2$; see, for example, \cite{Nov2015en}. 
Essential improvements of the approximation $u_E$ in \eqref{eq:defi_uE} 
were achieved in \cite{Nov1998,Nov2005b,Nov2015en}. In particular, formula \eqref{in.utov} was principally improved in \cite{Nov2015en} by constructing iteratively nonlinear approximate reconstructions $u^{(j)}_E$ such that 
$u^{(1)}_E=u_E$ and 
\begin{equation}\label{in.ujtov}
    \begin{gathered}
        \|u^{(j)}_E - v\|_{L^{\infty}(D)} = \mathcal{O}(E^{-\alpha_j}) \quad 
				\text{with }
        \alpha_j = \tfrac{n-d}{2d} \left(1 - \left(\tfrac{n-d}{n}\right)^j\right) , \quad j \geq 1,
    \end{gathered}
\end{equation}
as $E\to+\infty$ for $d \geq 2$ 
if $v \in W^{n,1}(\mathbb R^d)$, $n > d$, in addition to the initial assumptions \eqref{in.vprop}. The point is that 
\begin{equation}\label{in.alphalim}
    \begin{aligned}
    & \alpha_j \to \alpha_\infty = \tfrac{n-d}{2d} & \text{as $j \to +\infty$}, \\
    & \alpha_j \to \tfrac j 2 & \text{as $n \to +\infty$},\\
    & \alpha_\infty \to +\infty & \text{as $n \to +\infty$};
    \end{aligned}
\end{equation}
that is the convergence in \eqref{in.ujtov} as $E \to +\infty$ is drastically better than the convergence in \eqref{in.utov}, at least, for large $n$ and $j$. 

\subsection{Iterative step for phased inverse scattering}\label{sec.itp}
Recall that the outgoing fundamental solution to the Helmholtz equation is given by 
\[ 
G^+(x,k) = -(2\pi)^{-d} \int_{\mathbb R^d} \frac{e^{i\xi x}d\xi}{\xi^2-k^2-i0}
= \frac i 4 \big( \frac{k}{2\pi |x-y|} \big)^\nu H^{(1)}_\nu (k|x-y|)
\quad\mbox{with}\quad \nu := \tfrac d 2 - 1
,\label{in.G+def}
\]
where $H^{(1)}_{\nu}$ denotes the Hankel function of the first kind of order $\nu$. 
Let $\mathcal{G}^+(k)$ denote the convolution operator with kernel $G^+(\cdot,k)$. 
The following estimate, which goes back to \cite{Agmon1975}, is essential for studies on direct scattering (see, e.g., \cite{Eskin2011} (\S 29), \cite{Nov2015en}, and references therein) and will also be crucial for our 
analysis:  
\begin{equation}\label{in.Agmon}
  \begin{gathered}
  \| \Lambda^{-s} \mathcal{G}^+(k) \Lambda^{-s} \|_{L^2(\mathbb R^d) \to L^2(\mathbb R^d)} \leq a_0(d,s) |k|^{-1}, \\
  k \in \mathbb R^d, \; |k| \geq 1, \quad \text{for $s > \tfrac 1 2$}.
  \end{gathered}
\end{equation}
Here $\Lambda^{-s}$ denotes the operator of multiplication by $(1+|x|^2)^{-s/2}$. 

Let $v$, satisfying \eqref{in.vprop}, be the unknown potential and 
$v^*_E$ be an approximation to $v$, and assume that there exist constants 
$A,E^*,K,\alpha>0$ such that 
\begin{subequations}\label{eqs:itp.v*}
\begin{align}
 & v^*_E \in L^\infty(\mathbb R^d), \quad \supp v^*_E \subset D \label{itp.v*prop}\\
  &\|v^*_E-v\|_{L^{\infty}(D)} \leq A E^{-\alpha}\label{itp.v*est}\\
	&\|v\|_{L^\infty(\mathbb R^d)} \leq K, \quad \|v^*_E\|_{L^\infty(\mathbb R^d)} \leq K \label{itp.vv*leM}
\end{align}
\end{subequations}
for all $E \geq E^*$.

For inverse scattering with phase information the iterative step of \cite{Nov2015en} is based on the following lemma.

\begin{lemma}\label{itp.lemma} Let $v$, satisfying \eqref{in.vprop}, be the unknown potential and $v^*(\cdot,E)$ be an approximation to $v$ satisfying \eqref{eqs:itp.v*} for some $A > 0$, $\alpha \geq 0$, $K>0$ and $E^* = E^*(K,D)$, where
\begin{equation}\label{itp.E*def}
  \begin{gathered}
	E^*(K,D) = 2 K a_0(d,s) \sup_{x \in D} (1+|x|^2)^{s / 2},\\
	\text{for some $s > \tfrac 1 2$, where $a_0(d,s)$ is the constant of \eqref{in.Agmon}}.
  \end{gathered}
\end{equation}
Let $\scatam$, $\scatamu{*}$ be the scattering amplitudes of $v$, $v^*_E$. Then 
there exists a constant $\muc{sec.itp}{1} = \muc{sec.itp}{1}(A,K,D) > 0$ such that 
\begin{equation}\label{itp.ff*vv*est}
    \sup_{(k,l) \in \mathcal M_E}|\scatam(k,l) - \scatamu{*}(k,l) + \widehat{v^*_E}(k-l) - \widehat v(k-l)| \leq \muc{sec.itp}{1} E^{-\alpha - \frac 1 2},
    \quad E \geq E^*,
\end{equation}
 where $\widehat v$, $\widehat{v^*_E}$ are the Fourier transforms of $v$, $v^*_E$ defined according to \eqref{in.fourier}.
\end{lemma}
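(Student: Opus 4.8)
The plan is to express the scattering amplitudes through the Lippmann--Schwinger equation, to isolate the contributions of the scattered waves, and to estimate these by the Agmon bound \eqref{in.Agmon}. Recall that the scattering solution satisfies $\psi^+(\cdot,k)=e^{ik\cdot}+\mathcal G^+(k)\bigl(v\,\psi^+(\cdot,k)\bigr)$ and that, with the normalization fixed in \eqref{in.psi+as} (cf.\ the derivation of \eqref{in.vborn}), $\scatam(k,l)=(2\pi)^{-d}\int_{\mathbb R^d}e^{-ily}\,v(y)\,\psi^+(y,k)\,dy$. Subtracting the Born term $\widehat v(k-l)$, which corresponds to replacing $\psi^+$ by $e^{ik\cdot}$, gives $\scatam(k,l)-\widehat v(k-l)=(2\pi)^{-d}\int_{\mathbb R^d}e^{-ily}\,v(y)\,\psi^{\rm s}(y,k)\,dy$, and similarly $\scatamu{*}(k,l)-\widehat{v^*_E}(k-l)=(2\pi)^{-d}\int_{\mathbb R^d}e^{-ily}\,v^*_E(y)\,\psi^{{\rm s},*}(y,k)\,dy$, where $\psi^{{\rm s},*}$ is the scattered field for $v^*_E$. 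Hence the quantity inside the supremum in \eqref{itp.ff*vv*est} equals $(2\pi)^{-d}\int_{\mathbb R^d}e^{-ily}\bigl(v(y)\psi^{\rm s}(y,k)-v^*_E(y)\psi^{{\rm s},*}(y,k)\bigr)\,dy$, and I would split its integrand as $(v-v^*_E)\psi^{\rm s}+v^*_E\bigl(\psi^{\rm s}-\psi^{{\rm s},*}\bigr)$. Since $\supp v\subset D$ and $\supp v^*_E\subset D$, using $\|g\|_{L^1(D)}\le|D|^{1/2}\|g\|_{L^2(D)}$ it then suffices to prove, uniformly in $(k,l)\in\mathcal M_E$ for $E\ge E^*$, that $\|\psi^{\rm s}(\cdot,k)\|_{L^2(D)}=\mathcal O(E^{-1/2})$ and $\|\psi^{\rm s}(\cdot,k)-\psi^{{\rm s},*}(\cdot,k)\|_{L^2(D)}=\mathcal O(E^{-\alpha-1/2})$, and to combine these with \eqref{itp.v*est} and \eqref{itp.vv*leM}.

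To control the scattered fields I would work in the weighted space with norm $f\mapsto\|(1+|x|^2)^{-s/2}f\|_{L^2(\mathbb R^d)}$, for the same $s>\tfrac12$ as in \eqref{itp.E*def}, so that \eqref{in.Agmon} reads $\|\mathcal G^+(k)g\|\le a_0(d,s)|k|^{-1}\|(1+|x|^2)^{s/2}g\|_{L^2}$ for the weighted norm on the left. Writing $M_v$, $M_{v^*_E}$ for multiplication by $v$, $v^*_E$, the identity above is $(I-\mathcal G^+(k)M_v)\psi^{\rm s}=\mathcal G^+(k)\bigl(v\,e^{ik\cdot}\bigr)$. Exploiting $\supp v\subset D$ --- which lets one trade the growing and decaying weights against $\sup_{x\in D}(1+|x|^2)^{s/2}$ --- together with \eqref{in.Agmon}, \eqref{in.vprop} and \eqref{itp.vv*leM}, the choice of $E^*$ in \eqref{itp.E*def} makes $\mathcal G^+(k)M_v$ a contraction with constant $\le\tfrac12$ in this space for $E\ge E^*$; hence $(I-\mathcal G^+(k)M_v)^{-1}$ exists with norm $\le2$, and since $\|\mathcal G^+(k)(v\,e^{ik\cdot})\|\le a_0(d,s)|k|^{-1}K\,|D|^{1/2}\sup_{x\in D}(1+|x|^2)^{s/2}$ we get $\|\psi^{\rm s}\|=\mathcal O(E^{-1/2})$, and therefore $\|\psi^{\rm s}\|_{L^2(D)}=\mathcal O(E^{-1/2})$. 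For the difference, subtracting the Lippmann--Schwinger identities for $v$ and $v^*_E$ and using $\psi^+-\psi^{+,*}=\psi^{\rm s}-\psi^{{\rm s},*}$ yields $(I-\mathcal G^+(k)M_{v^*_E})\bigl(\psi^{\rm s}-\psi^{{\rm s},*}\bigr)=\mathcal G^+(k)\bigl((v-v^*_E)\psi^+\bigr)$. The same invertibility argument applied to $v^*_E$ (which satisfies \eqref{itp.v*prop} and \eqref{itp.vv*leM}), combined with $\|\mathcal G^+(k)((v-v^*_E)\psi^+)\|\le a_0(d,s)|k|^{-1}\sup_{x\in D}(1+|x|^2)^{s/2}\,\|v-v^*_E\|_{L^\infty(D)}\,\|\psi^+\|_{L^2(D)}$, \eqref{itp.v*est}, and $\|\psi^+\|_{L^2(D)}\le|D|^{1/2}+\|\psi^{\rm s}\|_{L^2(D)}=\mathcal O(1)$, gives $\|\psi^{\rm s}-\psi^{{\rm s},*}\|_{L^2(D)}=\mathcal O(E^{-\alpha-1/2})$.

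Assembling the estimates, for every $(k,l)\in\mathcal M_E$ and $E\ge E^*$ one obtains $\bigl|\scatam(k,l)-\scatamu{*}(k,l)+\widehat{v^*_E}(k-l)-\widehat v(k-l)\bigr|\le(2\pi)^{-d}|D|^{1/2}\bigl(\|v-v^*_E\|_{L^\infty(D)}\,\|\psi^{\rm s}\|_{L^2(D)}+K\,\|\psi^{\rm s}-\psi^{{\rm s},*}\|_{L^2(D)}\bigr)=\mathcal O(E^{-\alpha-1/2})$, with an implied constant depending only on $A$, $K$, $D$ (and on the fixed $d$ and $s$); this constant is the desired $\muc{sec.itp}{1}(A,K,D)$.

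The main difficulty is the estimate of the scattered fields: one has to set up the weighted-$L^2$ framework so that \eqref{in.Agmon} applies, and then check that the specific threshold $E^*$ of \eqref{itp.E*def} really forces the Neumann series for $(I-\mathcal G^+(k)M_v)^{-1}$ and $(I-\mathcal G^+(k)M_{v^*_E})^{-1}$ to converge uniformly for $E\ge E^*$; this rests on carefully using $\supp v,\supp v^*_E\subset D$ to bound the relevant operator norms by a constant times $|k|^{-1}\sup_{x\in D}(1+|x|^2)^{s/2}$. By contrast, the Lippmann--Schwinger representation of the scattering amplitude, the algebraic splitting of the integrand, and the passage from $L^2(D)$ bounds to $L^1(D)$ bounds and then to the supremum over $\mathcal M_E$ are routine.
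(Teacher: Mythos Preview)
Your proposal is correct and follows exactly the route the paper indicates: the paper does not prove the lemma directly but simply invokes Lemma~3.2 of \cite{Nov2015en} (with $v_0\equiv 0$), whose proof is precisely the Lippmann--Schwinger representation of the amplitude combined with the Agmon estimate \eqref{in.Agmon} that you sketch. The only nontrivial point, which you rightly flag, is verifying that the specific threshold $E^*$ in \eqref{itp.E*def} yields the required uniform invertibility of $I-\mathcal G^+(k)M_v$ and $I-\mathcal G^+(k)M_{v^*_E}$; everything else---the algebraic splitting and the passage to $L^1(D)$ bounds---is routine, as you say.
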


Note that in this paper we use the notation $\mu_{k,\S X}$, $k \geq 1$, for the constants of Section X. 

Lemma \ref{itp.lemma} follows from Lemma 3.2 of \cite{Nov2015en} for $v_0 \equiv 0$, where $v_0$ is the background potential of \cite{Nov2015en}. The proof of Lemma 3.2 of \cite{Nov2015en} essentially uses estimate \eqref{in.Agmon}.

In particular, due to \eqref{itp.ff*vv*est}, the function 
\begin{equation}\label{itp.U**def}
  U^{**}_E := \Phi\scatam - \Phi \scatamu{*} + \widehat{v^*_E},
\end{equation}
where $\Phi\scatam$, $\Phi\scatamu{*}$ are defined according to \eqref{eq:defPhi}, satisfies the following improved error estimate 
compared to \eqref{itp.v*est}: 
\begin{gather}\label{itp.U**est}
    \|U^{**}_E - \widehat v\|_{L^{\infty}(\mathcal B^d_{2\sqrt E})} 
		\leq \muc{sec.itp}{1} E^{-\alpha - \frac 1 2}, \qquad  E \geq E^*.
\end{gather}
If $v \in W^{n,1}(\mathbb R^d)$, $n > d$ (in addition to the initial assumptions \eqref{in.vprop}), and if
\begin{equation}\label{itp.alphaas}
  \alpha < \tfrac{n}{2d} - \tfrac 1 2,
\end{equation}
then this permits to construct an improved approximation $v^{**}_E$ to the unknown potential $v$ as follows:
\begin{gather}
    \begin{gathered}\label{itp.v**def}
    v^{**}_E(x) := \begin{cases}
		\int_{\mathcal B^d_{r(E)}} e^{-ipx} U^{**}_E(p) \, dp,& x \in D,\\
     0, &x \not \in D,
		\end{cases}
	\qquad	r(E) := 2\tau E^{\frac{1+2\alpha}{2n}}, \quad \tau \in (0,1],
    \end{gathered}
		\end{gather}
		Here $U^{**}_E$ is defined in \eqref{itp.U**def} and $E^* = E^*(K,D)$ is the constant of \eqref{itp.E*def}. It follows that there exists 
		a constant $B>0$ such that 
  \begin{gather}\label{itp.v**est}
  \|v^{**}_E-v\|_{L^{\infty}(D)} \leq B E^{-\beta}, \quad E \geq E^* 
	\mbox{ with }
  \beta := \alpha (1 - \tfrac d n) + \tfrac 1 2 - \tfrac{d}{2n}, 
\end{gather}
Note that 
\[
\alpha < \beta < \tfrac{n}{2d} - \tfrac 1 2
\]
and that condition \eqref{itp.alphaas} implies that $r(E) \leq 2\sqrt E$, so that the definition \eqref{itp.v**def} is correct. 

\section{Iterative inversion from phaseless data}
\subsection{Low order potential reconstruction formulas from phaseless data}\label{sec.pex}
In this subsection we extend the formulas \eqref{in.vborn} and 
\eqref{in.utov} to the phaseless case. 
The key result of \cite{Nov2016} consists in the following formulas for solving Problem \ref{in.probpisc} in dimension $d \geq 2$ for $L = 2$ at high energies $E$:  
\begin{equation}\label{pex.pBornAbs}
    \begin{gathered}
        \sup_{p \in \mathcal B^d_{2\sqrt{E}}}\left| 
				|\widehat{v_{l}}(p)|^2 - |\Phi\scataml{l}(p)|^2\right| 
				= \mathcal{O}(E^{-\frac 1 2}), \quad E \to + \infty,  \quad l = 0,1,2,
    \end{gathered}
\end{equation}
where $v_0 = v$, $v_l$ is defined by \eqref{in.defvj}, $l = 1$, $2$, 
$\scataml{0} = \scatam$, $\scataml{1}$, $\scataml{2}$ are the scattering amplitudes for $v_0$, $v_1$, $v_2$, respectively; in addition, 
\begin{equation}\label{pex.pBorn}
    \begin{pmatrix}
       \Re \widehat v \\
       \Im \widehat v
    \end{pmatrix} = 
    \frac 1 2 \begin{pmatrix} 
                \Re \widehat{w_1} & \Im \widehat{w_1} \\
                \Re \widehat{w_2} & \Im \widehat{w_2}
              \end{pmatrix}^{-1}
    \begin{pmatrix}
        |\widehat{v_1}|^2 - |\widehat v|^2 - |\widehat{w_1}|^2 \\
        |\widehat{v_2}|^2 - |\widehat v|^2 - |\widehat{w_2}|^2
    \end{pmatrix},
\end{equation}
where $\widehat v = \widehat v(p)$, $\widehat{v_l} = \widehat v_l(p)$, $\widehat{w_l} = \widehat w_l(p)$, $p \in \mathbb R^d$, and formula \eqref{pex.pBorn} is considered for all $p$ such that the determinant
\begin{equation}\label{pex.defzeta}
   \zeta_{\widehat{w_1},\widehat{w_2}}(p) := \Re \widehat{w_1}(p) \Im \widehat{w_2}(p) - \Im \widehat{w_1}(p) \Re \widehat{w_2}(p) \neq 0.
\end{equation}
Formulas \eqref{pex.pBornAbs}, \eqref{pex.pBorn} can be considered as a natural extension of formula \eqref{in.vborn} to the phaseless case of Problem \ref{in.probpisc}, $d \geq 2$, $L=2$ and lead to the function 
$\mathrm{Urec}$ defined in Algorithm \ref{alg:Urec} for the approximate 
reconstruction $U_{\widehat{w_1},\widehat{w_2}}$ of $\widehat{v}$:
\begin{equation}\label{eq:Urec}
U_{\widehat{w_1},\widehat{w_2}}(p) := 
\mathrm{Urec}\left(\widehat{w_1}(p),\widehat{w_2}(p),
\Phi|\scatam|^2(p),
\Phi|\scatamul{}{1}|^2(p), \Phi|\scatamul{}{2}|^2(p)\right),\qquad
|p|\leq 2\sqrt{E}
\end{equation}

\begin{algorithm}[ht]
\caption{$\mathrm{function}\; U = \texttt{Urec}(W_1, W_2, F,F_1,F_2)$}
\label{alg:Urec}
{\bf data:} 
Fourier transforms of reference potentials at 
some point $p$: $W_1=\widehat{w}_1(p)$, $W_2=\widehat{w}_2(p)$; \\
scattering amplitude at $(k,l)$, $k-l=p$ without reference potential: $F=|f(k,l)|^2$\\
scattering amplitudes with reference potentials: $F_1=|f_1(k,l)|^2$, $F_2 = |f_2(k,l)|^2$\\
{\bf result:} approximation to the Fourier transform of the unknown potential $v$ at $p$: 
$U\approx \widehat{v}(p)$
\begin{algorithmic}

\State $M:= \left(\begin{array}{cc}
\Re W_1 & \Im W_1 \\ \Re W_2 & \Im W_2 
\end{array}\right)$; $b:= \left(\begin{array}{c}F_1 - F -|W_1|^2\\
F_2-F-|W_2|^2\end{array}\right)$; $\left(\begin{array}{c} x\\y\end{array}\right):=\frac{1}{2}M^{-1}b$; $U:=x+iy$
\end{algorithmic}
\end{algorithm}


On the level of analysis (e.g., error estimates), the principal complication of \eqref{pex.pBornAbs}, \eqref{pex.pBorn} in comparison with \eqref{in.vborn} consists in possible zeros of the determinant $\zeta_{\widehat{w_1},\widehat{w_2}}$ of \eqref{pex.defzeta}. This complication is, in particular, essential if one tries to transform \eqref{pex.pBornAbs}, \eqref{pex.pBorn} into an approximate reconstruction in the configuration space, applying the inverse Fourier transform to $\widehat v = \Re \widehat v + i \Im \widehat v$ of \eqref{pex.pBorn}. For some simplest cases, the results of transforming \eqref{pex.pBornAbs}, \eqref{pex.pBorn} to approximate reconstructions in the configuration space, including error estimates, were given in \cite{Agal2016b}.

{\bf Background potentials of type A:} 
The first simplest case analyzed in \cite{Agal2016b} is 
\begin{align}\label{pex.defw1w2c}
    &w_1(x) := w(x-T_1), \quad  w_2(x) := i w(x-T_1), \quad x \in \mathbb R^d,\\
    &\begin{aligned}\label{pex.defw}
    &\mbox{for some } w \in C(\mathbb R^d) \mbox{ such that } 
		w = \overline w, \; \text{$w(x) = 0$ for $|x|>R$}, \mbox{and}\\
    &\widehat w(p) = \overline{\widehat w(p)} \geq \muc{sec.pex}{1} (1+|p|)^{-\sigma}, \quad p \in \mathbb R^d,
    \end{aligned} 
\end{align}
for some fixed $T_1 \in \mathbb R^d$, $R > 0$, $\muc{sec.pex}{1} > 0$, $\sigma > d$, where $T_1$ and $R$ are chosen in such a way that $w_1$ satisfies \eqref{in.wjass} (and, as a corollary, $w_1$, $w_2$ satisfy \eqref{in.wjass} with $\Omega_2 = \Omega_1$). In addition, a broad class of $w$ satisfying \eqref{pex.defw} was constructed in Lemma 1 of \cite{Agal2016b}. One can see that
\begin{equation}\label{pex.zetac}
    \begin{gathered}
        \zeta_{\widehat{w_1},\widehat{w_2}}(p) = |\widehat w(p)|^2 \geq \pmuc{sec.pex}{1}^2 (1+|p|)^{-2\sigma}, \quad p \in \mathbb R^d, \\
        \text{if $w_1$, $w_2$ are defined by \eqref{pex.defw1w2c}, \eqref{pex.defw}}.
    \end{gathered}
\end{equation}

{\bf Background potentials of type B:} 
The second simplest case analyzed in \cite{Agal2016b} is 
\begin{equation}\label{pex.defw1w2r}
    \begin{gathered}
    w_1(x) = w(x-T_1), \quad w_2(x) = w(x-T_2), \quad x \in \mathbb R^d, \\
    \text{for some fixed $T_1, T_2 \in \mathbb R^d$},
    \end{gathered}
\end{equation}
where $w$ is the same as in \eqref{pex.defw}, and $T_1$, $T_2$, $R$ are chosen in such a way that $w_1$, $w_2$ satisfy \eqref{in.wjass}. One can see that
\begin{equation}\label{pex.zetar}
    \begin{gathered}
        \zeta_{\widehat{w_1},\widehat{w_2}}(p) = \sin(py) |\widehat w(p)|^2, \quad y = T_2 - T_1 \neq 0, \quad p \in \mathbb R^d, \\
        |\zeta_{\widehat{w_1},\widehat{w_2}}(p)| \geq \pmuc{sec.pex}{1}^2 (1+|p|)^{-2\sigma} \tfrac{2\varepsilon}{\pi}, \quad p \in \mathbb R^d \setminus Z^\varepsilon_y,\\
        \text{if $w_1$, $w_2$ are defined by \eqref{pex.defw}, \eqref{pex.defw1w2r},}
    \end{gathered}
\end{equation}
where
\begin{equation}\label{pex.defZe}
   Z^\varepsilon_y = \bigl\{ p \in \mathbb R^d \colon py \in (-\varepsilon,\varepsilon) + \pi \mathbb Z \bigr\}, \quad y \in \mathbb R^d \setminus 0, \;\; 0 < \varepsilon < 1.
\end{equation}

First consider background potentials $w_1$, $w_2$ of type A 
(see \eqref{pex.defw1w2c}, \eqref{pex.defw}) and assume that 
$v$ satisfies \eqref{in.vprop}, $v \in W^{n,1}(\mathbb R^d)$ for some $n>d$.  Then the result of transforming $U_{\widehat{w_1},\widehat{w_2}}$ in 
\eqref{eq:Urec} by 
\begin{equation}\label{pex.defuc}
    \begin{gathered}
    u_E(x) := \int_{\mathcal B^d_{r(E)}} e^{-ipx} U_{\widehat{w_1},\widehat{w_2}}(p,E) \, dp, \quad x \in \mathbb R^d, \\
    r(E) = 2\tau E^\frac{\alpha}{n-d} \quad \text{for some fixed $\tau \in (0,1]$},
    \end{gathered}
\end{equation}
to an approximate reconstruction in the configuration space is as follows 
(see \cite[Theorem 1]{Agal2016b}):
\begin{equation}\label{pex.ucerr}
    \begin{gathered}
    \|u_E- v\|_{L^\infty(\mathbb R^d)} = \mathcal{O}(E^{-\alpha}) \quad \; E \to + \infty\quad \mbox{with }
    \alpha = \frac{\frac{1}{2}(n-d)}{n+\sigma}.
    \end{gathered}
\end{equation}
Now consider background potentials $w_1$, $w_2$ of type B 
(see \eqref{pex.defw1w2r}) and assume again that $v$ satisfies \eqref{in.vprop}, $v \in W^{n,1}(\mathbb R^d)$ for some $n>d$. 
We transform $U_{\widehat{w_1},\widehat{w_2}}$ in \eqref{eq:Urec} 
to an approximation $u_E$ in the configuration space as follows:
\begin{equation}\label{pex.defush}
  \begin{gathered}
   u_E(x) = u_{E,1}(x) + u_{E,2}(x), \quad x \in \mathbb R^d, \\
  u_{E,1}(x) = \int\limits_{ \mathcal B^d_{r(E)} \setminus Z^{\varepsilon(E)}_y} e^{-ipx} U_{\widehat{w_1},\widehat{w_2}}(p,E) \, dp,  \\
  u_{E,2}(x) = \int\limits_{ \mathcal B^d_{r(E)} \cap Z^{\varepsilon(E)}_y} e^{-ipx}  U^{\varepsilon(E)}_{\widehat{w_1},\widehat{w_2}}(p,E) \, dp, \\
  r(E) = 2 \tau E^{\frac{\alpha}{n-d}}, \quad \varepsilon(E)= E^{-\tfrac{\alpha}{2}} \quad \text{for some $\tau \in (0,1]$,}
  \end{gathered}
\end{equation}
$Z^\varepsilon_y$ is defined in \eqref{pex.defZe}, and 
\begin{gather}\label{pex.Uedef}
 U^\varepsilon_{\widehat{w_1},\widehat{w_2}}(p,E) = \tfrac 1 2 \bigl( U_{\widehat{w_1},\widehat{w_2}}(p^\varepsilon_-,E)+U_{\widehat{w_1},\widehat{w_2}}(p^\varepsilon_+,E) \bigr),\\
 \begin{gathered}\label{pex.pbotdef}
  p^\varepsilon_\pm = p_\bot + \pi z(p) \tfrac{y}{|y|^2} \pm \varepsilon \tfrac{y}{|y|^2}, \quad p_\bot = p - (p \cdot y) \tfrac{y}{|y|^2}, \quad p \in \mathcal B^d_{2\sqrt E} \cap Z^\varepsilon_y, \\
  \text{for the unique $z(p) \in \mathbb Z$ such that $|py - \pi z(p)| < \varepsilon$}.
 \end{gathered}
\end{gather}
Then it was shown in  \cite[Theorem 2]{Agal2016b} that
\begin{equation}\label{pex.usherr}
    \begin{gathered}
    \|u_E- v\|_{L^\infty(\mathbb R^d)} = \mathcal{O}(E^{-\alpha}) \quad \; 
		E \to + \infty\quad \mbox{with }
    \alpha = \frac{\frac{1}{2}(n-d)}{n+\sigma+\frac{n-d}{2}},
    \end{gathered}
\end{equation}
The geometry of vectors $p$, $p_\bot$, $y$, $p^\varepsilon_\pm$ is illustrated in Fig.~\ref{fig.defppm} for the case when the direction of $y$ coincides with the basis vector $e_1 = (1,0,\dots,0)$.
\begin{figure}[h]
\begin{center}
\includegraphics[width=.5\linewidth]{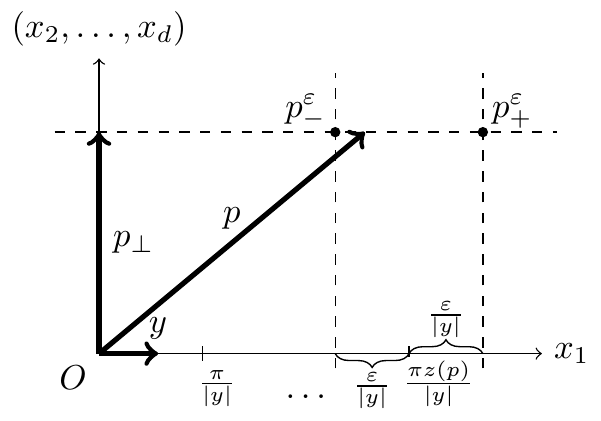}
\end{center}
\caption{Vectors $p$, $p_\bot$, $y$ and $p_\pm^\varepsilon$ of formula \eqref{pex.pbotdef}}
\label{fig.defppm}
\end{figure}

\subsection{Approximate reconstruction of phased scattering data}\label{sec.isp}

We consider Problem \ref{in.probpisc} for $d \geq 2$, $L = 2$, with the unknown potential $v$ satisfying \eqref{in.vprop} and with the background potentials $w_1$, $w_2$ satisfying \eqref{in.wjass}. Let 
\begin{equation}\label{isp.Dextdef}
  D_\text{ext} = D \cup \Omega_1 \cup \Omega_2,
\end{equation}
where $D$, $\Omega_1$, $\Omega_2$ are the domains in \eqref{in.vprop} and \eqref{in.wjass}.

Let $v^*_E$ be an approximation to $v$ satisfying \eqref{eqs:itp.v*} for some $A > 0$, $\alpha \geq 0$, $K > 0$ and for $E^* = E^*(K,D_\text{ext})$, where $E^*(K,D_\text{ext})$ is defined according to \eqref{itp.E*def}. 
In addition,  we suppose that
\begin{equation}\label{isp.v*prop}
  \|v+w_l\|_{L^\infty(\mathbb R^d)} \leq K, \; \|v^*(\cdot,E)+w_l\|_{L^\infty(\mathbb R^d)} \leq K, \; E \geq E^*, \; l = 1,2.
\end{equation}
Using the scattering amplitudes $\scatamu{*}$, $\scatamul{*}{1}$, 
and $\scatamul{*}{2}$ of the known potentials $v^*_E$, 
$v^*_{E,1} := v^*_E+w_1$, and $v^*_{E,2} := v^*_E+w_2$, respectively, 
and the phaseless scattering data $S$ of Problem \ref{in.probpisc}, we construct an approximation $\scatapprox(p)$ to $(\Phi\scatam)(p)$ 
for $|p|\leq 2\sqrt{E}$ by the function in Algorithm \ref{alg:Urec} as follows:
\begin{equation}\label{isp.fappr}
\scatapprox(p) := \mathrm{Urec}\left(\Phi\scatamul{*}{1}(p)-\Phi\scatamu{*}(p),\Phi\scatamul{*}{2}(p)-\Phi\scatamu{*}(p),
\Phi|\scatam|^2(p),
\Phi|\scatamul{}{1}|^2(p), 
\Phi|\scatamul{}{2}|^2(p)\right)
\end{equation}
Note that $\scatapprox(p)$ is well defined if 
\begin{equation}\label{eq:zeta*}
\zeta^*(p,E):= \det \begin{pmatrix}
\Re(\Phi\scatamul{*}{1}(p)-\Phi\scatamu{*}(p)) & 
\Im(\Phi\scatamul{*}{1}(p)-\Phi\scatamu{*}(p)) \\
\Re(\Phi\scatamul{*}{2}(p)-\Phi\scatamu{*}(p)) & 
\Im(\Phi\scatamul{*}{2}(p)-\Phi\scatamu{*}(p))
\end{pmatrix} \neq 0.
\end{equation}
Note that condition \eqref{eq:zeta*} is satisfied for sufficiently large $E$ at fixed $p \in \mathcal B^d_{2\sqrt E}$ if $\zeta_{\widehat w_1,\widehat w_2}(p) \neq 0$, where $\zeta_{\widehat w_1,\widehat w_2}$ is the determinant of formula \eqref{pex.defzeta}.
This follows from the estimate
\begin{equation}\label{isp.zetaest}
  \begin{gathered}
  | \zeta^*(p,E) - \zeta_{\widehat{w_1},\widehat{w_2}}(p) | \leq \muc{sec.isp}{1} E^{-\frac 1 2}, \quad p \in \mathcal B^d_{2\sqrt E}, \; E \geq E^*,
  \end{gathered}
\end{equation}
where $\muc{sec.isp}{1} > 0$ and $E^* = E^*(K,D_\text{ext})$ is defined according to \eqref{itp.E*def}, \eqref{isp.Dextdef}. Estimate \eqref{isp.zetaest} follows from the definition of $M$ in Algorithm \ref{alg:Urec}, 
the formula $M^{-1} = \frac{1}{\zeta}
\left(\begin{smallmatrix} \Im W_2 & - \Im W_1\\ -\Re W_2&\Re W_1\end{smallmatrix}\right)$,  and from the estimates
\begin{equation}\label{isp.Bornest}
  \begin{aligned}
  \bigl| \widehat v(p) - \Phi\scatam(p) \bigr| & \leq \muc{sec.isp}{2} E^{-\frac 1 2}, 
	\quad p \in \mathcal B^d_{2\sqrt{E}},\; E \geq E^*,\\
  \bigl| \widehat v_l(p) - \Phi\scataml{l}(p) \bigr| & \leq \muc{sec.isp}{2} E^{-\frac 1 2}, 
	\quad p \in \mathcal B^d_{2\sqrt{E}},\; E \geq E^*, \; l = 1,2,
  \end{aligned}
\end{equation}
where $\muc{sec.isp}{2} = \muc{sec.isp}{2}(K,D_\text{ext}) > 0$, $E^* = E^*(K,D_\text{ext})$; see, e.g., \cite{Nov2015en}.

\begin{lemma}\label{isp.lemmadisp} Let $v$, satisfying \eqref{in.vprop}, be the unknown potential of Problem \ref{in.probpisc} for $d \geq 2$, $L = 2$. Let $w_1$, $w_2$ be the same as in \eqref{in.wjass}. Let $v^*(\cdot,E)$ be an approximation to $v$, satisfying \eqref{eqs:itp.v*} for some $A > 0$, $\alpha \geq 0$, $K>0$ and for $E^* = E^*(K,D_\text{ext})$ defined according to \eqref{itp.E*def}, \eqref{isp.Dextdef}. Let $p \in \mathcal B^d_{2\sqrt E}$ be such that
\begin{equation}\label{isp.zetageqeps}
  |\zeta_{\widehat{w_1},\widehat{w_2}}(p)| \geq \delta,
\end{equation}
for some fixed $\delta > 0$. Then:
\begin{gather}
  |\zeta^*(p,E)| \geq \tfrac \delta 2, \quad E \geq E^{**}, \label{isp.zetaEgeqeps}\\
  \left|\Phi\scatam(p) - \scatapprox(p) \right| \leq \muc{sec.isp}{3} \delta^{-1} E^{-\alpha-\frac 1 2}, \quad E \geq E^{**},   \label{isp.dispest} \\
  E^{**} = \max \bigl( 4 \pmuc{sec.isp}{1}^2 \delta^{-2}, E^* \bigr),\label{isp.E**def}
\end{gather}
where $\scatapprox$ is defined by \eqref{isp.fappr}, $\zeta^*$ is defined by \eqref{eq:zeta*},
$\muc{sec.isp}{3}$ is defined in \eqref{dis.mu4def} and $\muc{sec.isp}{1}$ is the constant of \eqref{isp.zetaest}.
\end{lemma}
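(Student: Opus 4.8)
The plan is to prove the two assertions \eqref{isp.zetaEgeqeps} and \eqref{isp.dispest} separately; the well-definedness of $\scatapprox(p)$ comes for free from the first. For \eqref{isp.zetaEgeqeps} I would simply combine the already established perturbation estimate \eqref{isp.zetaest}, $|\zeta^*(p,E)-\zeta_{\widehat{w_1},\widehat{w_2}}(p)|\le\muc{sec.isp}{1}E^{-1/2}$, with the hypothesis $|\zeta_{\widehat{w_1},\widehat{w_2}}(p)|\ge\delta$: for $E\ge E^{**}\ge 4\,\pmuc{sec.isp}{1}^{2}\delta^{-2}$ the right-hand side of \eqref{isp.zetaest} is $\le\delta/2$, so $|\zeta^*(p,E)|\ge\delta-\delta/2=\delta/2>0$. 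Since $\zeta^*(p,E)=\det M^*$ for the $2\times2$ matrix $M^*$ of Algorithm~\ref{alg:Urec} built from $W_l^*:=\Phi\scatamul{*}{l}(p)-\Phi\scatamu{*}(p)$, this makes $M^*$ invertible (hence $\scatapprox(p)$ defined), and because $W_l^*=\widehat{w_l}(p)+\mathcal{O}(E^{-1/2})$ by the Born estimates \eqref{isp.Bornest} applied to $v^*_E$ and $v^*_{E,l}:=v^*_E+w_l$, with $|\widehat{w_l}(p)|\le(2\pi)^{-d}|\Omega_l|\,\|w_l\|_{L^\infty}$ and $\|w_l\|_{L^\infty}\le 2K$ by \eqref{itp.vv*leM}, \eqref{isp.v*prop}, the entries of $M^*$ are bounded by a constant depending only on $K$, $D_{\text{ext}}$; consequently $\|(M^*)^{-1}\|\le C_1(K,D_{\text{ext}})\,\delta^{-1}$ for all $E\ge E^{**}$.

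Next I would reformulate the output of $\mathrm{Urec}$ as a linear system. By the definition in Algorithm~\ref{alg:Urec}, $z=\scatapprox(p)$ is the unique complex number with $\Re(\overline{W_l^*}\,z)=\tfrac12\big(\Phi|\scataml{l}|^2(p)-\Phi|\scatam|^2(p)-|W_l^*|^2\big)$ for $l=1,2$. The crux is to show that $\Phi\scatam(p)$ satisfies these same two equations up to a residual $R_l$ with $|R_l|\le\mu\,E^{-\alpha-1/2}$, where $\mu=\mu(A,K,D_{\text{ext}})$. Granting this, subtracting the two systems and using $\det M^*=\zeta^*(p,E)$ gives $M^*\big(\Re(\Phi\scatam(p)-z),\Im(\Phi\scatam(p)-z)\big)^{\top}=(R_1,R_2)^{\top}$, whence $|\Phi\scatam(p)-\scatapprox(p)|\le\|(M^*)^{-1}\|\,|(R_1,R_2)|\le\muc{sec.isp}{3}\,\delta^{-1}E^{-\alpha-1/2}$ with $\muc{sec.isp}{3}:=C_1(K,D_{\text{ext}})\,\mu$ (this is the constant of \eqref{dis.mu4def}), which is \eqref{isp.dispest}.

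The residual bound I would obtain by applying Lemma~\ref{itp.lemma} twice. Applied to the pair $(v,v^*_E)$ it yields, after averaging with $\Phi$, $\Phi\scatam(p)=\Phi\scatamu{*}(p)+\widehat v(p)-\widehat{v^*_E}(p)+\mathcal{O}(E^{-\alpha-1/2})$. Applied to the pair $(v+w_l,\,v^*_E+w_l)$ — admissible since $(v+w_l)-(v^*_E+w_l)=v-v^*_E$ is supported in $D\subset D_{\text{ext}}$ with $\|v-v^*_E\|_{L^\infty}\le A E^{-\alpha}$ by \eqref{itp.v*est}, and \eqref{isp.v*prop} supplies the $L^\infty$ bounds, so that the hypotheses \eqref{eqs:itp.v*} hold with $D$ replaced by $D_{\text{ext}}$ and $E^*=E^*(K,D_{\text{ext}})$ — it yields $\Phi\scataml{l}(p)=\Phi\scatamul{*}{l}(p)+\widehat{v_l}(p)-\widehat{v^*_{E,l}}(p)+\mathcal{O}(E^{-\alpha-1/2})$. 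Since $\widehat{v_l}-\widehat{v^*_{E,l}}=\widehat v-\widehat{v^*_E}$ (the $\widehat{w_l}$ cancels), subtracting gives $\Phi\scataml{l}(p)=\Phi\scatam(p)+W_l^*+\mathcal{O}(E^{-\alpha-1/2})$. Combining this with the elementary identity $|a+b|^2-|a|^2-|b|^2=2\Re(\overline b\,a)$ taken with $a=\Phi\scatam(p)$, $b=W_l^*$, with $|\Phi\scataml{l}(p)|^2=|\Phi\scatam(p)+W_l^*|^2+\mathcal{O}(E^{-\alpha-1/2})$ (from the previous display and the uniform bounds of the first paragraph), and with $\Phi|\scatam|^2(p)=|\Phi\scatam(p)|^2$, $\Phi|\scataml{l}|^2(p)=|\Phi\scataml{l}(p)|^2$ (which is how the quadratic data enters $\mathrm{Urec}$, $\tilde\Phi$ being taken bijective on $\mathcal{M}^{\mathrm{m}}_E$ as in \eqref{in.defGE}), one computes $\tfrac12\big(\Phi|\scataml{l}|^2(p)-\Phi|\scatam|^2(p)-|W_l^*|^2\big)=\Re(\overline{W_l^*}\,\Phi\scatam(p))+\mathcal{O}(E^{-\alpha-1/2})$, i.e.\ $|R_l|\le\mu E^{-\alpha-1/2}$, with $\mu$ collecting the constant of Lemma~\ref{itp.lemma}, the Born constant $\muc{sec.isp}{2}$ of \eqref{isp.Bornest}, and the a priori bounds on $\|v\|_{L^\infty}$ and $\|w_l\|_{L^\infty}$; the admissible range is $E\ge E^{**}$ with $E^{**}$ as in \eqref{isp.E**def}.

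I expect this last computation to be the main obstacle: one must track how the $\mathcal{O}(E^{-1/2})$ Born errors carried by $W_l^*$ and by the quadratic data $\Phi|\scataml{l}|^2$ enter, and verify that they cancel between the two channels "with $w_l$" and "without $w_l$", leaving only the smaller $\mathcal{O}(E^{-\alpha-1/2})$ contribution supplied by Lemma~\ref{itp.lemma}. This cancellation is exactly what the algebraic structure behind formula \eqref{pex.pBorn} — equivalently, the fact that $\mathrm{Urec}$ inverts the relation $\Phi|\scataml{l}|^2=\Phi|\scatam|^2+|W_l|^2+2\Re(\overline{W_l}z)$ — is designed to provide; without it one would only get the useless bound $\mathcal{O}(\delta^{-1}E^{-1/2})$.
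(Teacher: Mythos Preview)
Your proposal is correct and follows essentially the same route as the paper's proof: both obtain \eqref{isp.zetaEgeqeps} directly from \eqref{isp.zetaest} and the definition of $E^{**}$; both apply Lemma~\ref{itp.lemma} to the pairs $(v,v^*_E)$ and $(v+w_l,v^*_E+w_l)$ to get $\Phi\scataml{l}-\Phi\scatam-W_l^*=\mathcal{O}(E^{-\alpha-1/2})$, pass to squared moduli (using the uniform a~priori bounds on the amplitudes), and read the result as a $2\times2$ linear system $M^*\,(\Re\Phi\scatam,\Im\Phi\scatam)^\top=b/2+\mathcal{O}(E^{-\alpha-1/2})$ which is inverted with $\|(M^*)^{-1}\|\lesssim\delta^{-1}$. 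The paper writes the inverse bound via the explicit Frobenius identity $\|M^{-1}\|_F=|\det M|^{-1}\|M\|_F$, whereas you bound entries directly; and you make explicit the identification $\Phi|\scatam|^2=|\Phi\scatam|^2$ via bijectivity of $\tilde\Phi$, which the paper's proof uses tacitly when matching the $b$-vector of Algorithm~\ref{alg:Urec} with the expression in its matrix inequality---but these are cosmetic differences, not a different approach.
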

Lemma \ref{isp.lemmadisp} is proved in Section \ref{sec.dis}.

The point is that the right-hand side of the estimate in \eqref{isp.dispest} decays faster than the right-hand side of the estimate in \eqref{itp.v*est} as $E \to + \infty$. This is a crucial advantage of $\scatapprox$ as an approximation to the unknown phased scattering data $\Phi\scatam$ in comparison with $\Phi\scatamu{*}$.

Using Lemma \ref{isp.lemmadisp} we construct the iterative step for phaseless inverse scattering, see Sections \ref{sec.itc} and \ref{sec.itr}.

\subsection{Iterations for background potentials of type A}\label{sec.itc}
In this subsection we consider background potentials $w_1$, $w_2$ satisfying 
\eqref{pex.defw1w2c} and \eqref{pex.defw}. 

\paragraph*{Iterative step.}

We consider Problem \ref{in.probpisc} for $d \geq 2$, $L = 2$, with the unknown potential $v$ satisfying \eqref{in.vprop} and with the background potentials $w_1$, $w_2$ satisfying \eqref{in.wjass}, \eqref{pex.defw1w2c}, \eqref{pex.defw}.

Let $v^*_E$ be an approximation to $v$ satisfying 
\eqref{eqs:itp.v*} for some $A > 0$, $\alpha \geq 0$, $K>0$ and $E^* = E^*(K,D_\text{ext})$, where $E^*(K,D_\text{ext})$ is defined according to \eqref{itp.E*def}, \eqref{isp.Dextdef} (with $\Omega_1 = \Omega_2$). 

We construct an improved approximation $v^{**}_E$ to the unknown potential 
$v$ via the scheme of Section \ref{sec.itp} with $\Phi\scatam$ replaced by 
$\scatapprox$ of formula \eqref{isp.fappr} of Section \ref{sec.isp}. Put
\begin{equation}\label{itc.U**def}
  \begin{gathered}
    U^{**}_E(p) = \scatapprox(p) - \Phi\scatamu{*}(p) + \widehat v^*_E(p), \\
    p \in \mathcal B^d_{2 \sqrt E}, \; \zeta^*(p,E) \neq 0,
    \end{gathered}
\end{equation}
where $\scatamu{*}$ is the scattering amplitude of $v^*_E$, and 
$\zeta^*$ is defined in \eqref{eq:zeta*}.

Under assumptions \eqref{pex.defw1w2c}, \eqref{pex.defw}, the iterative step for phaseless inverse scattering is realized as follows.

\begin{theorem}\label{itc.thmc} Let $v$ satisfy \eqref{in.vprop} and $v \in W^{n,1}(\mathbb R^d)$ for some $n>d$. Let $w_1$, $w_2$ be the same as in \eqref{in.wjass}, \eqref{pex.defw1w2c}, \eqref{pex.defw}. Let $v^*_E$ be an approximation to $v$ satisfying \eqref{eqs:itp.v*} for some $A>0$, $\alpha \geq 0$, $K > 0$ and for $E^* = E^*(K,D_\text{ext})$, where $E^*(K,D_\text{ext})$ is defined according to \eqref{itp.E*def}, \eqref{isp.Dextdef} (with $\Omega_1 = \Omega_2$). We suppose also that
\begin{equation}
  \alpha < \tfrac{n-d}{2(d+2\sigma)}, \label{itc.alphaas}
\end{equation}
where $\sigma$ is the constant of \eqref{pex.defw}. Let
\begin{equation}
  \begin{gathered}\label{itc.beta1def}
  v^{**}_E(x) := \begin{cases}
	\int_{\mathcal B^d_{r(E)}} e^{-ipx} U^{**}_E(p) \, dp,& x \in D,\\
    0, & x \not \in D,
		\end{cases}
		\qquad \mbox{where }
    r(E) := 2\tau E^{\frac{\beta}{n-d}} \mbox{ and } 
		\beta := \tfrac{(\frac 1 2 +\alpha)(n-d)}{n+2\sigma}
    \end{gathered}
\end{equation}
for some $\tau \in (0,1]$. 
Here $U^{**}_E$ is defined in \eqref{itc.U**def}, 
and $\mathcal B^d_r$ is defined by \eqref{pex.defBr}. 

Then  there exist constants $B_1 = B_1(\tau,\|v\|_{n,1},A,K,D_\text{ext},d,\sigma,n,\muc{sec.pex}{1})$ and 
$E_1 = E_1(\tau,\alpha,A,K,D,d,\sigma,n,\muc{sec.pex}{1})$ 
defined in \eqref{tcp.v**est} such that 
   \begin{equation}
 \begin{gathered}
    \|v^{**}_E-v\|_{L^{\infty}(D)} \leq B_1 E^{-\beta} \qquad 
		\mbox{for all }E \geq E_1.
 \end{gathered}
\end{equation}
\end{theorem}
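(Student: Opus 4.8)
The strategy is to reduce Theorem~\ref{itc.thmc} to the phased iterative step of Section~\ref{sec.itp}, using Lemma~\ref{isp.lemmadisp} to control the error introduced by replacing the true phased data $\Phi\scatam$ by its phaseless surrogate $\scatapprox$. First I would split the Fourier domain $\mathcal B^d_{r(E)}$ according to the lower bound \eqref{pex.zetac} for $\zeta_{\widehat{w_1},\widehat{w_2}}$ in the type-A case: since $\zeta_{\widehat{w_1},\widehat{w_2}}(p) = |\widehat w(p)|^2 \geq \pmuc{sec.pex}{1}^2(1+|p|)^{-2\sigma}$ everywhere, I can take $\delta = \delta(E) := \pmuc{sec.pex}{1}^2(1+r(E))^{-2\sigma}$ and apply Lemma~\ref{isp.lemmadisp} uniformly for all $p \in \mathcal B^d_{r(E)}$. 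This gives, via \eqref{isp.dispest},
\begin{equation*}
\bigl|\Phi\scatam(p) - \scatapprox(p)\bigr| \leq \muc{sec.isp}{3}\,\delta(E)^{-1} E^{-\alpha-\frac12} = \mathcal{O}\bigl((1+r(E))^{2\sigma} E^{-\alpha-\frac12}\bigr), \qquad p \in \mathcal B^d_{r(E)},
\end{equation*}
once $E$ is large enough that $E^{**}$ in \eqref{isp.E**def} is dominated by a suitable $E_1$; note $E^{**}$ itself grows like $\delta(E)^{-2} \sim r(E)^{4\sigma}$, which is polynomial in $E$ and hence harmless for $E$ large.

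Next I would combine this with Lemma~\ref{itp.lemma}, which controls the phased quantity $\Phi\scatam - \Phi\scatamu{*} + \widehat{v^*_E} - \widehat v$ by $\muc{sec.itp}{1} E^{-\alpha-\frac12}$ on all of $\mathcal M_E$, hence on $\mathcal B^d_{2\sqrt E} \supset \mathcal B^d_{r(E)}$ after applying $\Phi$ (which does not increase the sup-norm). Writing $U^{**}_E - \widehat v = (\scatapprox - \Phi\scatam) + (\Phi\scatam - \Phi\scatamu{*} + \widehat{v^*_E} - \widehat v)$ and using the triangle inequality yields the key uniform estimate
\begin{equation*}
\|U^{**}_E - \widehat v\|_{L^\infty(\mathcal B^d_{r(E)})} \leq C\,(1+r(E))^{2\sigma} E^{-\alpha-\frac12}, \qquad E \geq E_1,
\end{equation*}
for some constant $C$ depending on the quantities listed in the theorem. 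This plays the role of \eqref{itp.U**est} but with the extra polynomial factor $(1+r(E))^{2\sigma}$ coming from the determinant zeros — this is exactly the complication flagged in Section~\ref{sec.pex}, and it is the reason the exponent $\beta$ in \eqref{itc.beta1def} is smaller than the $\beta$ of \eqref{itp.v**est}.

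Finally I would estimate $\|v^{**}_E - v\|_{L^\infty(D)}$ by the standard split into a truncation error and a data error:
\begin{equation*}
\|v^{**}_E - v\|_{L^\infty(D)} \leq \underbrace{\Bigl\|\int_{\mathbb R^d \setminus \mathcal B^d_{r(E)}} e^{-ipx}\widehat v(p)\,dp\Bigr\|_{L^\infty(D)}}_{\text{truncation}} + \underbrace{\Bigl\|\int_{\mathcal B^d_{r(E)}} e^{-ipx}(U^{**}_E(p) - \widehat v(p))\,dp\Bigr\|_{L^\infty(D)}}_{\text{data error}}.
\end{equation*}
For the truncation term, $v \in W^{n,1}$ gives $|\widehat v(p)| \lesssim \|v\|_{n,1}(1+|p|)^{-n}$, so the tail integral is $\mathcal{O}(r(E)^{d-n})$ since $n > d$. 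For the data error, bound the integrand by the $L^\infty$ estimate above and multiply by the volume $\mathcal{O}(r(E)^d)$, obtaining $\mathcal{O}(r(E)^{d+2\sigma} E^{-\alpha-\frac12})$. With $r(E) = 2\tau E^{\beta/(n-d)}$ both terms should balance to $\mathcal{O}(E^{-\beta})$: the truncation term is $\mathcal{O}(E^{-\beta})$ by the choice $r(E)^{n-d} \sim E^{\beta}$, and the data term is $\mathcal{O}(E^{(d+2\sigma)\beta/(n-d) - \alpha - 1/2})$, which equals $\mathcal{O}(E^{-\beta})$ precisely when $\beta\bigl(1 + \tfrac{d+2\sigma}{n-d}\bigr) = \alpha + \tfrac12$, i.e. $\beta = \tfrac{(\frac12+\alpha)(n-d)}{n+2\sigma}$ — matching \eqref{itc.beta1def}. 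The hypothesis \eqref{itc.alphaas} is what guarantees $\beta$ is small enough that $r(E) \leq 2\sqrt E$, so that $U^{**}_E$ is evaluated only where it is defined and the whole construction is legitimate; I would verify this inequality $\frac{\beta}{n-d} \leq \frac12$ directly from \eqref{itc.alphaas}. The main obstacle is bookkeeping the constants and the threshold $E_1$ so that all of Lemma~\ref{itp.lemma}, Lemma~\ref{isp.lemmadisp} (with the $E$-dependent $\delta$), and the requirement $r(E) \leq 2\sqrt E$ hold simultaneously; the analytic content is entirely in the two cited lemmas plus the elementary Fourier truncation estimate.
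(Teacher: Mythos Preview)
Your proposal is correct and follows essentially the same route as the paper's proof: the paper packages your Fourier-side estimate $|U^{**}_E(p)-\widehat v(p)|\lesssim r^{2\sigma}E^{-\alpha-\frac12}$ (obtained by combining Lemma~\ref{isp.lemmadisp} with the $r$-dependent $\delta$ and Lemma~\ref{itp.lemma}) into a separate Proposition, then performs exactly the truncation/data-error split you describe and balances via the choice $r(E)=2\tau E^{\beta/(n-d)}$. The only point worth tightening in your write-up is the remark that $E^{**}\sim r(E)^{4\sigma}$ is ``harmless for $E$ large'': you should check explicitly that $4\sigma\beta/(n-d)<1$, which indeed follows from \eqref{itc.alphaas} (via $\beta<\tfrac{n-d}{2(d+2\sigma)}<\tfrac{n-d}{4\sigma}$), so that the threshold $E_1$ is finite.
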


\begin{remark} Under assumptions of Theorem \ref{itc.thmc}, $v^{**}_E(x)$ is well-defined for $E \geq E_1$, i.e.:
  \begin{equation}
	\begin{gathered}
	  \zeta^*(p,E) \neq 0 \quad \text{for $p \in \mathcal B^d_{r(E)}$, $E \geq E_1$},\\
	  r(E) \leq 2 \sqrt E \quad \text{for $E \geq E_1$}.
	\end{gathered}
 \end{equation}
\end{remark}

\begin{remark} The following three conditions are equivalent:
\begin{equation}\label{itc.alphaas2}
	\alpha < \tfrac{n-d}{2(d+2\sigma)}, \quad \beta < \tfrac{n-d}{2(d+2\sigma)}, \quad \alpha < \beta,
\end{equation}
where $\beta$ is defined in \eqref{itc.beta1def}, $n>d$, $\alpha>0$, and $\sigma>d$.
\end{remark}

Theorem \ref{itc.thmc} is proved in Section \ref{sec.tcp}.

Theorem \ref{itc.thmc} of the present work can be considered as an extension of Theorem~1 of \cite{Agal2016b} to the case when $v^* \neq 0$. However, Theorem \ref{itc.thmc} of the present work for $v^* = 0$ does not coincide with Theorem 1 of \cite{Agal2016b}.

\paragraph*{Iterations}
Let $v$, $w_1$, $w_2$ satisfy the assumptions of Theorem \ref{itc.thmc}. Let $u^{(1)}_E = v^{**}_E$ for $v^* = 0$. Note that $u^{(1)}_E$ is similar but does not coincide with the approximate reconstruction of Theorem 1 of \cite{Agal2016b}; see formulas \eqref{pex.ucerr}, \eqref{pex.defuc} of the present work. In particular, we have that
\begin{equation}
  \begin{gathered}
  \|u^{(1)}_E - v\|_{L^{\infty}(D)} =  \mathcal{O}(E^{-\alpha_1}), \quad 
	E \to + \infty \mbox{ with }
  \alpha_1 := \tfrac 1 2 \tfrac{n-d}{n+2\sigma}.
  \end{gathered}
\end{equation}
Then, applying the iterative step described above in this subsection  we construct nonlinear approximate reconstructions $u^{(j)}_E$, $j \geq 2$, such that
\begin{equation}\label{itc.ujconv}
 \begin{gathered}
	\|u^{(j)}_E - v\|_{L^\infty(D)} = \mathcal{O}(E^{-\alpha_j}), 
	\qquad E \to + \infty \mbox{ with }
	\alpha_j := \tfrac 1 2 \tfrac{n-d}{d+2\sigma} \bigl(1 - \bigl(\tfrac{n-d}{n+2\sigma}\bigr)^j \bigr).
 \end{gathered}
\end{equation}
The approximations $u^{(j)}_E$ of \eqref{itc.ujconv} for phaseless inverse scattering under assumptions \eqref{pex.defw1w2c}, \eqref{pex.defw} are analogs of approximations $u^{(j)}_E$ of \eqref{in.ujtov} for  phased inverse scattering. In a similar way with \eqref{in.alphalim}, 
\begin{equation}\label{itc.alphalim}
 \begin{array}{ll}
  \alpha_j \to \alpha_\infty = \tfrac 1 2 \tfrac{n-d}{d+2\sigma} & \text{as $j\to+\infty$},\\
  \alpha_j \to \tfrac j 2 & \text{as $n \to + \infty$},\\
  \alpha_\infty \to + \infty & \text{as $n \to + \infty$},
  \end{array}
\end{equation}
so that the convergence in \eqref{itc.ujconv} as $E \to + \infty$ is much more optimal then the convergence in \eqref{pex.ucerr}, at least, for large $j$, $n$.

\subsection{Iterations for background potentials of type B}\label{sec.itr}
In this subsection 
we consider Problem \ref{in.probpisc} for $d \geq 2$ with shifted background potentials $w_1$, $w_2$ 
described by \eqref{pex.defw}, and \eqref{pex.defw1w2r} and unknown potential $v$ satisfying 
\eqref{in.vprop}. 

\paragraph*{Iterative step.}
We consider the set $Z^\varepsilon_y$ of formula \eqref{pex.defZe}. Put \begin{equation}\label{itr.defpzt}
  \begin{gathered}
  p^\varepsilon(p_\bot,z,t) = p_\bot + \pi z \tfrac{y}{|y|^2} + t \varepsilon \tfrac{y}{|y|^2}, \\
  p_\bot \in \mathbb R^d, \; p_\bot \cdot y = 0, \quad z \in \mathbb Z, \quad t \in \mathbb R.
  \end{gathered}
\end{equation}
Note that
\begin{equation}
  \begin{gathered}
  \text{for any $p \in Z^\varepsilon_y$ there exists the unique triple $(p_\bot,z,t)$ such that}\\
  p^\varepsilon(p_\bot,z,t)  = p, \; p_\bot \in \mathbb R^d, \; p_\bot \cdot y = 0, \; z \in \mathbb Z, \; t \in (-1,1).
  \end{gathered}
\end{equation}
In addition to $U^{**}$ of \eqref{itc.U**def}, we also define
\begin{equation}\label{itr.defU**e}
    U^{**}_{N,\varepsilon}(p^\varepsilon(p_\bot,z,t),E) = \sum_{-N \leq j \leq N, j \neq 0} U^{**}\bigl(p^\varepsilon(p_\bot,z,j\bigr),E) L_{j}(t),
\end{equation}
under the assumptions that
\begin{equation}\label{itr.U**eass}
    \varepsilon < \tfrac{\pi}{N+1}, \; |p^\varepsilon(p_\bot,z,t)| \leq 2 \sqrt E - \tfrac{\pi}{|y|}, \; t \in (-1,1), \; \varepsilon > 0, \; N \geq 1,
\end{equation}
where
\begin{gather}
  L_j(t) = \frac{(-1)^{N-j} j (t+j)}{(N-j)!(N+j)!} \prod_{1\leq i \leq N, i \neq j} (t^2 - i^2), \quad j = \pm 1, \dots, \pm N. \label{itr.deflj}
\end{gather}
Note that for fixed $p_\bot \in \mathbb R^d$, $p_\bot \cdot y = 0$, and for fixed $z \in \mathbb Z$, function $U^{**}_{N,\varepsilon}(p^\varepsilon(p_\bot,z,t),E)$ is the Lagrange interpolating polynomial of degree $2N-1$ in $t \in \mathbb R$ for $U^{**}(p^\varepsilon(p_\bot,z,t),E)$ with the nodes at $t = \pm 1$, \dots, $\pm N$. In addition, $L_j(t)$ is the $j$-th elementary Lagrange interpolating polynomial of degree $2N-1$:
\begin{equation}
 L_j(t) = \begin{cases} 1, & t = j, \\ 0, & t = \pm 1, \dots, \pm N, \; t \neq j. \end{cases}
\end{equation}

Note also that if assumptions \eqref{itr.U**eass} are valid for some $p_\bot \in \mathbb R^d$, $p_\bot \cdot y = 0$, $z \in \mathbb Z$, $t \in (-1,1)$, then
\begin{equation}\label{itr.pjinset}
	p^\varepsilon(p_\bot,z,s) \in \mathcal B^d_{2\sqrt E} \setminus Z^\varepsilon_y \quad
    \text{for all $s \in [-N,-1] \cup [1,N]$}.
\end{equation}

Under assumptions \eqref{pex.defw},  \eqref{pex.defw1w2r}, the iterative step is realized as follows.

\begin{theorem}\label{itr.thmr} Let $v$ satisfy \eqref{in.vprop} and $v \in W^{n,1}(\mathbb R^d)$ for some $n>d$. Let $w_1$, $w_2$ be the same as in \eqref{in.wjass}, \eqref{pex.defw}, \eqref{pex.defw1w2r}. Let  $v^*_E$ be an approximation to $v$ satisfying \eqref{itp.v*prop}, \eqref{eqs:itp.v*} for some $A > 0$, $\alpha \geq 0$, $K > 0$ and for $E^* = E^*(K,D_\text{ext})$, where $E^*(K,D_\text{ext})$ is defined according to \eqref{itp.E*def}, \eqref{isp.Dextdef} (with $\Omega_1 = \Omega_2$). We suppose also that
\begin{equation}
  \alpha < \tfrac{n-d}{2(d+2\sigma + \frac{n-d}{2N+1})} \quad \text{for some $N \geq 1$}, \label{itr.alphaas}
\end{equation}
where $\sigma$ is the constant of \eqref{pex.defw}. Let
\begin{equation}\label{itr.beta2def}
  \begin{aligned}
& v^{**}_E(x) := \begin{cases}  \int\limits_{ \mathcal B^d_{r(E)} \setminus Z^{\varepsilon(E)}_y} e^{-ipx} U^{**}_E(p) \, dp +,\int\limits_{ \mathcal B^d_{r(E)} \cap Z^{\varepsilon(E)}_y} e^{-ipx}  U^{**}_{N,\varepsilon(E)}(p,E) \, dp &x\in D  \\
	0, &x\notin D \end{cases}\\
& \mbox{with }  r(E) = 2\tau E^{\frac{\beta}{n-d}}, \quad \varepsilon(E) = E^{-\frac{\beta}{2N+1}}, \quad 
   \beta = {\tfrac{(\frac 1 2 +\alpha)(n-d)}{n+2\sigma+\tfrac{n-d}{2N+1}}}, \quad \text{for some $\tau \in (0,1]$},
  \end{aligned}
\end{equation}
where $U^{**}_E$ is defined by formulas \eqref{itc.U**def}; 
$U^{**}_{N,\varepsilon}(p,E)$ is defined by \eqref{itr.defU**e}; $\mathcal B^d_r$, $Z^\varepsilon_y$ are defined by \eqref{pex.defBr}, \eqref{pex.defZe}. 

Then the following estimate holds:
\begin{equation}\label{itr.v**err}
  \begin{gathered}
    \|v^{**}_E-v\|_{L^\infty(D)} \leq B_2 E^{-\beta},  \quad\; E \geq E_2,
  \end{gathered}
\end{equation}
  where 
  $B_2 = B_2(\tau,y,N,\|v\|_{n,1},A,K,D_\text{ext},d,\sigma,n,\muc{sec.pex}{1})$ and
  $E_2 = E_2(\tau,y,\alpha,N,A,K,D,d,\sigma,n,\muc{sec.pex}{1})$ 
are defined in \eqref{trp.B2E2def}.

\end{theorem}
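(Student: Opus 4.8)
The plan is to follow the same pattern as the proof of Theorem~\ref{itc.thmc}, the only new ingredient being the treatment of the ``forbidden'' slabs $Z^{\varepsilon(E)}_y$ on which the determinant $\zeta_{\widehat w_1,\widehat w_2}$ of \eqref{pex.zetar} vanishes and on which $U^{**}_E$ is replaced by the Lagrange interpolant $U^{**}_{N,\varepsilon(E)}$ of \eqref{itr.defU**e}. Everything is reduced to (i)~a pointwise bound for $U^{**}_E-\widehat v$ on $\mathcal B^d_{r(E)}\setminus Z^{\varepsilon(E)}_y$, (ii)~a pointwise bound for $U^{**}_{N,\varepsilon(E)}-\widehat v$ on $\mathcal B^d_{r(E)}\cap Z^{\varepsilon(E)}_y$, and (iii)~the Fourier-inversion inequality
\[
\|v^{**}_E-v\|_{L^\infty(D)}\le\int_{\mathcal B^d_{r(E)}\setminus Z^{\varepsilon(E)}_y}\!\!|U^{**}_E-\widehat v|\,dp+\int_{\mathcal B^d_{r(E)}\cap Z^{\varepsilon(E)}_y}\!\!|U^{**}_{N,\varepsilon(E)}-\widehat v|\,dp+\int_{\mathbb R^d\setminus\mathcal B^d_{r(E)}}\!\!|\widehat v|\,dp,
\]
which is legitimate because $v\in W^{n,1}(\mathbb R^d)$ with $n>d$ gives $\widehat v\in L^1(\mathbb R^d)$ with $|\widehat v(p)|\le c_{n,d}\|v\|_{n,1}(1+|p|)^{-n}$, so that $v(x)=\int e^{-ipx}\widehat v(p)\,dp$ while $v^{**}_E$ is, on $D$, the inverse Fourier transform of the corresponding integrand truncated to $\mathcal B^d_{r(E)}$. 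In particular the last (tail) term is already $\le C\|v\|_{n,1}(2\tau)^{d-n}E^{-\beta}$ by the choice $r(E)=2\tau E^{\beta/(n-d)}$.

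For (i), if $p\in\mathcal B^d_{r(E)}\setminus Z^{\varepsilon(E)}_y$ then \eqref{pex.zetar} gives $|\zeta_{\widehat w_1,\widehat w_2}(p)|\ge\delta(p):=\tfrac{2\varepsilon(E)}{\pi}\,\pmuc{sec.pex}{1}^2(1+|p|)^{-2\sigma}$, so Lemma~\ref{isp.lemmadisp}, applicable as soon as $E\ge E^{**}(\delta(p))=\max\bigl(4\pmuc{sec.isp}{1}^2\delta(p)^{-2},E^*\bigr)$, bounds $|\scatapprox(p)-\Phi\scatam(p)|$ by $\muc{sec.isp}{3}\,\delta(p)^{-1}E^{-\alpha-1/2}$. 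Since $U^{**}_E$ of \eqref{itc.U**def} differs from $\Phi\scatam-\Phi\scatamu{*}+\widehat{v^*_E}$ only by having $\scatapprox$ in place of $\Phi\scatam$, the triangle inequality together with \eqref{itp.U**est} gives $|U^{**}_E(p)-\widehat v(p)|\le C_1(1+|p|)^{2\sigma}\varepsilon(E)^{-1}E^{-\alpha-1/2}$. Integrating over $\mathcal B^d_{r(E)}$ and using $\int_{\mathcal B^d_{r(E)}}(1+|p|)^{2\sigma}\,dp\le C r(E)^{d+2\sigma}$, the first integral in (iii) is $\le C r(E)^{d+2\sigma}\varepsilon(E)^{-1}E^{-\alpha-1/2}$, whose exponent of $E$ is $\tfrac{(d+2\sigma)\beta}{n-d}+\tfrac{\beta}{2N+1}-\alpha-\tfrac12$; this equals $-\beta$ exactly, because the definition of $\beta$ in \eqref{itr.beta2def} is equivalent to $\tfrac12+\alpha=\beta\bigl(\tfrac{n+2\sigma}{n-d}+\tfrac1{2N+1}\bigr)$.

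For (ii), fix $p=p^{\varepsilon(E)}(p_\bot,z,t)$ with $t\in(-1,1)$. By \eqref{itr.pjinset} the interpolation nodes $p_j=p^{\varepsilon(E)}(p_\bot,z,j)$, $1\le|j|\le N$, lie in $\mathcal B^d_{2\sqrt E}\setminus Z^{\varepsilon(E)}_y$ and satisfy $|p_j|\le|p|+N\varepsilon(E)/|y|$, so the bound from (i) applies to each $U^{**}_E(p_j)$. Writing $g(s):=\widehat v(p^{\varepsilon(E)}(p_\bot,z,s))$ — an entire function of $s$, since $\supp v$ is compact — one splits
\[
U^{**}_{N,\varepsilon(E)}(p,E)-\widehat v(p)=\sum_{1\le|j|\le N}L_j(t)\bigl(U^{**}_E(p_j)-g(j)\bigr)+\Bigl(\sum_{1\le|j|\le N}g(j)L_j(t)-g(t)\Bigr).
\]
Since $\sup_{(-1,1)}|L_j|\le C(N)$, the first sum is $\le C(N)(1+|p|)^{2\sigma}\varepsilon(E)^{-1}E^{-\alpha-1/2}$, and integrating it over $\mathcal B^d_{r(E)}\cap Z^{\varepsilon(E)}_y$, whose measure is $\le C(d)\,r(E)^d\varepsilon(E)$, contributes only $\le C r(E)^{d+2\sigma}E^{-\alpha-1/2}=\mathcal O(E^{-\beta-\beta/(2N+1)})$. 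The second bracket is the Lagrange remainder: by the classical formula its modulus is $\le\tfrac{(N!)^2}{(2N)!}\|g^{(2N)}\|_{L^\infty(-N,N)}$, and $g^{(2N)}=\varepsilon(E)^{2N}|y|^{-2N}\,\bigl((\tfrac{y}{|y|}\cdot\nabla_p)^{2N}\widehat v\bigr)\circ p^{\varepsilon(E)}(p_\bot,z,\cdot)$; since $(\tfrac{y}{|y|}\cdot\nabla_p)^{2N}\widehat v=\pm\,\widehat{(\tfrac{y}{|y|}\cdot x)^{2N}v}$ is again the Fourier transform of a $W^{n,1}$ function of norm $\le C(N,D_\text{ext},d)\|v\|_{n,1}$, the second bracket is $\le C|y|^{-2N}\|v\|_{n,1}\varepsilon(E)^{2N}(1+|p|)^{-n}$ (the shift of size $\le N\varepsilon(E)/|y|$ being harmless for $E$ large). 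The decisive point is that the contribution of this remainder to the middle integral in (iii) must \emph{not} be estimated by its $L^\infty$ norm times $|\mathcal B^d_{r(E)}\cap Z^{\varepsilon(E)}_y|$ — that would cost a spurious factor $r(E)^d$ — but instead via $\int_{Z^{\varepsilon(E)}_y}(1+|p|)^{-n}\,dp\le C(n,d)\,\varepsilon(E)$, valid for $n>d$ because $Z^\varepsilon_y$ is a union of slabs of asymptotic density $\tfrac{2\varepsilon}{\pi}$ and $(1+|p|)^{-n}\in L^1(\mathbb R^d)$; this yields a contribution $\le C|y|^{-2N}\|v\|_{n,1}\,\varepsilon(E)^{2N+1}=C|y|^{-2N}\|v\|_{n,1}E^{-\beta}$ by the choice $\varepsilon(E)=E^{-\beta/(2N+1)}$. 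Summing the three pieces gives \eqref{itr.v**err}; the threshold $E_2$ is taken large enough that, in addition, $\varepsilon(E)<\tfrac{\pi}{N+1}$, $r(E)+N\varepsilon(E)/|y|\le 2\sqrt E-\tfrac{\pi}{|y|}$ (which makes \eqref{itr.U**eass}, the well-definedness of $v^{**}_E$, and $r(E)\le 2\sqrt E$ hold), and $E\ge E^{**}(\delta(p))$ uniformly for $|p|\le r(E)+N\varepsilon(E)/|y|$; this last requirement can be met precisely because assumption \eqref{itr.alphaas}, equivalently $\beta<\tfrac{n-d}{2(d+2\sigma+\frac{n-d}{2N+1})}$, forces $\tfrac{4\sigma\beta}{n-d}+\tfrac{2\beta}{2N+1}<1$, i.e.\ $E^{**}(\delta(p))=o(E)$.

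The hard part will be the slab-remainder estimate in step (ii): one has to gain the full power $\varepsilon(E)^{2N+1}$ rather than $\varepsilon(E)^{2N}$, and this forces one to exploit the decay $(1+|p|)^{-n}$ carried by the $2N$-th order $p$-derivatives of $\widehat v$ — the second place where the hypothesis $v\in W^{n,1}$ is used — together with $\int_{Z^\varepsilon_y}(1+|p|)^{-n}\,dp=\mathcal O(\varepsilon)$. This is exactly what dictates the appearance of the term $\tfrac{n-d}{2N+1}$ in the exponents $\beta$ and $\varepsilon(E)$, and for $N=1$ and $v^*=0$ the argument specializes to a variant of \cite[Theorem~2]{Agal2016b}. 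Carrying the constants through steps (i)--(iii) then produces the asserted dependencies of $B_2$ and $E_2$.
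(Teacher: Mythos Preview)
Your proposal is correct and follows essentially the same route as the paper's proof in \S\ref{sec.trp}: the paper also splits $v=v_1^++v_2^++v^-$, bounds $U^{**}_E-\widehat v$ on $\mathcal B^d_r\setminus Z^\varepsilon_y$ via Lemma~\ref{isp.lemmadisp} with $\delta\sim\varepsilon\,r^{-2\sigma}$ (Proposition~\ref{trp.propest}, estimate \eqref{trp.U**est}), decomposes $U^{**}_{N,\varepsilon}-\widehat v$ on the slabs into a node-error part $\varphi^{N,\varepsilon}$ plus the pure Lagrange remainder $V^{N,\varepsilon}-\widehat v$ (Proposition~\ref{trp.propinterp}), and recovers the crucial extra factor $\varepsilon$ by integrating the $(1+|p_\bot|+\tfrac{\pi}{|y|}|z|)^{-n}$ decay of the remainder over the thin slabs (estimate \eqref{trp.I2est}, quoted from \cite{Agal2016b}). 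Your constants differ slightly---you use $(N!)^2/(2N)!$ in the Lagrange remainder where the paper uses the cruder $(2N)^{2N}/(2N)!$, and you phrase the slab integral as $\int_{Z^\varepsilon_y}(1+|p|)^{-n}\,dp=\mathcal O(\varepsilon)$ rather than summing over $z$---but these are equivalent bookkeepings of the same argument.
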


Theorem \ref{itr.thmr} is proved in Section \ref{sec.trp}.

\begin{remark} Under assumptions of Theorem \ref{itr.thmr}, $v_{E,1}^{**}(x)$ and $v_{E,2}^{**}(x)$ are well-defined for $E \geq E_2$, i.e.:
\begin{equation}
 \begin{gathered}
	\zeta^*(p,E) \neq 0 \quad \text{for $p \in \mathcal B^d_{r(E)} \setminus Z^{\varepsilon(E)}_y$, $\quad E \geq E_2$},\\
	r(E) \leq 2\sqrt E \quad \text{for $E \geq E_2$},
 \end{gathered}
\end{equation}
where $\zeta^*$ is defined by \eqref{eq:zeta*}, and also
\begin{equation}
   \varepsilon(E) < \tfrac{\pi}{N+1}, \quad |p| \leq 2 \sqrt E - \tfrac{\pi}{|y|} \quad \text{for $p \in \mathcal B^d_{r(E)}$, $E \geq E_2$}.
\end{equation}
\end{remark}

\begin{remark} The following three conditions are equivalent:
\begin{equation}\label{itr.alphaas2}
	\alpha < \tfrac{n-d}{2(d+2\sigma + \frac{n-d}{4N+1})}, \quad \beta < \tfrac{n-d}{2(d+2\sigma+\frac{n-d}{4N+1})}, \quad \alpha < \beta,
\end{equation}
where $\beta$ is defined in \eqref{itr.beta2def}, $n>d$, $\alpha>0$, and $\sigma>d$. 
In addition, each of conditions \eqref{itr.alphaas2} is equivalent to the following pair of conditions: 
\begin{equation}
  \alpha < \tfrac{n-d}{2(d+2\sigma)}, \quad  N > \tfrac 1 2 \left( \alpha^{-1} - \bigl( \tfrac{n-d}{2(d+2\sigma)} \bigr)^{-1} \right)^{-1} - \tfrac 1 4.
\end{equation}
\end{remark}

\paragraph*{Iterations.}
Let $v$, $w_1$, $w_2$ satisfy the assumptions of Theorem \ref{itr.thmr}. Let 
$u^{(1)}_E = v^{**}_E$ for $v^* = 0$. Note that $u^{(1)}_E$ is similar, but does not coincide with the approximate reconstruction of Theorem 2 of \cite{Agal2016b}; see formulas \eqref{pex.usherr}, \eqref{pex.defush} of the present article. In particular, we have that
\begin{equation}
  \begin{gathered}
  \|u^{(1)}_E- v\|_{L^\infty(D)} + \mathcal{O}(E^{-\alpha_1}),\qquad  E \to + \infty\quad \mbox{with }
  \alpha_1 = \tfrac 1 2 \tfrac{n-d}{n+2\sigma+\tfrac{n-d}{2N+1}}.
  \end{gathered}
\end{equation}
Then, applying the iterative step described above in this subsection we construct nonlinear approximate reconstructions $u_E^{(j)}$, $j \geq 2$, such that
\begin{equation}\label{itr.ujconv}
 \begin{gathered}
	\|u_E^j - v\|_{L^\infty(D)} 
	=  \mathcal{O}(E^{-\alpha_j}) \quad  E \to + \infty \quad \mbox{with }
	\alpha_j = \tfrac 1 2 \tfrac{n-d}{d+2\sigma+\tfrac{n-d}{2N+1}} \biggl(1 - \biggl(\tfrac{n-d}{n+2\sigma+\tfrac{n-d}{2N+1}}\biggr)^j \biggr).
 \end{gathered}
\end{equation}
These approximations $u^{(j)}$ for phaseless inverse scattering under assumptions \eqref{pex.defw}, \eqref{pex.defw1w2r} are analogs of approximations $u^{(j)}$ of \eqref{in.ujtov} for phased inverse scattering. In a similar way with \eqref{in.alphalim} and \eqref{itc.alphalim},
\begin{equation}
 \begin{array}{ll}
  \alpha_j \to \alpha_\infty = \tfrac 1 2 \tfrac{n-d}{d+2\sigma+\tfrac{n-d}{2N+1}} & \text{as $j\to+\infty$},\\
  \alpha_j \to \tfrac j 2 & \text{as $n \to + \infty$, $N\to+\infty$},\\
  \alpha_\infty \to + \infty & \text{as $n \to + \infty$},
  \end{array}
\end{equation}
so that the convergence in \eqref{itr.ujconv} as $E \to + \infty$ is much faster then the convergence in \eqref{pex.usherr}, at least, for large $j$, $n$, $N$.

\section{Numerical experiments}\label{sec.num}

\subsection{Implementation of the Fourier transform and its inverse} \label{sec.imp.ft}
The iterative algorithm presented in sections \ref{sec.itc}, \ref{sec.itr} is implemented in {\tt Matlab} in the two-dimensional case. In our implementation we represent potentials $v$ and $w_l$ by discrete functions $\underline{v}$, $\underline{w}_l$ defined on the space-variable grid
\begin{equation}\label{eq:Gamma_s}
\begin{aligned}
&\Gamma^{\rm s}:=\left\{\tfrac{2}{\spcBd}(\spcind_1,\spcind_2):\spcind_1,\spcind_2\in\mathbb{Z}_\spcBd\right\} 
\mbox{ for }\spcBd\in 2\mathbb{N}, \spcBd\geq \tfrac{2\sqrt{E}}{\pi} \\
&\text{where }\mathbb Z_\spcBd := \bigl\{ -\tfrac{\spcBd}{2}, -\tfrac{\spcBd}{2} + 1, \dots, \tfrac{\spcBd}{2} - 1 \bigr\}. 
\end{aligned}
\end{equation}
In turn, the input data $|f(k,l)|^2$, $|f_l(k,l)|^2$ are measured on a grid
\begin{equation*}
\mathcal{M}_E^{\rm m}=\{(k_m,l_m):m=1..M\}\subset \mathcal{M}_E,
\end{equation*}
whose precise form depends on the experimental setup. This leads to 
the following grid in Fourier space:
\begin{equation*}
	\Gamma^{\rm m}:= \{k_m-l_m:m=1..M\}
\end{equation*}
\paragraph*{Minimal data.} To approximate the inverse Fourier transform of a function, which is supported on $\mathcal B^2_{2\sqrt{E}}$ and sampled on  
the grid $\Gamma^{\rm m}$, by the Fast Fourier transform (FFT), 
$\Gamma^{\rm m}$ has to be rectangular. If $\Gamma^{\rm s}$ is given 
by \eqref{eq:Gamma_s}, a minimal choice of $\Gamma^{\rm m}$ is 
\begin{equation}\label{eq:Gamma_min}
\Gamma^{\rm m}_{\rm min} := \mathcal{B}^2_{2\sqrt{E}}\cap \pi\mathbb{Z}^2.
\end{equation}
A corresponding measurement grid $\mathcal{M}_{E,\rm min}^{\rm m}$ can 
be defined as in \eqref{in.defGE} replacing 
$\mathcal{B}^2_{2\sqrt{E}}$ by $\Gamma^{\rm m}_{\rm min}$. 
Extending a function given on $\Gamma^{\rm m}$ to the exterior grid 
\begin{equation}\label{eq:Gamma_e}
	\Gamma^{\rm e}:= \{\pi(\spcind_1,\spcind_2): \spcind_1,\spcind_2\in\mathbb{Z}_\spcBd,
	\pi^2(\spcind_1^2+\spcind_2^2)>4E\}
\end{equation}
by $0$, we can compute an approximation to the inverse Fourier transform 
on $\Gamma^{\rm s}$ by FFT. 

\paragraph{Discrete Ewald circles.}
The above choice $\Gamma^{\rm m}_{\rm min}$ of the set of measurement points 
is inconvenient both from an experimental and from a computational point 
of view since each point in $\Gamma^{\rm m}_{\rm min}$ corresponds to 
a different incident wave. If a scattering experiment is performed for 
some incident wave or if the solution to the scattering problem is computed 
numerically, the resulting far field pattern can be evaluated at other points 
without essential additional costs. 

Therefore, we now consider input data for $M_1$ 
uniformly distributed incident wave vectors $k$ where each far field pattern is evaluated 
at $M_2$ uniformly distributed scattered wave vectors $l$:
\begin{equation}\label{imp.kl}
	\begin{gathered}
	k(s) := \sqrt E \bigl( \cos(2\pi \tfrac{s}{M_1}), \sin(2 \pi \tfrac{s}{M_1} ) \bigr),\\
	l(s,t) := \sqrt E \bigl( \cos( \tfrac{2\pi s}{M_1} + \tfrac{2\pi t}{M_2} ), \sin( \tfrac{2\pi s}{M_1} + \tfrac{2\pi t}{M_2} ) \bigr),
	\end{gathered}\quad s \in \mathbb Z_{M_1}, \; t \in \mathbb Z_{M_2},
\end{equation}
resulting in 
\begin{align}\label{imp.measgrid}
&	\mathcal M_{E,M_1,M_2}^{\rm m} := \bigl\{ (k(s),l(s,t)) \mid s\in\mathbb Z_{M_1},
	t\in\mathbb{Z}_{M_2}\bigr\}, \quad M = M_1 M_2,\\
&	\Gamma^{\rm m}_{M_1,M_2}:= \{k-l\colon (k,l)\in \mathcal M_{E,M_1,M_2}^{\rm m}\}.
\end{align}
This is illustrated in Fig.~\ref{fig.ES}: The points of 
$\Gamma^{\rm m}=\Gamma^{\rm m}_{M_1,M_2}$ corresponding to a given incident wave vector $k$ are located on the red circle passing through the origin $O$ and centered at point $O'$ such that $\overrightarrow{OO'} = k$. These points of $\Gamma^{\rm m}$ corresponding to a fixed $k$ are also called (discrete) Ewald circle in the physical literature. 
%

\begin{figure}[ht]
\begin{tabular}{ccc}
 \includegraphics[width=.3\linewidth]{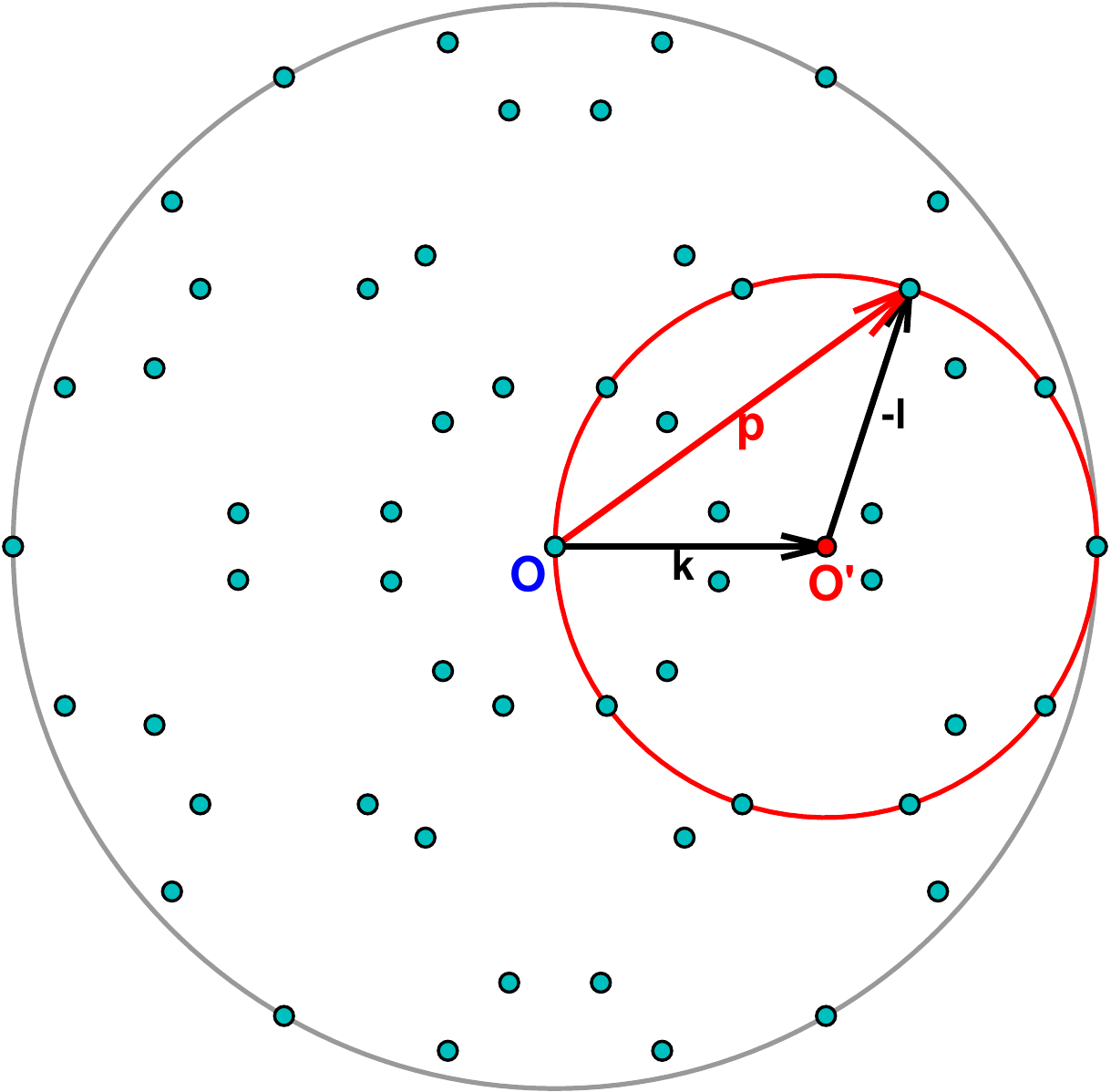}
& \includegraphics[width=.3\linewidth]{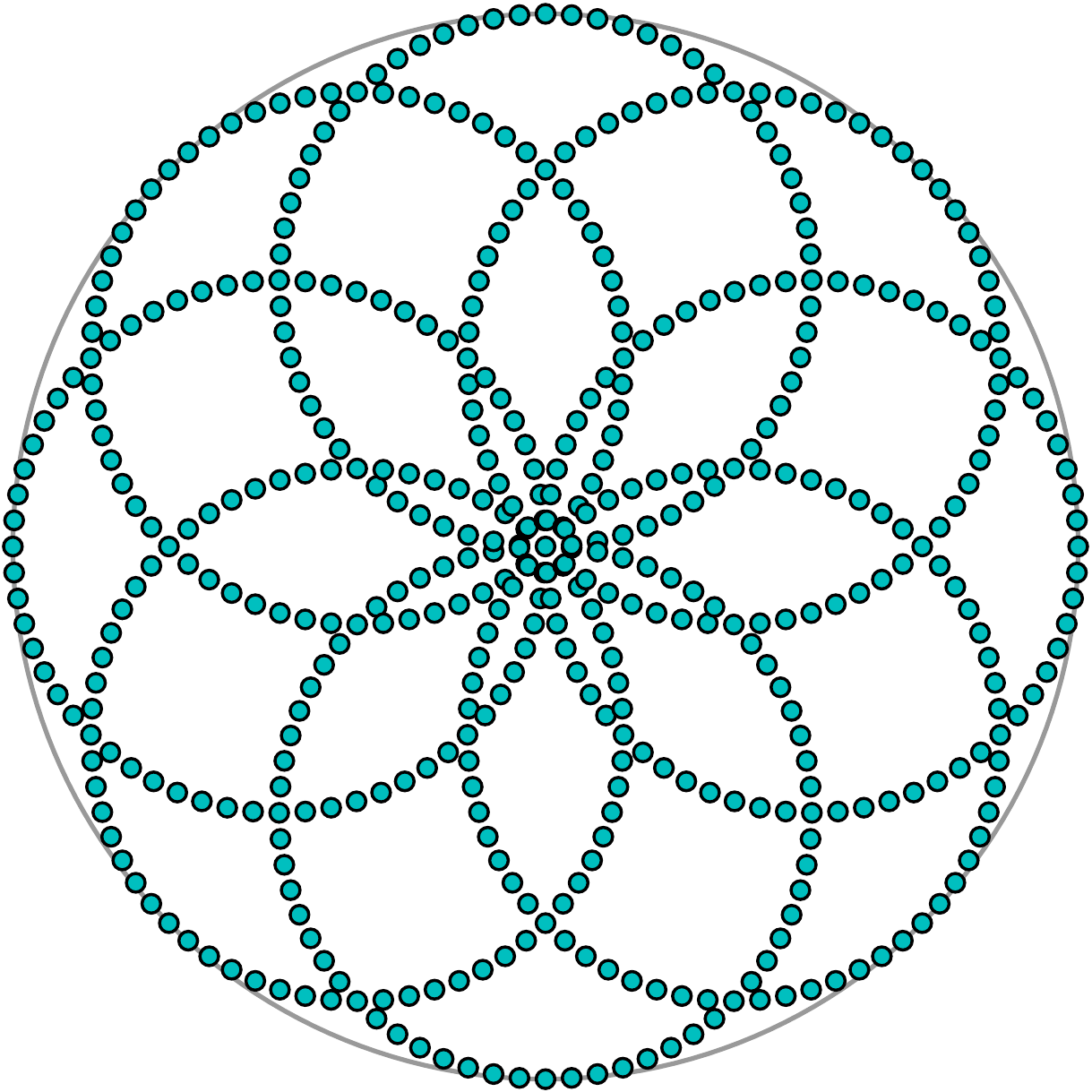}
& \includegraphics[width=.3\linewidth]{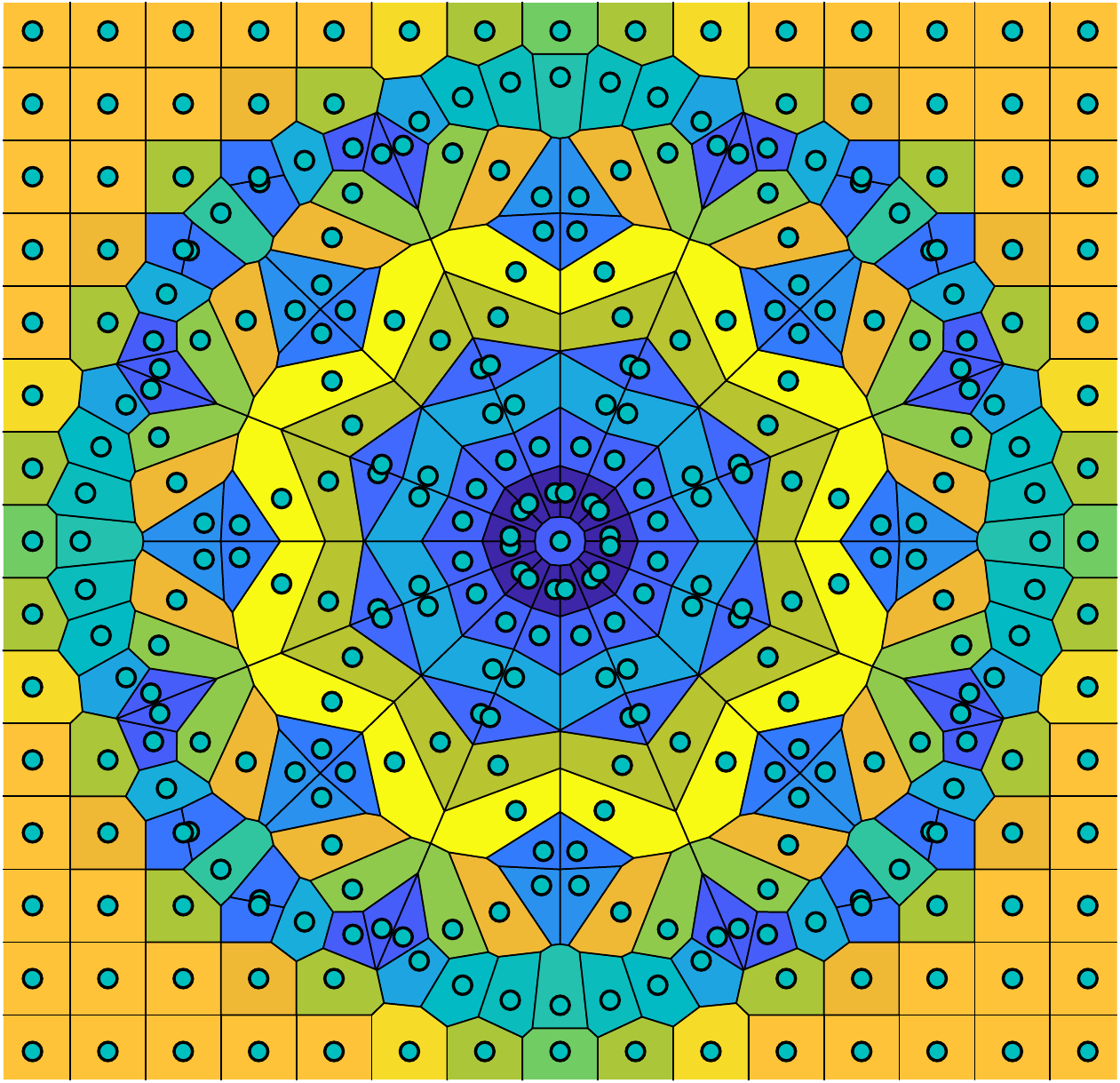}\\
  $M_1 = 6$, $M_2 = 10$
&  $M_1=8$, $M_2 = 64$
&  $M_1 = 8$, $M_2 = 30$
\end{tabular}
\caption{Illustration of sets of transformed measurement points 
$\Gamma^{\rm m}_{M_1,M_2}$ on 
Discrete Ewald circles for different values of the numbers $M_1$ of incident waves 
and far field points $M_2$. The right panel shows 
the Voronoi diagram corresponding to the grid $\Gamma^{\rm e} \cup \Gamma^{\rm m}_{M_1,M_2}$. The color is related to the area of a cell.}
\label{fig.ES}
\end{figure}

Given a discrete function $\underline v \colon \Gamma^s \to \mathbb C$ representing a function $v \colon \mathbb R^2 \to \mathbb C$, the Fourier transform of $v$ can be approximately represented by a discrete function $\underline{\widehat v}\colon \Gamma^m \cup \Gamma^e \to \mathbb C$ such that $\underline{\widehat v} = A \underline v$, where 
\[
A:=\left[(\spcBd\pi)^{-2}\exp(ix\cdot p)\right]_{p\in \Gamma^{\rm m}\cup \Gamma^{\rm e},x\in\Gamma^{\rm s}}.
\] 
Here it is necessary to include the points in $\Gamma^{\rm e}$ to obtain 
small condition numbers of $A$ since the inverse Fourier transform is 
computed by numerically  inverting $A$. Matrix-vector products with 
$A$ and $A^*$ can be computed efficiently without the need to set 
up and store the matrix $A$ using the 
%
Nonequispaced Fast Fourier Transform (NFFT). In our work we use the NFFT implementation of \cite{keiner2009using}. 
The definitions of the grids $\Gamma^{\rm s}$, $\Gamma^{\rm m}$, $\Gamma^{\rm e}$ and of the Fourier transform matrix $A$ are summarized in Algorithm \ref{alg:Fourier}.

\begin{algorithm}[ht]
{\bf data:} spatial grid: 
$\Gamma^{\rm s}:=\{\frac{2}{\spcBd}(\spcind_1,\spcind_2):\spcind_1,\spcind_2\in\mathbb{Z},-\frac{\spcBd}{2} 
\leq \spcind_1,\spcind_2<\frac{\spcBd}{2}\}$ with $\spcBd\in 2\mathbb{N}$, $\spcBd\geq \frac{2\sqrt{E}}{\pi}$ \\
measurement points: $\mathcal{M}_E^{\rm m}=\{(k_m,l_m):m=1..M\}\subset \mathcal{M}_E$\\ 
{\bf results:} 
Fourier space grids inside and outside $\mathcal{B}^2_{2\sqrt{E}}$: 
$\Gamma^{\rm m}$, $\Gamma^{\rm e}$\\
matrix representing Fourier transform: $A$\\
pushforward matrix from the measurement grid $\mathcal M^m_E$ to the Fourier space grid $\Gamma^m$: $\Phi$ 
\smallskip

\begin{algorithmic}
\State $\Gamma^{\rm m}:= \{k_m-l_m:m=1..M\}$; 
\Comment{grid inside $\mathcal{B}^2_{2\sqrt{E}}$}
\State $\Gamma^{\rm e}:= \{\pi(\spcind_1,\spcind_2): \spcind_1,\spcind_2\in\mathbb{Z},-\frac{\spcBd}{2} 
\leq \spcind_1,\spcind_2<\frac{\spcBd}{2}, \pi^2(\spcind_1^2+\spcind_2^2)>4E\}$;
\Comment{grid outside $\mathcal{B}^2_{2\sqrt{E}}$}
\State $A:=\left[(\pi \spcBd)^{-2}\exp(ix\cdot p)\right]_{p\in \Gamma^{\rm m}\cup \Gamma^{\rm e},x\in\Gamma^{\rm s}}$;  \Comment{avoid computation of $A$ and use {\rm NFFT} instead}
\State $(\Phi\underline{f})_p := \frac{1}{\#(M_p)} 
\sum_{m\in M_p} \underline f_m$ where $M_p:= \{m : k_m-l_m=p\}$ 
\end{algorithmic}
\caption{function [$\Gamma^{\rm m}, \Gamma^{\rm e}, A, \Phi] = 
\texttt{fourier\_setup}(\Gamma^{\rm s},\mathcal{M}_E^{\rm m})$}\label{alg:Fourier}
\end{algorithm}

\paragraph{Approximation of the inverse Fourier transform.}
A first idea may be to approximate the continuous inverse Fourier transform 
by the Moore-Penrose inverse of $A$. 
If $A$ is injective, this corresponds to the solution of a least-squares problem 
\[
A^{\dagger}\underline{U} = \argmin_{\underline{w}}\|A\underline{w}-\underline{U}\|^2_2
\]
which can be achieved by the conjugate gradient (CG) method applied 
to the normal equation $(A^*A)\underline{w}=A^*\underline{U}$. The ideal 
situation would be that $A$ is isometric, i.e.\ 
$\|A\underline{w}\|_2=\|\underline{w}\|_2$ for all 
$\underline{w}:\Gamma^{\rm s}\to \mathbb{C}$. In this case 
$A^{\dagger}=A^*$, and the CG method would 
yield the exact solution in the first step. However, in our situation $A$ is typically far from being isometric, so that $A^* A$ is far from the identity matrix, and the CG method requires a big number of iterations. The reason is that even though the continuous Fourier transform is isometric, the Euclidean norm $\|\underline{U}\|_2$ of the sampled version 
$\underline{U}:\Gamma^{\rm m}\cap\Gamma^{\rm e}\to\mathbb{C}$ of 
a function $U:[-\tfrac{\spcBd\pi}{2},\tfrac{\spcBd\pi}{2}]^2\to\mathbb{C}$
can be a bad approximation for $\|U\|_{L^2}$.

To overcome this difficulty, we design a weight matrix $D$ such that $\|D^{1/2}\underline{U}|\|_2\approx \|U\|_{L^2}$. 
Then we approximate the inverse Fourier transform by the Moore-Penrose 
inverse of $A$ with respect to this weighted norm 
$\|D^{1/2}\cdot\|_2$, or in matrix notation
\[
(D^{1/2}A)^{\dagger}D^{1/2}\underline{U}= \argmin_{\underline{w}}\|D^{1/2}A\underline{w}-D^{1/2}\underline{U}\|^2_2.
\]
Recall that by the first-order optimality conditions, which are necessary and sufficient for convex functionals, this minimization problem is equivalent to solving the normal equation $A^* D A \underline w = A^* D \underline U$.

To construct a matrix $D$ such that $\|D^{1/2}\underline{U}|\|_2^2
\approx \int |U(p)|^2\,dp$, we use a Voronoi partition of the square 
$[-\tfrac{\pi \spcBd} 2, \tfrac{\pi \spcBd} 2]^2$ into cells $C(p)$ centered at points 
$p \in \Gamma^m \cup \Gamma^e$. In {\tt Matlab} this subdivision is computed by the built-in function {\tt voronoi}, see Fig.~\ref{fig.ES}. To approximate 
the integral by a Riemann sum we evaluate the area $|C(p)|$ of each cell 
$C(p)$, $p\in\Gamma^{\rm m}\cup\Gamma^{\rm e}$ and choose $D$ as the 
diagonal matrix $D:=\diag(|C(p)|)_{p\in\Gamma^{\rm m}\cup\Gamma^{\rm e}}$. 
The use of this matrix $D$ drastically decreases the number of CG steps and 
allows the approximate evaluation of the inverse Fourier transform 
with just a few CG steps. The reason is that $A^* D A$ is much closer to the identity matrix as $A^* A$.

\begin{remark}
	%
	If the function $\underline U$ is defined only on some subgrid $G \subset \Gamma^m \cup \Gamma^e$, two minor modifications are necessary: (i) The Fourier matrix $A$ must be restricted to the grid $G$ yielding matrix $\tilde A = [A_{pq}]_{p\in\Gamma,q\in\Gamma^s}$; (ii) The matrix of Voronoi weights $D$ must be computed for the grid $G$.
\end{remark}

\subsection{Implementation of the inversion method}\label{sec.imp.inv}

\paragraph{Phaseless Born approximation.}

Recall that in our implementation the potentials $w_l$ are represented by discrete functions 
$\underline {w}_l \colon \Gamma^s \to \mathbb C$, and the measured phaseless farfield data $|f|^2$, $|f_l|^2$  are represented by the discrete functions $\underline F$ and $\underline F_l \colon \mathcal M^{\rm m}_E \to \mathbb R$. Also note that in addition to the background potentials and phaseless farfield data, a cutoff radius $r_1>0$ and a threshold $\delta > 0$ are specified as input data for the algorithm. The cutoff radius $0<r_1\leq 2\sqrt E$ is analogous to the radius $r(E)$ of formulas \eqref{pex.defuc}, \eqref{pex.defush}, whereas the threshold $\delta$ is analogous to threshold $\varepsilon(E)$ of formula \eqref{pex.defush}.

The implementation of the phaseless Born approximation is shown in Algorithm \ref{alg.born}. 
The algorithm is formulated for an arbitrary number $L\geq 2$ of reference potentials, but reduces 
to the algorithm in the theoretical part of this paper if $L=2$. 
The principal part is the computation of the reduced grid $\tilde \Gamma^m \subset \Gamma^m$, which consists of points $p \in \Gamma^m$ meeting the threshold and the cutoff constraints, and of the function $\underline{U} \colon \tilde\Gamma^m \cup \Gamma^e \to \mathbb C$, which is the discrete version of the function $U_{\widehat w_1,\widehat w_2}$ of Subsection \ref{sec.pex}. This computation starts by initializing $\tilde \Gamma^m$ by the empty grid and $\underline U$ by the zero function and proceeds as follows:
\begin{enumerate}
	\item For each point $p \in \Gamma^m$ and each pair $w_{\tilde{l}}, w_l$ of reference potentials, 
	compute the determinant $\underline \zeta_p^{\tilde{l},l}$ corresponding to 
	$\zeta_{\widehat w_1,\widehat w_2}(p)$ of formula \eqref{pex.defzeta}. Since the approximate 
	Fourier transform $\underline{U}_p$ of $\underline{v}$ can be computed from any pair $(\tilde{l},l)$ 
	for which $\underline \zeta_p^{\tilde{l},l}\neq 0$ and since the computation is the more stable the 
	larger $|\underline \zeta_p^{\tilde{l},l}|$, we choose the pair $(\tilde{l}(p),l(p))$, for which 
	$|\underline \zeta_p^{\tilde{l},l}|$ is largest. 
	\item If $|\zeta_p^{\tilde{l}(p),l(p)}| > \delta$ (threshold constraint) include $p$ in $\tilde \Gamma^m$. 
	In addition, 
	if $|p|<r_1$ (cutoff constraint) compute $\underline{U}_p$ using Algorithm \ref{alg:Urec} with 
	appropriate parameters. 
	Rather than implementing an explicit interpolation scheme at points where $\zeta_p^{\tilde{l}(p),l(p)}$ 
	vanishes or is too small as done in \eqref{pex.Uedef} for our theoretical analysis, we use 
	a trigonometric interpolation of the discrete Fourier transform induced by fitting 
	to the remaining points of $\Gamma^{\rm m}$. This procedure is easier to implement 
	and allows the numerical treatment of arbitrary background potentials. 
\end{enumerate}

The last step is to compute the function $\underline{v}^*_E \colon \Gamma^s \to \mathbb C$, which is the discrete version of the phaseless Born approximation $u_E$ of Subsection \ref{sec.pex}, as the inverse Fourier transform of the function represented by $\underline{U}$. This step is explained in Subsection \ref{sec.imp.ft}.

%
%
%
%
%
%

\begin{algorithm}[ht]
{\bf data:} 
measured data $(\underline{F}_0)_m\approx|f(k_m,l_m)|^2$, 
$(\underline{F}_l)_m\approx|f_l(k_m,l_m)|^2$ 
 for $m=1..M$, $l=1..L$\\
(discrete) background potentials 
$\underline{w}_1, \dots,\underline{w}_L:\Gamma^{\rm s}\to \mathbb{C}$\\
$\Gamma^{\rm s}$, $\mathcal{M}_E^{\rm m}$ as in $\texttt{fourier\_setup}$\\
cutoff radius $r_1>0$  \\
threshold $\delta>0$

{\bf results:} potential reconstruction: {$\underline{v}^*_E$} \\
reduced Fourier space grid and Fourier transform matrix: $\widetilde{\Gamma}^{\rm m}$, $\widetilde{A}$\\
Voronoi diagonal weight matrix: $D$
\smallskip

\begin{algorithmic}
\State [$\Gamma^{\rm m}, \Gamma^{\rm e}, A, \underline{\Phi}] = 
\texttt{fourier\_setup}(\Gamma^{\rm s},\mathcal{M}_E^{\rm m})$;
\Comment{see Algorithm \ref{alg:Fourier}}
\State		$\widehat{\underline{w}_l} := A\, \underline{w}_l\quad\mbox{for }l=1,\dots,L$;
\Comment{use {\rm NFFT}}
\State $\mathcal{P}:=\{(\tilde{l},l)\in \{1,..,L\}^2\colon \tilde l <l\}$;
\State $\tilde{\Gamma}^{\rm m}:=[\,]$; \quad
$\underline{U}_p:=0$ for $p\in \Gamma^{\rm m}\cup \Gamma^{\rm e}$;
\For {$p\in \Gamma^{\rm m}$}
\For {$(\tilde{l},l)\in \mathcal{P}$}
\State $\underline \zeta^{\tilde{l},l}_p = \Re \underline{\widehat w}_{\tilde{l},p} \Im \underline{\widehat w}_{l,p} - \Im \underline{\widehat w}_{\tilde{l},p} \Re \underline{\widehat w}_{l,p}$;
\EndFor
\State Choose $(\tilde{l}(p),l(p))\in\operatorname{argmax}_{(\tilde{l},l)\in\mathcal{P}} |\zeta^{\tilde{l},l}_p|$;
\If{$|\underline{\zeta}^{\tilde{l}(p),l(p)}_p|>\delta$}
\State 
$\tilde{\Gamma}^{\rm m}:=\tilde{\Gamma}^{\rm m}\cup\{p\}$;
\If{$|p|<r_1$}
\State $\underline{U}_p:= \texttt{Urec}\left({\underline{\widehat w}_{\tilde{l}(p),p}},\underline{\widehat w}_{l(p),p},
(\Phi \underline{F}_0)_p, (\Phi\underline{F}_{\tilde{l}(p)})_p,
(\Phi \underline{F}_{l(p)})_p\right)$; \Comment{see Algorithm \ref{alg:Urec}}
\EndIf
\EndIf
\EndFor
\State 
$\tilde{A}:=\left[A_{px}\right]_{p\in \tilde{\Gamma}^{\rm m}\cup \Gamma^{\rm e}
,x\in\Gamma^{\rm s}}$; 
\State 
$D:= \texttt{voronoi\_weights}\left(\tilde{\Gamma}^{\rm m}\cup \Gamma^{\rm e}\right)$;
\State
$\underline{v}^*_E := (D^{1/2}\tilde{A})^{\dagger}D^{1/2}[\underline{U}_{p}]
_{p\in \tilde{\Gamma}^{\rm m}\cup \Gamma^{\rm e}}$; \Comment{may be computed by CG and NFFT}
\end{algorithmic}
		\caption{function $[v^*_E,\widetilde{\Gamma}^{\rm m},\tilde{A},D]= \texttt{phaseless\_born\_inv}\left(
\underline{F}_0,\underline{F}_1,\underline{w}_1,\dots, \underline{F}_L,\underline{w}_L,
\Gamma^{\rm s},\mathcal{M}^{\rm m}_E,r_1,\delta\right)$}\label{alg.born}
\end{algorithm}

\paragraph{Iterative algorithm.}

In addition to the input parameters of the phaseless Born approximation, 
the iterative algorithm requires 
cutoff radii $0< r_1 \leq \cdots \leq r_J \leq 2\sqrt E$ to be specified. The implementation of the iterative method is shown in Algorithm \ref{alg.iter}.

The first step is the computation of the phaseless Born approximation $\underline v^*_E \colon \Gamma^s \to \mathbb C$, as well as of the grids $\Gamma^m$, $\Gamma^e$, $\tilde \Gamma^m$, Fourier matrices $A$, $\tilde A$ and Voronoi's  matrix of weights $D$, as explained above. The main part of the algorithm is the iteration procedure producing improved approximations~$\underline v^*_E$.

The iterative step starts by evaluating the scattering amplitudes $f^*_E$ and $f^*_{E,l}$, $l=1,..L$ 
of the potentials represented by discrete functions $\underline v^*_E$, $\underline v^*_E+\underline w_l$ at the points of the grid $\mathcal M^m_E$ yielding discrete functions 
$\underline f^*_E, \underline f^*_{E,l} \colon \mathcal M^m_E \to \mathbb C$. In principle, 
any black-box solver can be used to evaluate the scattering amplitudes. 
In our work we use the solver described in \cite{vainikko:00,hohage:01}.

The iterative step proceeds by computing the discrete function $\underline{\tilde f}^\text{appr}_E \colon \tilde\Gamma^m \cup \Gamma^e \to \mathbb C$, which is the discrete analog of function $\widetilde f^\text{appr}_E$ of \eqref{isp.fappr}, as well as the function $\underline U^{**} \colon \tilde\Gamma^m \cup\ \Gamma^e\to \mathbb C$, which is analogous to function $U^{**}_E$ of \eqref{itc.U**def}. This computation starts by initializing $\underline{\tilde f}^\text{appr}_E$ and $\underline U^{**}$ by the zero functions and continues as follows:
\begin{enumerate}
	\item For each point $p \in \tilde \Gamma^m$ such that $|p|<r_j$, where $j\geq 2$ is the current iteration number\footnote{by convention, $j=1$ corresponds to phaseless Born approximation}, compute $(\underline{\tilde f}^\text{appr}_E)_p$ using Algorithm \ref{alg:Urec} with appropriate parameters. If $L\geq 3$ 
	reference potentials are available, we again choose the most stable pair at each point $p$. 
	
	\item Evaluate $\underline{U}^{**}_p$ according to formula \eqref{itc.U**def}.
\end{enumerate}

The iteration ends by computing $\underline v^*_E$, as the inverse Fourier transform of the function represented by $\underline U^{**}$. The computation is explained in Subsection \ref{sec.imp.ft}.


\begin{algorithm}[ht]
{\bf data:} 
measured data $(\underline{F}_0)_m\approx|f(k_m,l_m)|^2$, 
$(\underline{F}_l)_m\approx|f_l(k_m,l_m)|^2$ 
 for $m=1..M$, $l=1..L$\\
(discrete) background potentials 
$\underline{w}_1,\dots, \underline{w}_L:\Gamma^{\rm s}\to \mathbb{C}$\\
$\Gamma^{\rm s}$, $\mathcal{M}_E^{\rm m}$ as in {\tt fourier\_setup}\\
cutoff radii $0<r_1\leq r_2\leq \dots\leq r_J\leq 2\sqrt{E}$  \\
threshold $\delta>0$

{\bf result:} potential reconstruction {$\underline{v}^*_E$} \smallskip

\begin{algorithmic}
\State $[\Gamma^{\rm m}, \Gamma^{\rm e}, A,\Phi] := 
\texttt{fourier\_setup}(\Gamma^{\rm s},\mathcal{M}_E^{\rm m})$;
\Comment{see Algorithm \ref{alg:Fourier}}
\State $[\underline v^*_E,\widetilde{\Gamma}^{\rm m},\tilde{A},D]:= \texttt{phaseless\_born\_inv}\left(
\underline{F},\underline{F}_1,\underline{w}_1,\dots,\underline{w}_L,\underline{F}_L,
\Gamma^{\rm s},\mathcal{M}^{\rm m}_E,r_1,\delta\right)$;
 \Comment{see Alg.~\ref{alg.born}}
\State $\mathcal{P}:=\{(\tilde{l},l)\in \{1,..,L\}^2\colon \tilde l<l\}$;
\For{$j=2,\dots, J$}
\State $\dscatamu{*}:= \texttt{scattering\_amplitude}(\underline{v}^*_E)$;
\Comment{Solve phased forward problem}
\State $\dscatamul{*}{l}:= \texttt{scattering\_amplitude}(\underline{v}^*_E
+\underline{w}_l)\quad \mbox{for }l=1..L$;
\State $\dscatapprox:=0$; \quad $\underline U^{**}_p:=0$ for $p \in \tilde \Gamma^m \cup \Gamma^e$;
\Comment{grid functions on $\widetilde{\Gamma}^{\rm m}\cup\Gamma^{\rm e}$}
\For{$p\in \widetilde{\Gamma}^{\rm m}$ such that $|p|<r_j$}
\State 
Choose $(\tilde{l}(p),l(p))\in\operatorname{argmax}_{(\tilde{l},l)\in\mathcal{P}} 
\Re \underline{\widehat w}_{\tilde{l},p} \Im \underline{\widehat w}_{l,p} - \Im \underline{\widehat w}_{\tilde{l},p} \Re \underline{\widehat w}_{l,p}$; 
\State $(\underline{\tilde f}^\text{appr}_{E})_p:=\texttt{Urec}\left(
(\Phi\underline{f}^*_{E,\tilde{l}(p)}-\Phi\underline f^*_E)_p,
(\Phi\underline f^*_{E,l(p)}-\Phi\underline f^*_E)_p,
(\Phi \underline{F}_0)_p,(\Phi\underline{F}_{\tilde{l}(p)})_p,
(\Phi \underline{F}_{l(p)})_p\right)$;
\State $\underline U^{**}_p:= (A \underline{v}^*_E)_p
+ (\underline{\tilde f}^\text{appr}_{E})_p-(\Phi\underline f^*_E)_p$;
\EndFor
\State $\underline{v}^*_E := (D^{1/2}\tilde{A})^{\dagger}D^{1/2}\underline U^{**}$;
\Comment{may be computed by CG and NFFT}
\EndFor 
\end{algorithmic}
\caption{function $\underline{v}^*_E$ = \texttt{phaseless\_iterative\_inv}($\underline F_0$, $\underline F_1$, $\underline w_1$,..,$\underline F_L$,
$\Gamma^{\rm s},\mathcal{M}^{\rm m}_E$,$\underline w_L$, $r_1..r_J$, $\delta$)}\label{alg.iter}
\end{algorithm}

\subsection{Numerical results}

\paragraph{Reconstruction errors.}

We consider the reconstruction of the potential $v$ shown in Fig.~\ref{fig.pot} (a)\footnote{this potential is given by the {\tt Matlab}'s function {\tt peaks}} using three background potentials $w_1$, $w_2$, $w_3$ shown at Fig.~\ref{fig.pot} (b), (c), (d).  The differential scattering cross-section of the 
potential $v$ for the incident direction $k=\sqrt E(-1,0)$ at $E=10^2$ and $E=20^2$ is shown at Fig.~\ref{fig.sca}. One can see that for bigger energy the differential scattering cross-section is more concentrated near $l \approx k$.

In the experiments $32$ equidistant incident directions and $256$ equidistant measurement directions 
(for each $k$) were used. Moreover we choose $\spcBd\approx c \sqrt E$ with $c=5$ so that the space grid discretization step is $2/\spcBd \approx 0.4 / \sqrt E$. For simplicity, we choose uniformly increasing cutoff radii $r_j = \sqrt{E}\bigl(1+ \frac{j}{J+1}\bigr)$.

\begin{figure}[ht]
	\begin{center}
	\begin{minipage}{.33\linewidth}
		   \begin{center}
		   \includegraphics[width=1\linewidth]{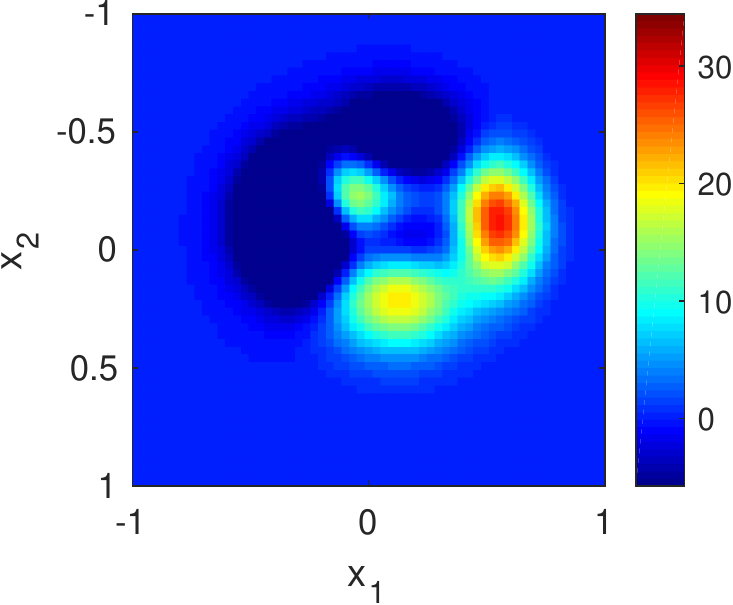} \\
		   (a) $v$ \\ \bigskip

		   \includegraphics[width=1\linewidth]{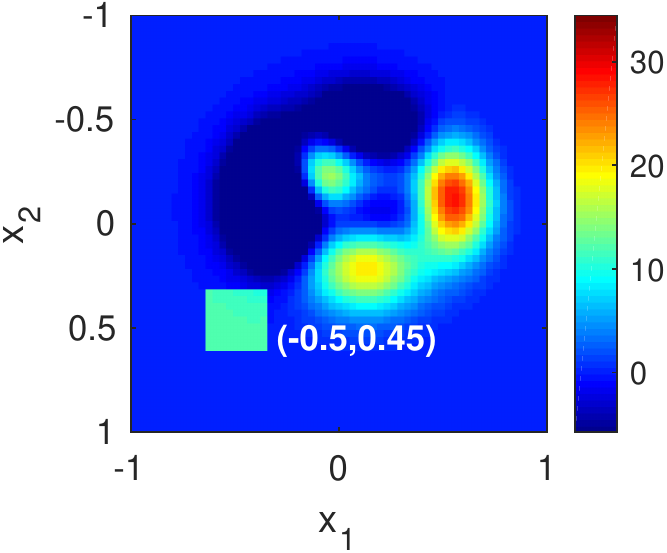} \\
		   (c) $v+w_2$
		   \end{center}
		   
	\end{minipage}\qquad
	\begin{minipage}{.33\linewidth}
			\begin{center}
			\includegraphics[width=1\linewidth]{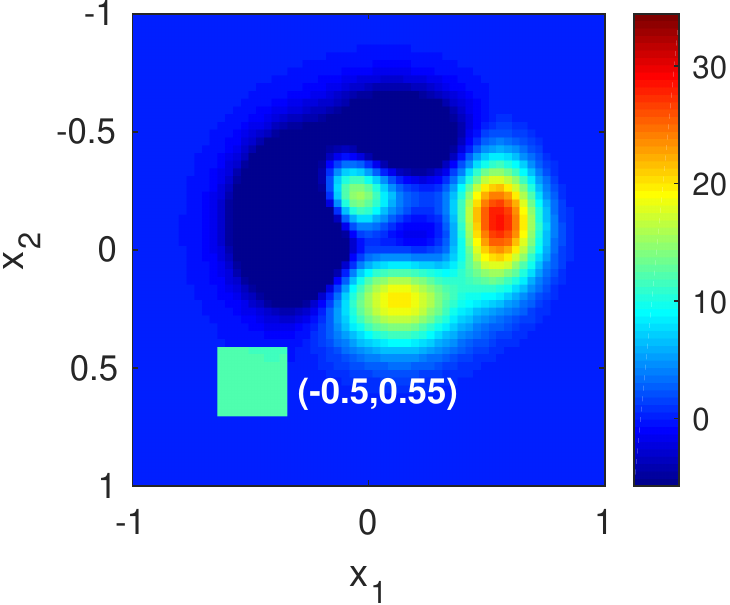}\\
			(b) $v+w_1$ \\\bigskip
			
			\includegraphics[width=1\linewidth]{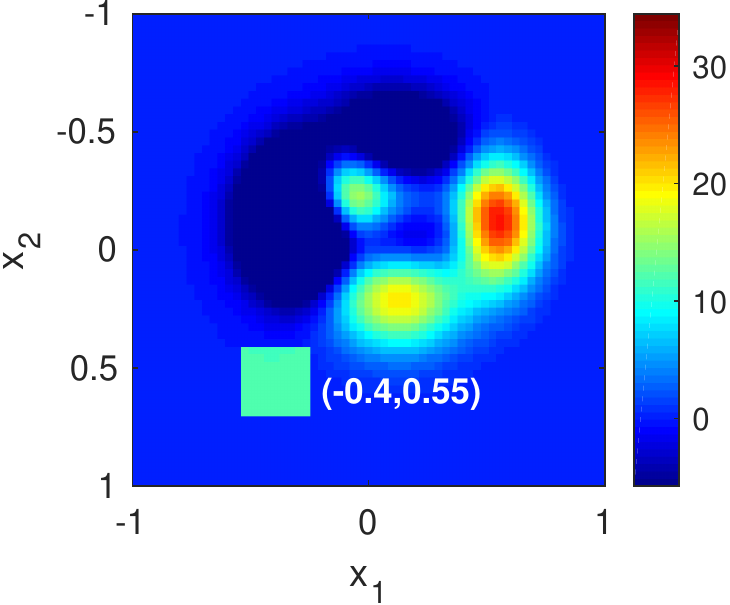}\\
			(d) $v+w_3$
			\end{center}
	\end{minipage}
	\end{center}
	\caption{Unknown potential and background potentials}\label{fig.pot}
\end{figure}

\begin{figure}[ht]
	\begin{center}
	\begin{minipage}{.4\linewidth}
		\begin{center}
		\includegraphics[width=.85\linewidth]{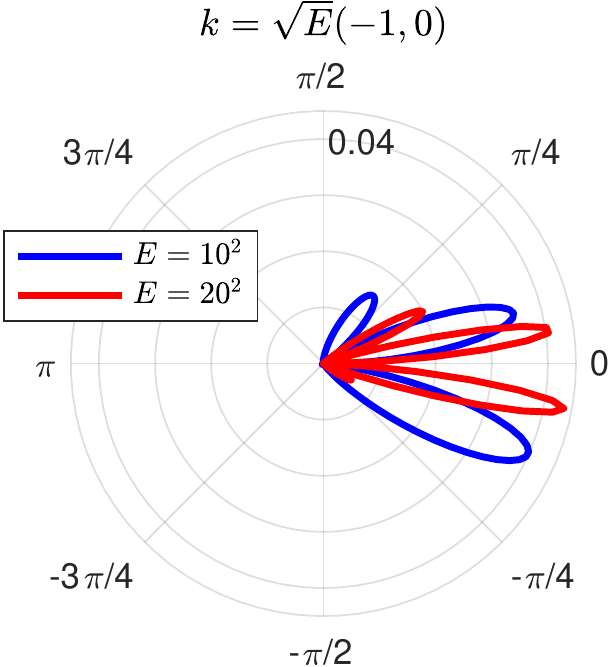}
		\end{center}
	\end{minipage}
	\begin{minipage}{.4\linewidth}
		\begin{center}
		\includegraphics[width=.83\linewidth]{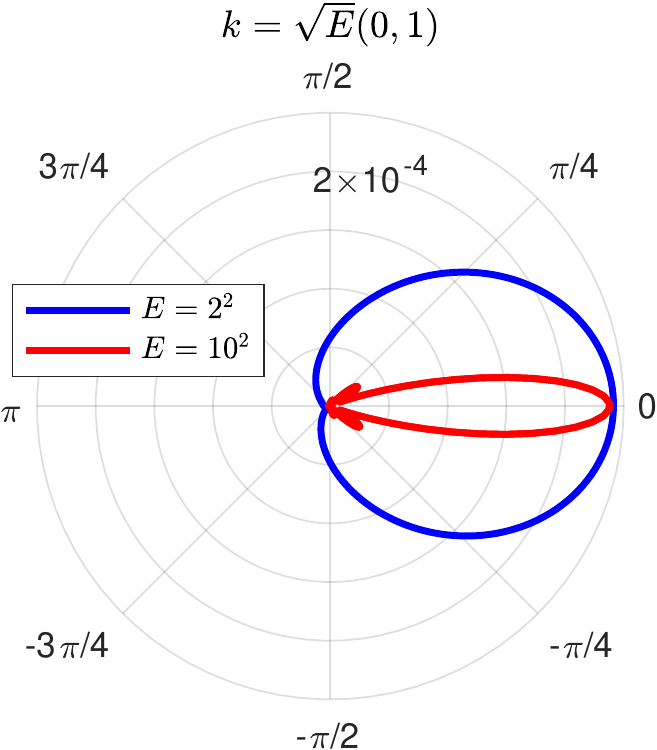}
		\end{center}
	\end{minipage}
	\end{center}
	\caption{Differential scattering cross-sections $|f(k,l)|^2$ for the potential of Fig.~\ref{fig.pot} (a) at $k=\sqrt E(-1,0)$ (left) and for the potential of Fig.~\ref{fig.nsm} (a) at $k=\sqrt E(0,1)$ (right). The distance to the curve in direction $l$ is equal to $|f(k,l)|^2$.}\label{fig.sca}
\end{figure}

In quantum mechanical and optical applications the dominant source of noise is often caused by 
the limited number $N_p$ of measured particles, leading to Poisson distributed data. More precisely, recall that the quantity $|f(k,l)|^2$ is proportional to the probability density of scattering of a particle with initial momentum $k$ into direction $l/|l| \neq k/|k|$. Let $K:=\#\{k_1,\dots,k_m\}$ denote the number
of incident waves. We assume that for each background potential and each incident wave the exposure 
time $t_l(k_m)$ is chosen such that the same expected number of particles $N_p/K(L+1)$ is recorded. 
Thus, our simulated noisy data 
$\underline{F}:=[\underline{F}_0, \underline{F}_1,\dots,\underline{F}_L]$, 
were generated from exact data 
$\underline{F}^\dagger:=[\underline{F}_0^\dagger, \underline{F}_1^\dagger,\dots,\underline{F}_L^\dagger]$
with 
$(\underline{F}^{\dagger}_0)_m:=|f(k_m,l_m)|^2$ and $(\underline{F}^{\dagger}_l)_m:=|f_l(k_m,l_m)|^2$, 
$l=1,\dots,L$ by
\begin{gather*}
  (\underline F_l)_m \sim \frac{1}{t_{l}(k_m)} \operatorname{Pois}\left( t_{l}(k_m) (\underline F_l^\dagger)_m \right) \quad \mbox{with }
  t_{l}(k_m) := \frac{N_p}{K(L+1)\sigma_l(k_m)} , \quad \sigma_l(k_m) := \sum_{n\colon k_n=k_m} (\underline F_l^\dagger)_n
\end{gather*}
(see \cite{HW:16} for more details). 
Here $\operatorname{Pois}(x)$ stands for a  Poisson random variable with mean $x$. 
Recall that 
$\mathbf{E}(\underline{F}_l)_m = (\underline{F}^{\dagger}_l)_m$ and that the pointwise noiselevel 
is $\sqrt{\mathbf{Var}(\underline{F}_l)_m}=t_{l}(k_m)^{-1/2}\sqrt{\vphantom{|}\smash{(\underline{F}_l^{\dagger})}_m}$. 
For the potential in Fig.~\ref{fig.pot} with $E=15^2$ the average pointwise noise level 
$\|\underline{F}-\underline{F}^{\dagger}\|_2/\|\underline{F}^{\dagger}\|_2$ was about 
44\%, 14\%, 4.4\% and 1.4\% for total count numbers $N_p\in\{10^7,10^8,10^9,10^{10}\}$.  
However, we stress that pointwise noise levels, although frequently used, 
are misleading as they tend to infinity as the discretization of the data space becomes 
finer and finer without loss of information, and that $N_p$ (or 
$N_p^{-1/2}$) is a better characterization of the noise level.

As the proposed method only yields an approximate solution at fixed energy even for noiseless data, 
a natural question is how much the reconstructions of our method can be improved by iterative 
regularization methods. Here we choose the Newton conjugate gradient method (NewtonCG) (see \cite{hanke:97b})
with $H^1$ inner product in the preimage space 
 as a commonly used representative of this class of methods, and use the 
results of our method as starting point of NewtonCG. The stopping index of the NewtonCG iteration 
was chosen by the discrepancy principle using the estimated noise level 
$\mathbf{E}\|\underline{F}-\underline{F}^{\dagger}\|_2^2
= \sum_{l,m} \mathbf{Var} (\underline{F}_l)_m= \sum_{l,m} t_l(k_m)^{-1}(\underline{F}^{\dagger}_l)_m 
\approx t_l(k_m)^{-1}(\underline{F}_l)_m$ (even though the discrepancy principle is actually only justified 
for deterministic noise models, see \cite{hanke:97b}).

The number of iterations $J$ is chosen basing on the following observations: for small particle count such as $N_p = 10^7$ after two or three iterations the accumulated noise in reconstruction by our method is already comparable with the reconstruction error and the method should be stopped; for bigger particle counts such as $N_p \geq 10^{10}$ the number of steps can be chosen in a relatively large range without significant impact on the reconstruction error; we use $J=8$ iterations for large particle counts 
$N_p\geq 10^8$.

Figure \ref{fig.css} shows cross-sections of the reconstructed potentials for the reconstructions using 
(a) the phaseless Born approximation, (b) our method and (c) our method combined with NewtonCG. In this experiment we used the values of parameters $E = 10^2$ and $N_p = 10^9$ and the highest possible 
scaling factor for the potential shown in Fig.~\ref{fig.pot} (a), for which our method still works at 
this energy level. This corresponds to $(L^1,L^2,L^\infty)$ norms $(13.5,14.3,37.0)$, respectively. 

Table \ref{tab.error} shows the reconstruction errors using the same methods for different energy 
levels and different expected count numbers $N_p$. Errors are averaged over $5$ experiments. 
These results demonstrate that our method performs well far beyond the scope of validity of the 
Born approximation. It can also be seen from these tables that the best reconstruction results are 
achieved by combining the proposed method with an iterative regularization method such as NewtonCG.

\begin{figure}[ht]
	\begin{minipage}{.32\linewidth}
		\begin{center}
			\includegraphics[width=.8\linewidth]{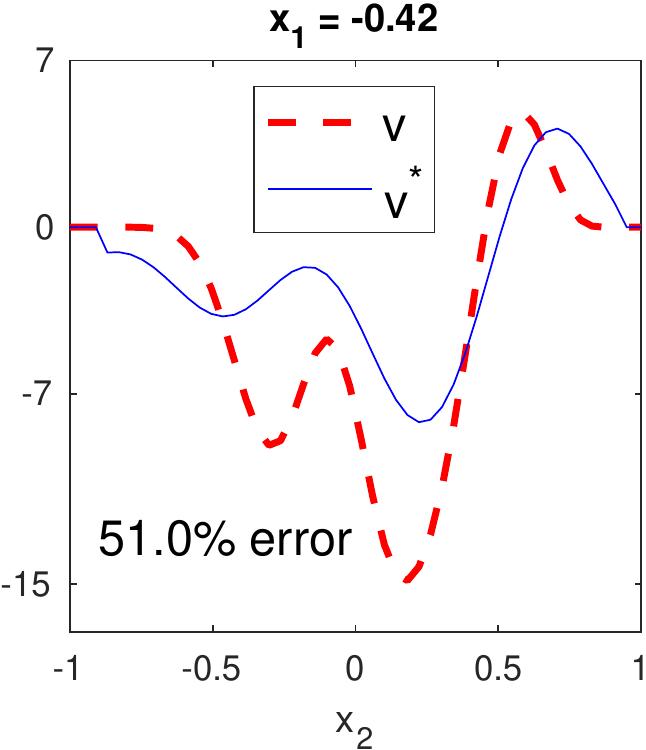} \\
			(a) Born approximation
		\end{center}
	\end{minipage}
	\begin{minipage}{.32\linewidth}
		\begin{center}
			\includegraphics[width=.8\linewidth]{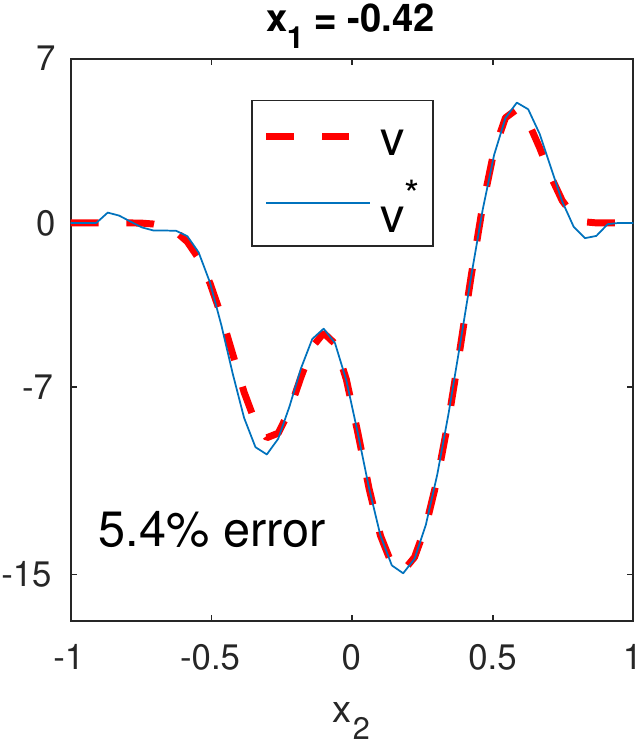} \\
			(b) Our method
		\end{center}
	\end{minipage}
	\begin{minipage}{.32\linewidth}
		\begin{center}
			\includegraphics[width=.8\linewidth]{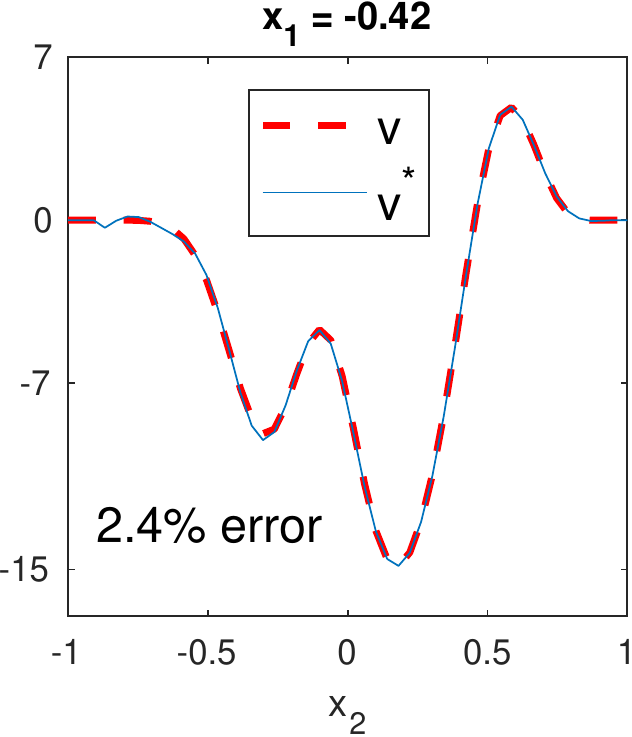} \\
			(c) Our method + NewtonCG
		\end{center}
	\end{minipage}
	\caption{Cross-sections of the exact $v$ and reconstructed potentials $v^*$, and $L^\infty$ relative reconstruction errors. Here $E = 10^2$ and $N_p=10^9$.}\label{fig.css}
\end{figure}

\begin{table}
	\begin{minipage}{.3\linewidth}
		\begin{center}
		\begin{tabular}{r|ccc}
		$N_p \backslash E$ & $10^2$ & $15^2$ & $20^2$ \\ \hline 
		$10^7$ & 53 & 20 & 16 \\ 
		$10^8$ & 53 & 19 & 16 \\ 
		$10^9$ & 53 & 19 & 16 \\ 
		$10^{10}$ & 53 & 20 & 16 \\ 
		\end{tabular} \\ \medskip
		(a) Born approximation
		\end{center}
	
	\end{minipage}
	\begin{minipage}{.3\linewidth}

    \begin{center}
	\begin{tabular}{r|ccc}
	$N_p \backslash E$ & $10^2$ & $15^2$ & $20^2$ \\ \hline 
	$10^7$ & 15 & 8.3 & 7.9 \\ 
	$10^8$ & 7.9 & 3.2 & 2.9 \\ 
	$10^9$ & 6 & 1.2 & 1.3 \\ 
	$10^{10}$ & 5.8 & 1.1 & 0.78 \\ 
	\end{tabular} \\ \medskip
	(b) Our method
	\end{center}
	\end{minipage}
    \begin{minipage}{.3\linewidth}
	\begin{center}	
	\begin{tabular}{r|ccc}
	$N_p \backslash E$ & $10^2$ & $15^2$ & $20^2$ \\ \hline 
	$10^7$ & 6 & 3.6 & 5.2 \\ 
	$10^8$ & 3.8 & 2 & 2.4 \\ 
	$10^9$ & 2.5 & 0.95 & 0.97 \\ 
	$10^{10}$ & 2 & 0.67 & 0.64 \\ 
	\end{tabular} \\ \medskip
	(c) Our method+NewtonCG
	\end{center}
	\end{minipage}
	
	\caption{relative $L^\infty$  reconstruction errors in percents for 
	the potential $v$ and the background potentials $w_1,w_2,w_3$ shown 
	in Fig.~\ref{fig.pot}.}\label{tab.error}
\end{table}

\paragraph{Comparison of convergence regions with NewtonCG.}
We also studied the influence of scaling of background potentials on reconstructions using our method and using NewtonCG. We made tests for the unknown potential of Fig.~\ref{fig.pot} (a) and three type-B potentials based on the Wendland functions with $k=1$. 

First, we tried to recover the potential using NewtonCG with zero initial guess. Simulations show that 
the iterations do not converge to the exact potentials unless the potential is downscaled by a factor of 
75 or bigger. On the other hand, our method works reasonably well, see Tab.~\ref{tab.bgp}, column 3B(W), 
and it can provide an initial guess from which NewtonCG converges. 

We also noticed that the simultaneous downscaling of the unknown and background potentials, even by a factor of 1000, does not solve the convergence problem of NewtonCG: the norm ratio of the background potential to the unknown potential must be also sufficiently small to guarantee the convergence of NewtonCG from the zero initial guess.

\paragraph{Non-smooth potentials.}
Consider the potential of \ref{fig.nsm} (a). The differential scattering cross-sections of this potential at energies $E = 2^2$ and $E=10^2$ for the incident direction $k=\sqrt E (0,1)$ is shown at Fig.~\ref{fig.sca} (right).

Fig.~\ref{fig.nsm} (b), (c) shows reconstructions of this non-smooth potential using the Born approximation and our method without NewtonCG. In the experiment we use background potentials $w_1$, $w_2$ of type A (i.e. $w_2 = i w_1$), the energy is $E=10^2$, and the particle count is $N_p = 10^{15}$. One can see that even though our method is theoretically justified only 
for sufficiently smooth potentials, it performs well also for non-smooth potentials.

\begin{figure}[ht]
	\begin{minipage}{.32\linewidth}
		\begin{center}
			\includegraphics[width=1\linewidth]{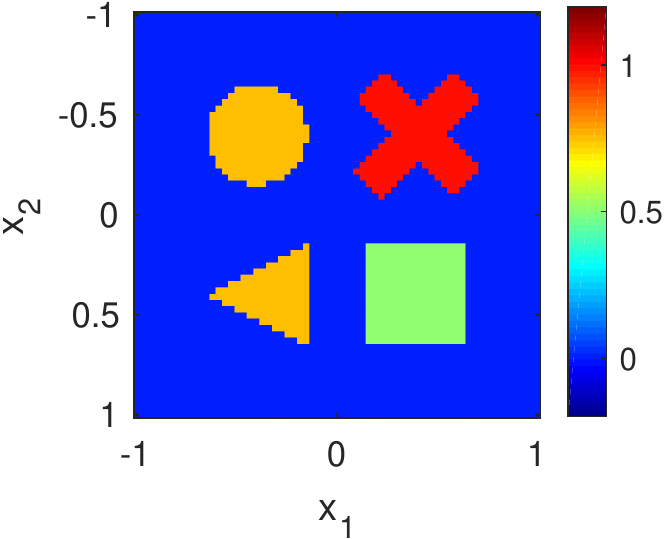} \\
			(a) Exact potential
		\end{center}
	\end{minipage}
	\begin{minipage}{.32\linewidth}
		\begin{center}
			\includegraphics[width=1\linewidth]{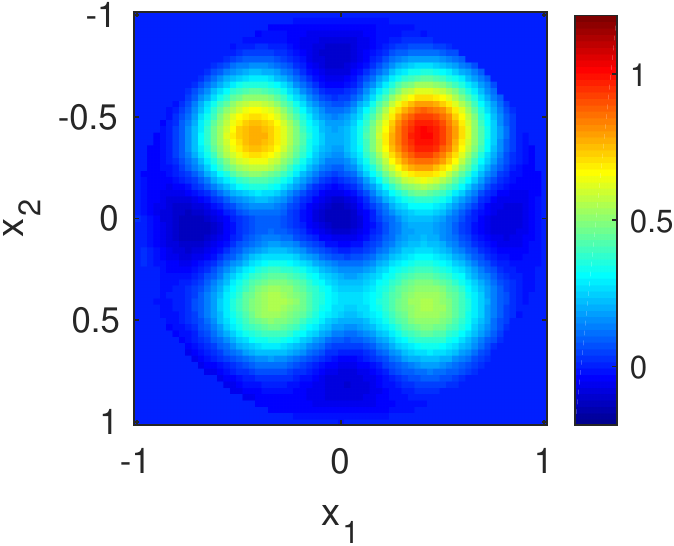} \\
			(b) Born approximation
		\end{center}
	\end{minipage}
	\begin{minipage}{.32\linewidth}
		\begin{center}
			\includegraphics[width=1\linewidth]{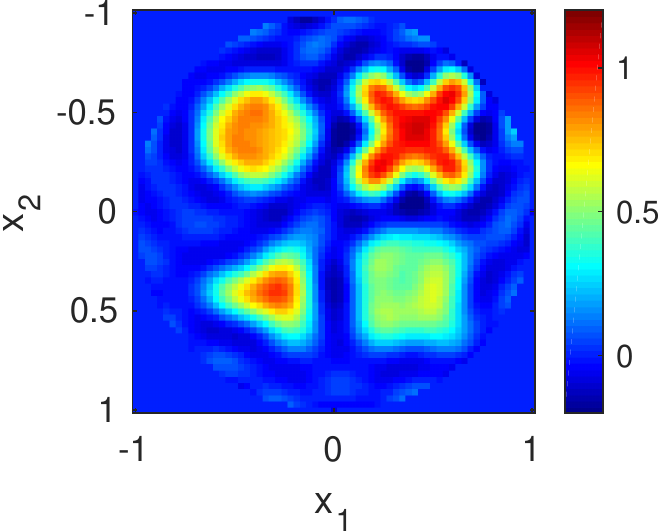} \\
			(c) Our method
		\end{center}
	\end{minipage}
	\caption{Reconstruction of a non-smooth potential for rectangular background potentials 
	of type A, $E=10^2$, and $N_p=10^{15}$.}\label{fig.nsm}
\end{figure}

\paragraph{Choice of background potentials.}
Examples of functions with compact support satisfying assumption \eqref{pex.defw} are 
Wendland's radial basis functions (see \cite{wendland:95}), in particular 
\begin{align}\label{eq:Wendland}
&w(x) = \Phi_k(|x|_2/h)\qquad \mbox{for some }h>0, k\in\{0,1\} \mbox{ and}\\
&\Phi_0(r):= \max(1-r,0)^2,\qquad \Phi_1(r):= \max(1-r,0)^4(4r+1).
\end{align}
More precisely, \eqref{pex.defw} is satisfied with $\sigma= d+2k+1$  
(see \cite{wendland:98}), and $w\in C^{2k}(\mathbb{R}^d)$.

\begin{table}[ht]
  \begin{center}
    \begin{tabular}{l|cccccc}
     & 2A(R) & 2A(W) & 3B(R) & 3B(W) & 2B(R) & 2B(W) \\ \hline 
    Born approximation& 20 & 20 & 19 & 21 & 20 & 25 \\ 
    our method& 2.2 & 3.4 & 1.2 & 5.8 & 6.5 & 16 \\ 
    our method+NewtonCG & 0.95 & 1.9 & 0.95 & 2.3 & 2 & 4.4 \\  \hline
    \end{tabular}
  \end{center}
  \caption{relative $L^\infty$ reconstruction errors in percents for different choices 
	of background potentials. 2A means two background potentials of type A, whereas 2B and 3B refers 
	to two or three background potentials of type B, respectively. (R) refers to indicator functions 
	of rectangles as in Fig.~\ref{fig.pot}, whereas (W) refers to Wendland functions \eqref{eq:Wendland} 
with $k=1$ with similar scaling. All simulations are performed for $E = 15^2$ and $N_p = 10^9$.}\label{tab.bgp}
\end{table}

	\begin{figure}[ht]
		\begin{center}
		\begin{minipage}{.28\linewidth}
			\begin{center}
				\includegraphics[width=0.95\linewidth]{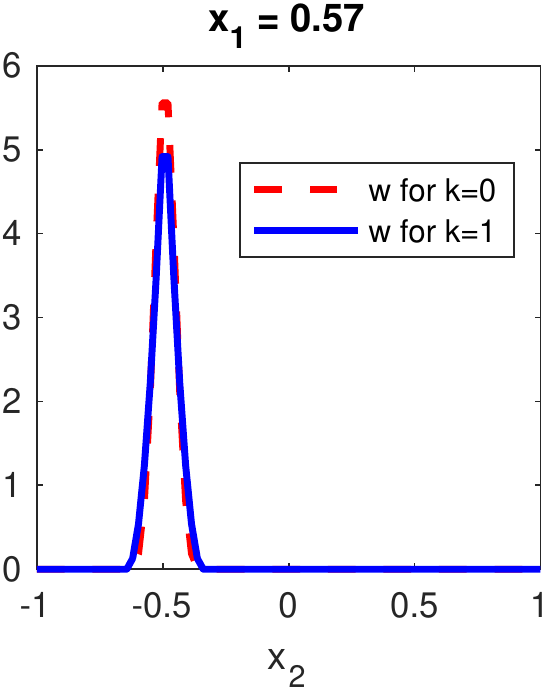}
			\end{center}
		\end{minipage}
		\begin{minipage}{.28\linewidth}
			\begin{center}
				\includegraphics[width=1\linewidth]{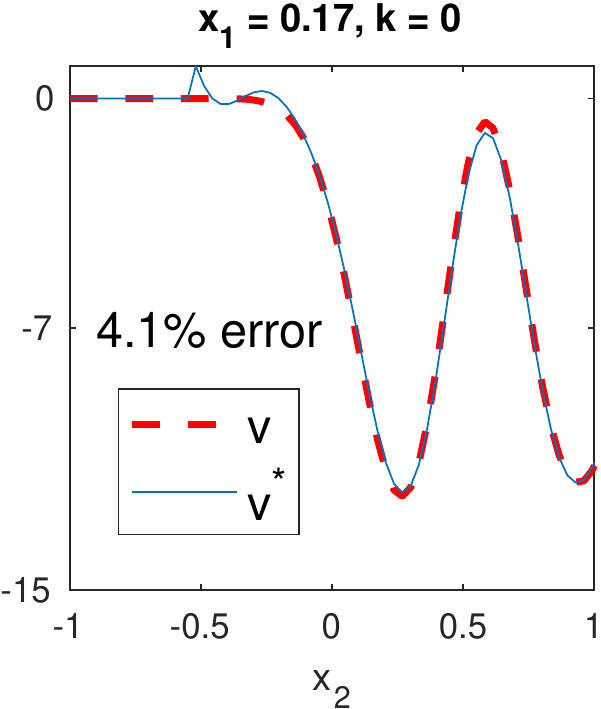}
			\end{center}
		\end{minipage}
		\begin{minipage}{.28\linewidth}
			\begin{center}
				\includegraphics[width=1\linewidth]{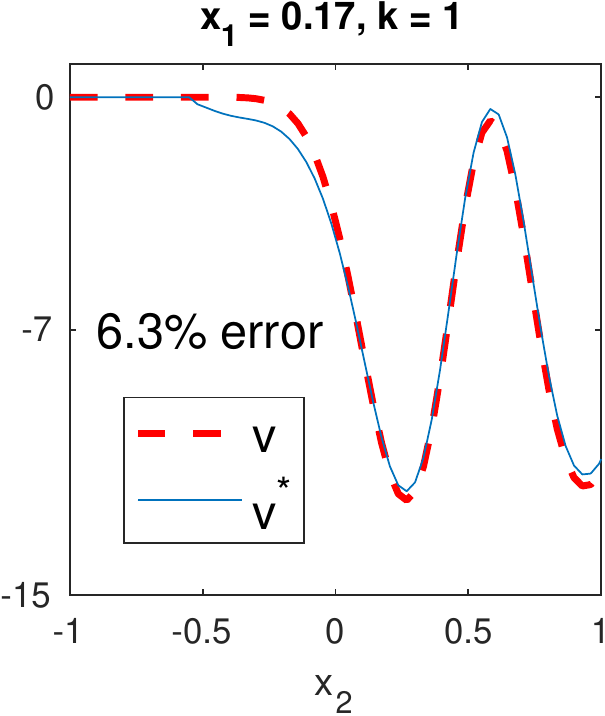}
			\end{center}
		\end{minipage}
		\end{center}
		\caption{Background potentials based on the Wendland functions $\Phi_0$, $\Phi_1$ defined in 
		\eqref{eq:Wendland} (left), cross-sections of the unknown potential $v$ and its reconstructions $v^*$ using our method for three type-B background potentials based on $\Phi_0$ (center) and for three type-B background potentials based on $\Phi_1$ (right)}\label{fig.wnd}
	\end{figure}
	
We also checked to which extent our approach still works if we use background potentials 
which do not satisfy assumption \eqref{pex.defw}, but may be easier to realize experimentally, 
such as indicator functions of squares. Our results are documented in Table \ref{tab.bgp} and 
Fig.~\ref{fig.wnd}. It can be seen that non-smooth background potentials yield better results 
than smooth background potential, but our method still works reasonably well for the 
$C^2$-Wendland function in \eqref{eq:Wendland}. Moreover, the best results are obtained 
for the nonsmooth rectangular potentials even though they do not satisfy assumption 
\eqref{pex.defw}. 

Furthermore, our method yields good results for two background potentials of type A, but 
significantly worse results for two background potentials of type B. However, the results 
for type B background potentials can be improved to a quality comparable to type A potentials 
if either a subsequent NewtonCG iteration is used or if data from a third shifted potential 
are available.

\paragraph{Robustness against errors in background potentials.}
In practice it is usually not possible to measure the background potentials exactly. Therefore, we 
tested our algorithm in the case where the simulated data are generated using a potential $\widetilde w$ 
which is a perturbation of the potential $w$ used in our reconstruction method. 
Figure \ref{fig.bgp} shows cross-sections of the potentials $w$ and $\widetilde w$, and a cross-section
of the reconstruction of $v$. In this example the potential $\widetilde w$ is obtained from $w$ by amplitude scaling ($\times 1.3$), support scaling ($\times 0.8$), translation by $(0.1,0)$, addition of Gaussian noise with standard deviation .22 and convolution with Gaussian kernel with standard deviation 0.5. We use the same unknown potential $v$ of Figure \ref{fig.pot} as before and set $E = 15^2$, $N_p = 10^9$.

In this example the phaseless Born approximation, our method and our method combined with 
NewtonCG give the relative $L^\infty$ errors $(64\%,3.9\%,1.5\%)$, respectively. 
This demonstrates a remarkable robustness of our method against errors in the reference potentials.

\begin{figure}[ht]
	\begin{minipage}{.3\linewidth}
		\begin{center}
			\includegraphics[width=.93\linewidth]{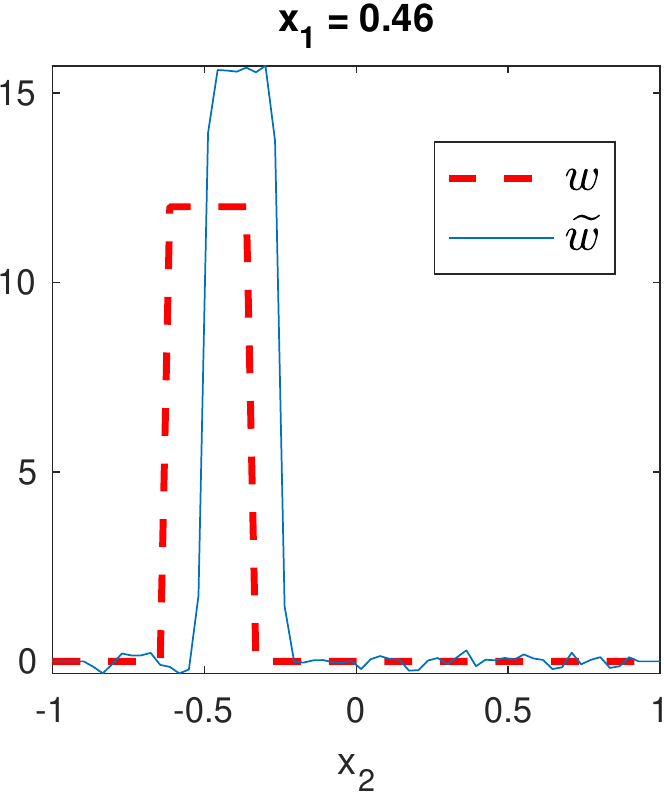} \\
		\end{center}
	\end{minipage}
	\begin{minipage}{.3\linewidth}
		\begin{center}
			\includegraphics[width=1\linewidth]{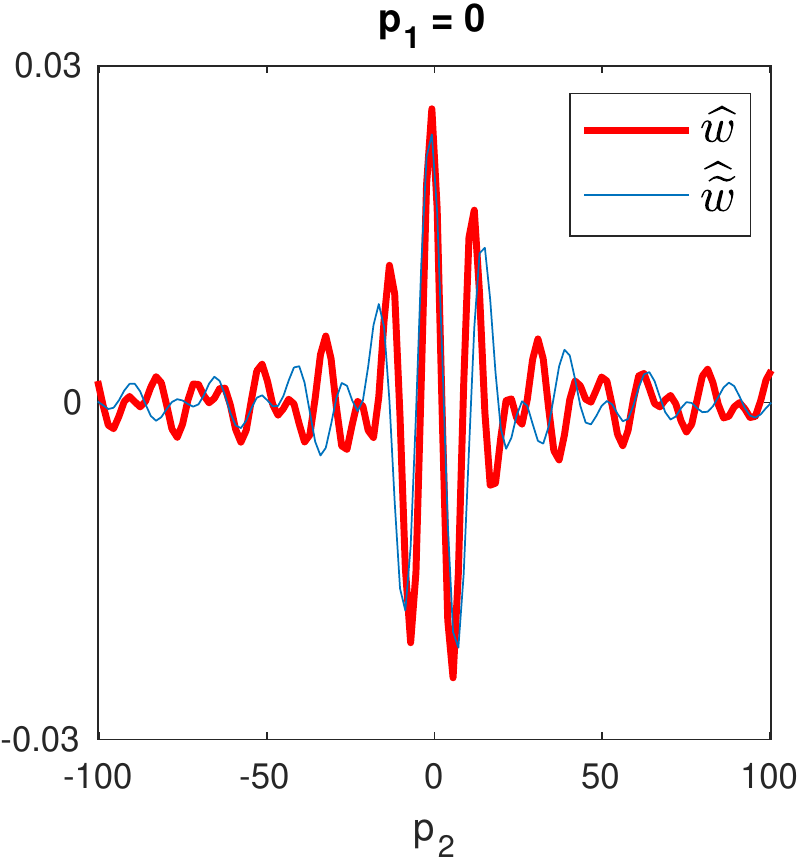} \\
		\end{center}
	\end{minipage}
	\begin{minipage}{.3\linewidth}
		\begin{center}
			\includegraphics[width=.93\linewidth]{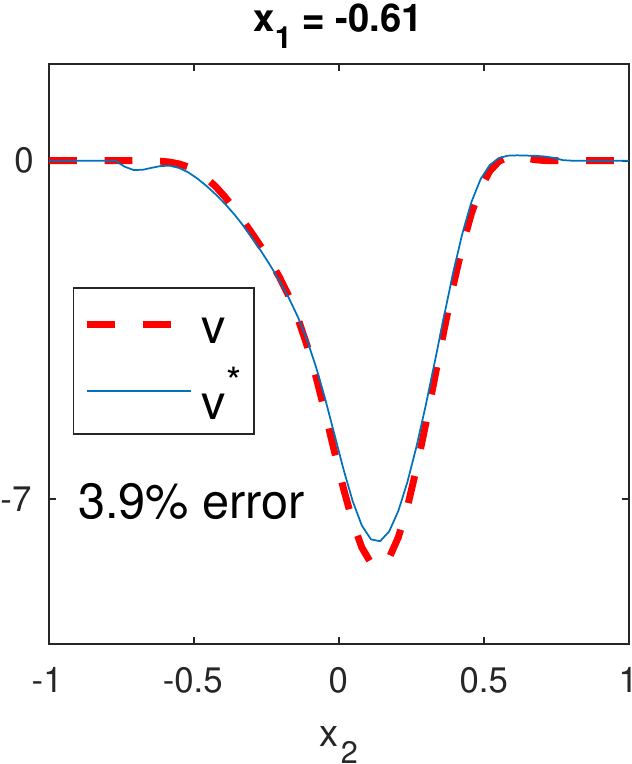} \\
		\end{center}
	\end{minipage}
	\caption{Left and center: Cross-sections of the background potential $w$ used in the reconstruction 
	method
	and the background potential $\widetilde w$ used to generate simulated data: (left) space domain; (center) Fourier domain. Right: Cross-sections of the unknown potential $v$ and the potential $v^*$ recovered by our method.}\label{fig.bgp}
\end{figure}

\section{Conclusions}
We have proposed a method for the solution of phaseless inverse medium scattering problems in the 
presence of known background potentials. Let us summarize the  advantages and disadvantages of our method 
in comparision with iterative regularzation methods such as NewtonCG:
\begin{itemize}
	\item \textit{Global convergence:} Iterative regularization methods require a good initial approximation to the unknown potential, whereas we have shown global convergence of our method as the 
	energy tends to infinity. Numerical experiments at fixed energy demonstrate excellent performance 
	of our method for large potentials and/or weak background potentials where NewtonCG failed. 
		\item \textit{Computation time.} Each iteration step of our reconstruction method requires only 
		($L+1$) solutions of a forward problem, where $L$ is the number of background potentials, and other comparatively cheap operations. In contrast, regularized 
		Newton methods additionally require the solution of a linearized inverse problem in each 
		iteration step, and they typically need a larger number of iteration to achieve the accuracy of 
		our method. In our experiments the proposed method was typically more than 20 times faster than 
		NewtonCG, but we stress that the quotient of computation times strongly depends on the noise level, 
		the energy, the potentials, the choice of the direct solver (setup time vs.~solution time), 
		and other parameters. 
	\item \textit{Use of black-box solvers.} As our method only requires the solution of forward 
	scattering problems, any block-box solver for such problems can be used. In contrast, 
	iterative regularization methods additionally use the Fr\'echet derivative 
	of the forward operator and typically also the adjoint of the Fr\'echet derivative. 
	The implementation of these operations may require modifications of the source code of the forward solver. 
	\item \textit{Asymptotic exactness.} At fixed energy in the absence of noise the NewtonCG is expected to converge to the exact solution under some additional assumptions, in particular the uniqueness of solution and the  tangential cone condition, see \cite{hanke:97b}. In contrast, theoretically our method will converge to the exact potential only in the limit $E \to + \infty$.
	\item \textit{Stopping rules.} There exists a considerable literature on a-posteriori stopping rules 
	for iterative regularization methods and their convergence properties such as the discrepancy 
	principle for NewtonCG (see \cite{hanke:97b}). In contrast, we have used a rather ad-hoc a-priori 
	stopping rule in our experiments with the proposed method for the lack of a better alternative. 
\end{itemize}
This discussion shows that the pros and cons of our method are rather complementary to those of iterative 
regularization methods. Therefore, the proposed method provides a valuable new tool for the solution of 
phaseless inverse medium scattering problems. Hybrid methods using the proposed method to compute 
an initial guess for an iterative regularization method allow to combine the advantages of both methods. 
But in many cases the proposed method itself may already provide a sufficiently accurate reconstruction. 

On the theoretical side we have demonstrated fast convergence of our method as energy tends to infinity 
for exact data and two types of background potentials. It remains for future research to study the 
behavior of the proposed method in the presence of noise and to devise and analyze useful stopping rules.

\appendix
\section{Proofs}
\subsection{Proof of Lemma \ref{isp.lemmadisp}}\label{sec.dis}
Under the assumptions of Lemma \ref{isp.lemmadisp}, the following estimates hold:
\begin{equation}\label{dis.modest}
  \begin{gathered}
      \left| |\Phi\scatam(p) + \Phi\scatamul{*}{j}(p)-\Phi\scatam(p) | - 			|\Phi \scataml{j}(p)| \right| \leq \muc{sec.dis}{1} E^{-\alpha-\frac 1 2}, \\
      p \in \mathcal B^d_{2\sqrt E}, \; E \geq E^*, \; j=1,2,
  \end{gathered}
\end{equation}
where $\muc{sec.dis}{1} > 0$. The proof of estimates \eqref{dis.modest} is based on the following inequalities:
\begin{equation}\label{dis.modest2}
 \begin{aligned}
    \bigl| |\Phi\scatam + \Phi\scatamul{*}{l} - \Phi\scatamu{*}| 
		- |\Phi\scataml{j}| \bigr| 
		& \leq | \Phi\scatam + \Phi\scatamul{*}{l} - \Phi\scatamu{*} 
		- \Phi\scataml{l} | \\
    & \hspace{-6em} = \bigl| (\Phi\scatam - \widehat v) - (\Phi\scatamu{*} - \widehat{v^*_E}) 
		- \bigl( (\Phi\scataml{l} - \widehat{v_{E,l}}) 
		- (\Phi\scatamul{*}{l} - \widehat{v_{E,l}^*}) \bigr) \bigr| \\
    & \hspace{-6em} \leq \bigl| (\Phi\scatam-\widehat v) 
		- (\Phi\scatamu{*} - \widehat{v^*_E}) \bigr| 
		+ \bigl| (\Phi\scataml{l} - \widehat{v_l}) 
		- (\Phi\scatamul{*}{l} - \widehat{v_{E,l}^*}) \bigr|
 \end{aligned}
\end{equation}
for $l = 1,2$.
Applying Lemma \ref{itp.lemma} to $v$, $v^*_E$ and to $v_l$, $v_{E,l}^*$, respectively, we get the estimates
\begin{equation}\label{dis.modest3}
  \begin{aligned}
    \bigl| (\Phi\scatam(p)-\widehat v(p)) - (\Phi\scatamu{*}(p) - \widehat v^*_E(p)) \bigr| & \leq \tfrac 1 2 \muc{sec.dis}{1} E^{-\alpha-\frac 1 2},\\
    \bigl| (\Phi\scataml{l}(p)-\widehat{v_l}(p)) - (\Phi\scatamul{*}{l}(p)
		-\widehat v_{E,l}^*(p)) \bigr| 
		& \leq \tfrac 1 2 \muc{sec.dis}{1} E^{-\alpha-\frac 1 2}, \quad l = 1,2,
  \end{aligned}
\end{equation}
where $\tfrac 1 2 \muc{sec.dis}{1} > 0$ is the constant in the right-hand side of formula (3.14) of \cite{Nov2015en}, $p \in \mathcal B^d_{2\sqrt E}$, $E \geq E^*$. Formula \eqref{dis.modest} follows from \eqref{dis.modest2}, \eqref{dis.modest3}.

Next, using \eqref{dis.modest} we get the following estimate:
\begin{equation}\label{dis.modest4}
  \begin{gathered}
  \bigl| |\Phi\scatam(p) + \Phi\scatamul{*}{l}(p)-\Phi\scatamu{*}(p)|^2 
	- |\Phi\scataml{l}(p)|^2 \bigr| \leq \muc{sec.dis}{1} \muc{sec.dis}{2} E^{-\alpha-\frac 1 2},\\
  p \in \mathcal B^d_{2\sqrt E}, \; E \geq E^*, \; l = 1,2,
  \end{gathered}
\end{equation}
for $\muc{sec.dis}{2}$ such that
\begin{equation}\label{dis.ampsbnds}
    \max\bigl\{ |\Phi\scatam|, |\Phi\scataml{1}|, |\Phi\scataml{2}|, 
		|\Phi\scatamul{*}{1}-\Phi\scatamu{*}|, |\Phi\scatamul{*}{2}-\Phi\scatamu{*}| \bigr\} 
		\leq \tfrac 1 3 \muc{sec.dis}{2},
\end{equation}
where $p \in \mathcal B^d_{2\sqrt E}$, $E \geq E^*$. The estimate \eqref{dis.modest4} can be rewritten as
\begin{equation}\label{dis.modest5}
  \begin{gathered}
    \bigl| 2\Re (\Phi\scatamul{*}{l} - \Phi\scatamu{*}) \Re \Phi\scatam(p) 
		+ 2 \Im (\Phi\scatamul{*}{l} - \Phi\scatamu{*}) \Im \Phi\scatam(p) \\ 
    - \bigl(|\Phi\scataml{l}|^2-|\Phi\scatam|^2 - |\Phi\scatamul{*}{l}-\Phi\scatamu{*}|^2 \bigr) \bigr| \leq \muc{sec.dis}{1} \muc{sec.dis}{2} E^{-\alpha-\frac 1 2}, \quad l = 1,2.
  \end{gathered}
\end{equation}
In turn, one can rewrite \eqref{dis.modest5} in matrix form as
\begin{gather}
   \biggl| 2 M(p,E) \cdot 
   \begin{pmatrix}
      \Re \Phi\scatam(p) \\
      \Im \Phi\scatam(p) 
    \end{pmatrix} - b(p,E) \biggr| \leq \muc{sec.dis}{1} \muc{sec.dis}{2} E^{-\alpha-\frac 1 2} \label{dis.disest}
\end{gather}
with the matrix $M$ and the vector $b$ from Algorithm \ref{alg:Urec}. 
Using \eqref{isp.zetaest}, \eqref{isp.zetageqeps} and \eqref{isp.E**def}, we obtain the estimate \eqref{isp.zetaEgeqeps}.

Using the formula for the inverse matrix, we also get the following equality:
\begin{equation}\label{dis.Minvest}
  \bigl\| M^{-1}(p,E) \bigr\|_F 
	= \frac{\bigl( |\Phi\scatamul{*}{1}-\Phi\scatamu{*}|^2+|\Phi\scatamul{*}{2}-\Phi\scatamu{*}|^2 \bigr)^{\frac 1 2}}{|\zeta^*(p,E)|}, \quad  E \geq E^{**},
\end{equation}
for $p \in \mathcal B^d_{2\sqrt E}$ such that \eqref{isp.zetageqeps} holds and where $\|\cdot\|_F$ denotes the Frobenius matrix norm:
\begin{equation*}
  \|A\|_F = \sqrt{\sum_{i,j=1}^n |a_{ij}|^2} \quad \text{for any square matrix $A = (a_{ij})_{i,j=1}^n$}.
\end{equation*}

Using  \eqref{isp.zetaEgeqeps}, \eqref{dis.ampsbnds}, \eqref{dis.Minvest} we obtain the estimate
\begin{equation}\label{dis.Minvest2}
  \bigl\| M^{-1}(p,E) \bigr\|_F \leq \tfrac{2 \sqrt 2}{3} \delta^{-1} \muc{sec.dis}{2}, \quad E \geq E^{**}.
\end{equation}
Finally, using \eqref{dis.disest} and \eqref{dis.Minvest2}, we get the estimate \eqref{isp.dispest}, where
\begin{equation}\label{dis.mu4def}
  \muc{sec.isp}{3} = \tfrac 2 3 \muc{sec.dis}{1} \pmuc{sec.dis}{2}^2. 
\end{equation}
Lemma \ref{isp.lemmadisp} is proved.

\subsection{Proof of Theorem \ref{itc.thmc}}\label{sec.tcp}

\begin{proposition}\label{tcp.propest} Let $v$ and $w_1$, $w_2$ be the same as in \eqref{in.vprop}, \eqref{pex.defw}, \eqref{pex.defw1w2r}. Let $v^*_E$ be an approximation to $v$ satisfying \eqref{eqs:itp.v*}. 
Let $U^{**}_E(p)$ be defined according to \eqref{itc.U**def}, 
Then
\begin{equation}\label{tcp.U**est}
  \begin{gathered}
  |U^{**}_E(p)-\widehat v(p)| \leq \muc{sec.tcp}{1} E^{-\alpha - \frac 1 2} r^{2\sigma}, \quad \muc{sec.tcp}{1} :=  4^\sigma \pmuc{sec.pex}{1}^{-2} \muc{sec.isp}{3} + \muc{sec.itp}{1}, \\
  E \geq \max \{ \muc{sec.tcp}{2} r^{4\sigma}, E^*\}, \quad \muc{sec.tcp}{2} := 4^{2\sigma+1} \pmuc{sec.pex}{1}^{-4} \pmuc{sec.isp}{1}^2,\\
  p \in \mathcal B^d_r, \quad 1 \leq r \leq 2 \sqrt E,
  \end{gathered}
\end{equation}
where $\sigma$ is the same as in \eqref{pex.defw}, $E^* = E^*(K,D_\text{ext})$ is defined according to \eqref{itp.E*def}, \eqref{isp.Dextdef}, and $\mu_{k,\S X}$, $k \geq 1$, are the constants of Section X.  
\end{proposition}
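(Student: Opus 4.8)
The plan is to split the error into a ``phased'' and a ``phaseless'' contribution which, by definition \eqref{itc.U**def}, add up to $U^{**}_E(p)-\widehat v(p)$: the phased part $\Phi\scatam(p)-\Phi\scatamu{*}(p)+\widehat{v^*_E}(p)-\widehat v(p)$ and the phaseless part $\scatapprox(p)-\Phi\scatam(p)$. The phased part is already handled by Section~\ref{sec.itp}: averaging \eqref{itp.ff*vv*est} over the fibres $\tilde\Phi^{-1}(p)$, i.e.\ \eqref{itp.U**est}, bounds it by $\muc{sec.itp}{1}E^{-\alpha-\frac12}$ on all of $\mathcal B^d_{2\sqrt E}$ for $E\ge E^*$, which is exactly where the $\muc{sec.itp}{1}$ summand of $\muc{sec.tcp}{1}$ comes from. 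So everything reduces to bounding $|\scatapprox(p)-\Phi\scatam(p)|$ via Lemma~\ref{isp.lemmadisp}.

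To apply Lemma~\ref{isp.lemmadisp} I first need a threshold $\delta$ with $|\zeta_{\widehat{w_1},\widehat{w_2}}(p)|\ge\delta$ on the ball $\mathcal B^d_r$, and this is supplied by the lower bound \eqref{pex.zetac}, namely $\zeta_{\widehat{w_1},\widehat{w_2}}(p)=|\widehat w(p)|^2\ge\pmuc{sec.pex}{1}^2(1+|p|)^{-2\sigma}$. For $p\in\mathcal B^d_r$ with $r\ge1$ one has $1+|p|\le 2r$, so I would take $\delta:=4^{-\sigma}\pmuc{sec.pex}{1}^2 r^{-2\sigma}$, whence $\delta^{-1}=4^{\sigma}\pmuc{sec.pex}{1}^{-2}r^{2\sigma}$. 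Then the admissible-energy threshold $E^{**}=\max(4\pmuc{sec.isp}{1}^2\delta^{-2},E^*)$ of \eqref{isp.E**def} collapses to $\max(4^{2\sigma+1}\pmuc{sec.pex}{1}^{-4}\pmuc{sec.isp}{1}^2 r^{4\sigma},E^*)=\max(\muc{sec.tcp}{2}r^{4\sigma},E^*)$, exactly the stated range; \eqref{isp.zetaEgeqeps} simultaneously gives $|\zeta^*(p,E)|\ge\delta/2>0$ on $\mathcal B^d_r$, so $\scatapprox(p)$ in \eqref{itc.U**def} is well defined there; and \eqref{isp.dispest} yields $|\Phi\scatam(p)-\scatapprox(p)|\le\muc{sec.isp}{3}\delta^{-1}E^{-\alpha-\frac12}=4^{\sigma}\pmuc{sec.pex}{1}^{-2}\muc{sec.isp}{3}r^{2\sigma}E^{-\alpha-\frac12}$, which is where the remaining summand of $\muc{sec.tcp}{1}$ comes from.

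It then only remains to add the two bounds by the triangle inequality and to use $r^{2\sigma}\ge1$ in order to write $\muc{sec.itp}{1}E^{-\alpha-\frac12}\le\muc{sec.itp}{1}r^{2\sigma}E^{-\alpha-\frac12}$, which gives the claimed estimate with $\muc{sec.tcp}{1}=4^{\sigma}\pmuc{sec.pex}{1}^{-2}\muc{sec.isp}{3}+\muc{sec.itp}{1}$. I do not expect a genuine obstacle here; the only points that need care are the arithmetic of the constants — picking the right power of $2$ when replacing $1+|p|$ by $r$, and then carrying the $r$-dependence consistently through $\delta$, $\delta^{-1}$ and $\delta^{-2}$ so that the chained energy threshold reduces exactly to $\muc{sec.tcp}{2}r^{4\sigma}$ — together with the verification (from \eqref{isp.zetaEgeqeps}) that $\scatapprox$ is well defined on all of $\mathcal B^d_r$, so that $U^{**}_E$ makes sense on that ball.
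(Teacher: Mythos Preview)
Your proof is correct and follows essentially the same route as the paper: the paper likewise combines the phased bound \eqref{itp.U**est} with Lemma~\ref{isp.lemmadisp} applied at the threshold $\delta=\pmuc{sec.pex}{1}^2 4^{-\sigma}r^{-2\sigma}$ coming from the lower bound \eqref{pex.zetac} (the paper's citation of \eqref{pex.zetar} there is a misprint), and your arithmetic on the constants and the energy threshold matches exactly.
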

\begin{proof}[Proof of Proposition \ref{tcp.propest}]
Due to \eqref{itp.E*def}, \eqref{itc.U**def} and Lemma \ref{isp.lemmadisp}, we have
\begin{equation}\label{tcp.U**est2}
\begin{gathered}
  |U^{**}_E(p)-\widehat v(p)| \leq (\muc{sec.isp}{3} \delta^{-1} + \muc{sec.itp}{1}) E^{-\alpha-\frac 1 2}, \\
  \text{if} \; p \in \mathcal B^d_{2\sqrt E}, \; |\zeta_{\widehat{w_1},\widehat{w_2}}(p)| \geq \delta, \; E \geq E^{**}, \\
  E^{**} = \max\bigl( 4 \pmuc{sec.isp}{1}^2 \delta^{-2} , E^* \bigr).
\end{gathered}
\end{equation}
Besides, in view of \eqref{pex.zetar}, we also have that
\begin{equation}\label{tcp.zetaest}
  \min_{p \in \mathcal B^d_r} |\zeta_{\widehat{w_1},\widehat{w_2}}(p)| = \pmuc{sec.pex}{1}^2 (1+r)^{-2\sigma} \geq \pmuc{sec.pex}{1}^2 4^{-\sigma} r^{-2\sigma}, \quad 1 \leq r.
\end{equation}
Using \eqref{tcp.U**est2} with $\delta = \pmuc{sec.pex}{1}^2 4^{-\sigma} r^{-2\sigma}$ and \eqref{tcp.zetaest}, we get \eqref{tcp.U**est}.
\end{proof}

We represent $v$ as follows:
\begin{equation}\label{tcp.vdec}
    \begin{gathered}
        v(x) = v^+(x,r)+v^-(x,r), \quad x \in \mathbb R^d, \; r > 0,\\
        v^+(x,r) = \int_{\mathcal B^d_r} e^{-ipx} \widehat v(p) \, dp, \\
        v^-(x,r) = \int_{\mathbb R^d \setminus \mathcal B^d_r} e^{-ipx} \widehat v(p) \, dp.
    \end{gathered}   
\end{equation}
Since $v \in W^{n,1}(\mathbb R^d)$, $n > d$, we have 
\begin{equation}\label{tcp.v-est}
  \begin{gathered}
        |v^-(x,r)| \leq \muc{sec.tcp}{3} \|v\|_{n,1} r^{d-n}, \quad x \in \mathbb R^d, \; r > 0, \\
        \muc{sec.tcp}{3} := |\mathbb S^{d-1}|\tfrac{(2\pi)^{-d} d^n}{n-d},
  \end{gathered}
\end{equation}
where $\|\cdot\|_{n,1}$ is defined in \eqref{in.Wn1def}, and $|\mathbb S^{d-1}|$ is the standard Euclidean volume of $\mathbb S^{d-1}$; see \cite{Agal2016b}.

Using \eqref{tcp.U**est} we obtain
\begin{equation}\label{tcp.v+est}
\begin{gathered}
        \left| v^+(x,r)-\int_{\mathcal B^d_r} e^{-ipx} U_{\widehat{w_1},\widehat{w_2}}(p,E) \, dp \right| \leq \muc{sec.tcp}{1} |\mathcal B^d_1| E^{-\alpha - \frac 1 2} r^{d+2\sigma} \\
        \text{for $x \in D$, $1 \leq r \leq 2\sqrt E$, $E \geq \max\{\muc{sec.tcp}{2} r^{4\sigma}, E^*\}$},
\end{gathered}
\end{equation}
where $|\mathcal B^d_1|$ denotes the standard Euclidean volume of $\mathcal B^d_1$.

It follows from \eqref{tcp.vdec}, \eqref{tcp.v-est}, and \eqref{tcp.v+est} that
\begin{equation}\label{tcp.eru1}
  \begin{gathered}
         \biggl|v(x) - \int_{\mathcal B^d_r} e^{-ipx} U_{\widehat{w_1},\widehat{w_2}}(p,E) \, dp\biggr| 
         \leq \muc{sec.tcp}{3} \|v\|_{n,1} r^{d-n} + \muc{sec.tcp}{1} |\mathcal B^d_1| E^{-\alpha-\frac 1 2} r^{d+2\sigma} \\
         \text{for $x \in D$, $1 \leq r \leq 2\sqrt E$, $E \geq \max\{\muc{sec.tcp}{2} r^{4\sigma}, E^*\}$}.
   \end{gathered}
\end{equation}
In addition, if $r = r(E)$, where $r(E)$ is the radius of \eqref{itc.beta1def}, then
\begin{gather}
  \begin{aligned}\label{tcp.r1def}
	r^{d-n} & = (2\tau)^{d-n} E^{-\beta}, \\
	E^{-\alpha - \frac 1 2} r^{d+2\sigma} & = (2\tau)^{d+2\sigma} E^{-\beta},
  \end{aligned}\\
	\begin{gathered}\label{tcp.EgeqrE}
  	 E \geq \muc{sec.tcp}{2} r(E)^{4\sigma} \quad \text{if $E \geq \muc{sec.tcp}{4}$},\\
	\muc{sec.tcp}{4} := \bigl( \muc{sec.tcp}{2} (2\tau)^{4\sigma}\bigr)^{\frac{n-d}{n-d-4\beta \sigma}}.
  	\end{gathered}
\end{gather}
Using formulas \eqref{itc.beta1def}, \eqref{tcp.eru1}, \eqref{tcp.r1def}, \eqref{tcp.EgeqrE}, we obtain
\begin{equation}\label{tcp.v**est}
 \begin{aligned}
	&|v^{**}_E(x)-v(x)| \leq B_1 E^{-\beta} \quad \mbox{for }x \in D, \; E \geq E_1 \quad \mbox{with}\\
	&B_1 := (2\tau)^{d-n} \muc{sec.tcp}{3} \|v\|_{n,1} + (2\tau)^{d+2\sigma} \muc{sec.tcp}{1} |\mathcal B^d_1|, \\
	&E_1 := \max\{ \muc{sec.tcp}{4}, E^*\}.
 \end{aligned}
\end{equation}

Theorem \ref{itc.thmc} is proved.

\begin{remark} We have that
\begin{equation}\label{tcp.tau1def}
  \begin{gathered}
  \text{$E_1 = E^*$ for $\tau \leq \tau_1$} \qquad \mbox{for }
  \tau_1 := \tfrac 1 2 \pmuc{sec.tcp}{2}^{-\frac{1}{4\sigma}} (E^*)^{1 - \frac{4\beta \sigma}{n-d}}.
  \end{gathered}
\end{equation}
\end{remark}

\subsection{Proof of Theorem \ref{itr.thmr}}\label{sec.trp}
\begin{proposition}\label{trp.propinterp} Let $v$ satisfy \eqref{in.vprop} and $v \in W^{n,1}(\mathbb R^d)$. Let $y \in \mathbb R^d$, $y \neq 0$, and $N \geq 1$. Put
\begin{equation}\label{trp.defVe}
 \begin{gathered}
	V^{N,\varepsilon}(p^\varepsilon(p_\bot,z,t)) = \sum_{-N \leq j \leq N, j \neq 0} \widehat v \bigl(p^\varepsilon(p_\bot,z,j)\bigr) L_j(t), \\
	p_\bot \in \mathbb R^d, \; p_\bot \cdot y = 0, \; z \in \mathbb Z, \; t \in (-1,1), \; 0 < \varepsilon < \min\{\tfrac 1 2 |y|, \tfrac{\pi}{N+1}\},
 \end{gathered}
\end{equation}
where $p^\varepsilon(p_\bot,z,t)$, $L_j$ are defined in \eqref{itr.defpzt}, \eqref{itr.deflj}. Then
\begin{equation}\label{trp.Vest1}
 \begin{gathered}
	\bigl| V^{N,\varepsilon}(p) - \widehat v(p) \bigr| \leq \muc{sec.trp}{1} \varepsilon^{2N} \bigl( 1 + |p_\bot| + \tfrac{\pi}{|y|} |z| \bigr)^{-n}, \\
	p = p^\varepsilon(p_\bot,z,t), \; p_\bot \in \mathbb R^d, \; p_\bot \cdot y = 0, \; z \in \mathbb Z, \; t \in (-1,1), \\
	\muc{sec.trp}{1} := \tfrac{2^{n-d}}{\pi^d} (1+d)^n \tfrac{(2N)^{2N}}{(2N)!|y|^{4N}} \|(x\cdot y)^{2N} v(x)\|_{n,1}.
 \end{gathered}
\end{equation} 
\end{proposition}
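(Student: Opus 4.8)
\emph{Proof plan.} The idea is to fix $p_\bot\in\mathbb R^d$ with $p_\bot\cdot y=0$, fix $z\in\mathbb Z$, set $\omega:=y/|y|^2$, and reduce \eqref{trp.Vest1} to a one–dimensional polynomial–interpolation error estimate along the affine line $t\mapsto p^\varepsilon(p_\bot,z,t)$ of \eqref{itr.defpzt}. Comparing \eqref{trp.defVe} with the definition \eqref{itr.deflj} of the elementary Lagrange polynomials $L_j$, one sees that for these fixed $p_\bot$ and $z$ the map $t\mapsto V^{N,\varepsilon}\bigl(p^\varepsilon(p_\bot,z,t)\bigr)$ is precisely the interpolating polynomial of degree $2N-1$ of
\[
g(t):=\widehat v\bigl(p^\varepsilon(p_\bot,z,t)\bigr)
\]
at the $2N$ nodes $t=\pm1,\dots,\pm N$. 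Since $v$ satisfies \eqref{in.vprop}, in particular $v\in L^1(\mathbb R^d)$ with compact support, $\widehat v\in C^\infty(\mathbb R^d)$ with all derivatives obtained by differentiating \eqref{in.fourier} under the integral sign; hence $g\in C^\infty(\mathbb R)$ and the classical interpolation remainder is available.

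First I would write, for $t\in(-1,1)$,
\[
g(t)-V^{N,\varepsilon}\bigl(p^\varepsilon(p_\bot,z,t)\bigr)=g[-N,\dots,-1,1,\dots,N,t]\,\prod_{i=1}^{N}(t^2-i^2),
\]
and estimate the divided difference via the Hermite–Genocchi integral formula (valid for complex-valued $g$), which gives $\bigl|g[-N,\dots,-1,1,\dots,N,t]\bigr|\le\tfrac{1}{(2N)!}\sup_{s\in[-N,N]}\bigl|g^{(2N)}(s)\bigr|$, the relevant interval being the convex hull $[-N,N]$ of the nodes together with $t$. For $t\in(-1,1)$ one also has the crude bound $\bigl|\prod_{i=1}^{N}(t^2-i^2)\bigr|\le(2N)^{2N}$, which accounts for the factor $(2N)^{2N}/(2N)!$ in $\muc{sec.trp}{1}$.

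Second, I would evaluate $g^{(2N)}$. By the chain rule $g^{(2N)}(s)=\varepsilon^{2N}\bigl((\omega\cdot\nabla_p)^{2N}\widehat v\bigr)\bigl(p^\varepsilon(p_\bot,z,s)\bigr)$, and differentiating \eqref{in.fourier} under the integral sign gives $(\omega\cdot\nabla_p)^{2N}\widehat v=(-1)^N|y|^{-4N}\,\widehat{(x\cdot y)^{2N}v}$. The function $(x\cdot y)^{2N}v$ is again compactly supported and lies in $W^{n,1}(\mathbb R^d)$ (a polynomial times a compactly supported $W^{n,1}$ function), so the standard pointwise decay bound for Fourier transforms of $W^{n,1}$–functions — the integration–by–parts estimate underlying \eqref{tcp.v-est}, in the form $|\widehat u(q)|\le\tfrac{2^{n-d}}{\pi^d}(1+d)^n\,\|u\|_{n,1}\,(1+|q|)^{-n}$ — yields
\[
\bigl|g^{(2N)}(s)\bigr|\le\tfrac{2^{n-d}}{\pi^d}(1+d)^n\,\tfrac{\varepsilon^{2N}}{|y|^{4N}}\,\|(x\cdot y)^{2N}v\|_{n,1}\,\bigl(1+|p^\varepsilon(p_\bot,z,s)|\bigr)^{-n}.
\]

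The last step, which I expect to be technically the most delicate, is the geometric comparison $\bigl(1+|p^\varepsilon(p_\bot,z,s)|\bigr)^{-n}\lesssim\bigl(1+|p_\bot|+\tfrac{\pi}{|y|}|z|\bigr)^{-n}$ uniformly for $s\in[-N,N]$; note that the interpolation forces control on the whole interval $[-N,N]$ even though the target point $t$ lies in $(-1,1)$. Here the orthogonality $p_\bot\cdot y=0$ gives $|p^\varepsilon(p_\bot,z,s)|^2=|p_\bot|^2+|y|^{-2}(\pi z+s\varepsilon)^2$, so $|p^\varepsilon(p_\bot,z,s)|\ge|p_\bot|$ in all cases, and for $z\neq0$ one combines this with the hypotheses on $\varepsilon$: $\varepsilon<\pi/(N+1)$ forces $|s\varepsilon|\le N\varepsilon<\pi$ for $|s|\le N$, which keeps $\pi z+s\varepsilon$ in the slab $(\pi z-\pi,\pi z+\pi)$ about $\pi z$, while $\varepsilon<\tfrac12|y|$ bounds the displacement $|s\varepsilon|/|y|$ by $N/2$, and together these give $\tfrac{\pi}{|y|}|z|\le|p^\varepsilon(p_\bot,z,s)|+N/2$, hence the claimed comparison. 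Substituting the three estimates into one another and collecting constants then produces \eqref{trp.Vest1}. The marginal case $|z|=1$, where the slab bound is tightest, should be checked separately to complete the argument.
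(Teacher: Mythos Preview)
Your plan is essentially the paper's proof: reduce to a one–variable Lagrange remainder for $g(t)=\widehat v(p^\varepsilon(p_\bot,z,t))$, identify $g^{(2N)}$ with the Fourier transform of $(x\cdot y)^{2N}v$ up to the factor $\varepsilon^{2N}|y|^{-4N}$, apply the $W^{n,1}$ Fourier–decay bound, and finish with a geometric comparison of $1+|p^\varepsilon|$ against $1+|p_\bot|+\tfrac{\pi}{|y|}|z|$. Two points where your write-up diverges slightly from the paper are worth flagging.

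First, the pointwise Fourier decay you quote should carry the constant $(2\pi)^{-d}(1+d)^n$, not $\tfrac{2^{n-d}}{\pi^d}(1+d)^n$; in the paper the extra factor $2^n$ enters only through the geometric inequality \eqref{trp.pZeest}, namely $1+|p|\ge\tfrac12\bigl(1+|p_\bot|+\tfrac{\pi}{|y|}|z|\bigr)$. So your final constant is right, but its attribution to the Fourier step is off.

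Second, and more substantively, your geometric argument does not close with the stated factor. From $\varepsilon<\tfrac12|y|$ you obtain $\tfrac{\pi}{|y|}|z|\le|p^\varepsilon(p_\bot,z,s)|+\tfrac{N}{2}$, and for $N\ge3$ this additive $N/2$ cannot be absorbed to yield $1+|p^\varepsilon|\ge\tfrac12\bigl(1+|p_\bot|+\tfrac{\pi}{|y|}|z|\bigr)$ uniformly in $s\in[-N,N]$. The paper proves \eqref{trp.pZeest} only for $p\in Z^\varepsilon_y$, i.e.\ for parameters with $|t|<1$, where $|t\varepsilon|<\varepsilon<\tfrac12|y|$ gives directly $|p|\ge\tfrac{\pi|z|}{|y|}-\tfrac12$ and hence the clean factor $\tfrac12$; it then invokes this together with \eqref{trp.Vest2} and \eqref{trp.Dvest} without spelling out the passage to $s\in[-N,N]$. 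If you want to match the paper's constant, argue instead via $\varepsilon<\tfrac{\pi}{N+1}$: for $|s|\le N$ one has $|s\varepsilon|<\pi$, so the nearest integer multiple of $\pi$ to $\pi z+s\varepsilon$ differs from $\pi z$ by at most $\pi$, and the corresponding shift in $|z|$ by at most $1$ is harmless in the weight $\bigl(1+|p_\bot|+\tfrac{\pi}{|y|}|z|\bigr)^{-n}$ up to a fixed multiplicative factor. Your closing remark that $|z|=1$ needs separate attention is on point, but checking that case alone does not repair the $N/2$ loss for large $|z|$ and large $N$.
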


\begin{proof}[Proof of Proposition \ref{trp.propinterp}.] Let $p_\bot \in \mathbb R^d$, $p_\bot \cdot y = 0$ and $z \in \mathbb Z$ be fixed. The following estimate holds:
\begin{equation}\label{trp.Vest2}
 \begin{gathered}
  |V^{N,\varepsilon}(p)-\widehat v(p)| \leq \tfrac{(2N)^{2N}}{(2N)! |y|^{4N} } \varepsilon^{2N} \hspace{-1em} \sup_{s \in [-N,N]} \bigl| \mathcal ( [y \nabla]^{2N} \widehat v)(p^\varepsilon(p_\bot,z,s))\bigr|, \\ p = p^\varepsilon(p_\bot,z,t), \; t \in (-1,1), \;
  (y\nabla \varphi)(\xi) = y_1 \tfrac{\partial \varphi(\xi)}{\partial \xi_1} + \cdots + y_d \tfrac{\partial \varphi(\xi)}{\partial \xi_d}.
 \end{gathered}
\end{equation}
Estimate \eqref{trp.Vest2} follows from the formula
\begin{equation}
  \tfrac{\partial}{\partial s} p^\varepsilon(p_\bot,z,s) = \varepsilon \tfrac{y}{|y|^2},
\end{equation}
from the fact that $P(t) = V^{N,\varepsilon}(p^\varepsilon(p_\bot,z,t))$ in the Lagrange interpolating polynomial for $u(t) = \widehat v(p^\varepsilon(p_\bot,z,t))$ with the nodes at $t = \pm 1$, \dots, $\pm N$, and from the following standard estimate for the Lagrange interpolating polynomial $P(t) = V^{N,\varepsilon}(p^\varepsilon(p_\bot,z,t))$:
\begin{equation}
  \begin{gathered}
  |V^{N,\varepsilon}(p)-\widehat v(p)| \leq \tfrac{(2N)^{2N}}{(2N)!} \hspace{-1em} \sup_{s \in [-N,N]} \bigl| \tfrac{\partial^{2N}}{\partial s^{2N}}\widehat v(p^\varepsilon(p_\bot,z,s))\bigr|, \\
  p=p^\varepsilon(p_\bot,z,t), \quad t \in [-N,N].
  \end{gathered}
\end{equation}

In addition, the following estimate was proved in \cite{Agal2016b}:
\begin{equation}
  (1+|p|)^n |\widehat v(p)| \leq (2\pi)^{-d} (1+d)^n \|v(x)\|_{n,1}, \quad p \in \mathbb R^d.
\end{equation}
If we replace $v(x)$ by $( x \cdot y )^{2N} v(x)$, we get
\begin{equation}\label{trp.Dvest}
  \begin{gathered}
	(1+|p|)^n \bigl| [y\nabla]^{2N} \widehat v(p) \bigr| \leq (2\pi)^{-d} (1+d)^n \| (x \cdot y)^{2N} v(x)\|_{n,1}, \quad p \in \mathbb R^d.
   \end{gathered}
\end{equation}
Besides, we also have that
\begin{equation}\label{trp.pZeest}
  1+ |p| \geq \tfrac 1 2 \bigl( 1 + |p_\bot| + \tfrac{\pi}{|y|} |z(p)| \bigr), \quad p \in Z^\varepsilon_y,
\end{equation}
where we have used that $\varepsilon < \tfrac 1 2 |y|$. Using \eqref{trp.Vest2}, \eqref{trp.Dvest}, \eqref{trp.pZeest}, we get \eqref{trp.Vest1}.

Proposition \ref{trp.propinterp} is proved.
\end{proof}

\begin{proposition}\label{trp.propest} 
Let $v$ and $w_1$, $w_2$ be the same as in \eqref{in.vprop}, \eqref{pex.defw}, \eqref{pex.defw1w2r}. Let $v^*_E$ be an approximation to $v$ satisfying \eqref{eqs:itp.v*}. 
Let $U^{**}_E$ be defined according to \eqref{itc.U**def}, 
and let $U^*_{N,\varepsilon}(p,E)$ be defined according to \eqref{itr.defpzt}, \eqref{itr.defU**e}, \eqref{itr.U**eass}, \eqref{itr.deflj}. Then
\begin{gather}\label{trp.U**est}
  \begin{gathered}
  \bigl| \widehat v(p) - U^{**}_E(p) \bigr| \leq \muc{sec.trp}{2} E^{-\alpha-\frac 1 2}r^{2\sigma} \varepsilon^{-1}, \quad \muc{sec.trp}{2} :=  4^\sigma \tfrac{\pi}{2} \pmuc{sec.pex}{1}^{-2} \muc{sec.isp}{3} + \muc{sec.itp}{1},
  \end{gathered}\\
  \begin{gathered}\label{trp.U**estas}
  \text{for} \;\; p \in \mathcal B^d_r \setminus Z^\varepsilon_y, \;\;\;
  0 < \varepsilon < 1, \quad 1 \leq r \leq 2 \sqrt E, \\
  E \geq \max \bigl( \muc{sec.trp}{3} r^{4\sigma} \varepsilon^{-2} , E^* \bigr), \quad \muc{sec.trp}{3} := 4^{2\sigma}\pi^2 \pmuc{sec.pex}{1}^{-4} \pmuc{sec.isp}{1}^2, 
  \end{gathered}
\end{gather}
where $\sigma$ is the same as in \eqref{pex.defw}, $E^* = E^*(K,D_\text{ext})$ is defined according to \eqref{itp.E*def}, \eqref{isp.Dextdef}, and $\mu_{k,\S X}$, $k \geq 1$, are the constants of Section X.

In addition, if $v \in W^{n,1}(\mathbb R^d)$, $n \geq 0$, then
\begin{gather}\label{trp.U**est2}
 \begin{gathered}
	\bigl| \widehat v(p) - U^{**}_{N,\varepsilon}(p,E) \bigr|  
	 \leq \muc{sec.trp}{4} E^{-\alpha-\frac 1 2}r^{2\sigma} \varepsilon^{-1} + \muc{sec.trp}{1} \varepsilon^{2N} \bigl( 1 + |p_\bot| + \tfrac{\pi}{|y|} |z| \bigr)^{-n}, \\
	  \muc{sec.trp}{4} := 4^{\sigma+1} \max(1,\tfrac{\pi}{|y|})^{2\sigma} N \bigl(1-(\tfrac{N}{N+1})^N\bigr) \muc{sec.trp}{2},
	  \end{gathered}\\
	 \begin{gathered}\label{trp.U**est2as}
	  \text{for} \;\; p = p^\varepsilon(p_\bot,z,t) \in \mathcal B^d_r \cap Z^\varepsilon_y, \;\;\;
	  0 < \varepsilon < \min\{ 1, \tfrac 1 2 |y|, \tfrac{\pi}{N+1} \}, \\
	1 \leq r \leq 2\sqrt E - \tfrac{\pi}{|y|}, \; E \geq \max \bigl( \muc{sec.trp}{3} r^{4\sigma} \varepsilon^{-2} , E^* \bigr).
 \end{gathered}
\end{gather}
\end{proposition}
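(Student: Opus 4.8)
\textit{Proof plan.}
The plan is to prove the two displays \eqref{trp.U**est} and \eqref{trp.U**est2} separately. The first is the type-B counterpart of Proposition~\ref{tcp.propest}: the only structural change is that the lower bound for the determinant $\zeta_{\widehat{w_1},\widehat{w_2}}$ now carries the extra factor $\tfrac{2\varepsilon}{\pi}$ coming from the $\sin(py)$ in \eqref{pex.zetar}, and this is precisely what produces the additional $\varepsilon^{-1}$ in \eqref{trp.U**est}. The second display will then follow by inserting the Lagrange interpolant $V^{N,\varepsilon}$ of Proposition~\ref{trp.propinterp} between $\widehat v$ and $U^{**}_{N,\varepsilon}$ and estimating the two resulting pieces separately.

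For \eqref{trp.U**est}, fix $p\in\mathcal B^d_r\setminus Z^\varepsilon_y$ with $1\le r\le 2\sqrt E$ and $0<\varepsilon<1$. Since $r\ge1$ we have $1+|p|\le 1+r\le 2r$, so \eqref{pex.zetar} yields $|\zeta_{\widehat{w_1},\widehat{w_2}}(p)|\ge\delta$ with $\delta:=\pmuc{sec.pex}{1}^2\,4^{-\sigma}r^{-2\sigma}\tfrac{2\varepsilon}{\pi}$. I would then invoke Lemma~\ref{isp.lemmadisp} together with the phased bound \eqref{itp.U**est}, i.e.\ quote inequality \eqref{tcp.U**est2} from the proof of Proposition~\ref{tcp.propest}, obtaining
\[
|U^{**}_E(p)-\widehat v(p)|\le\bigl(\muc{sec.isp}{3}\delta^{-1}+\muc{sec.itp}{1}\bigr)E^{-\alpha-\frac12}\qquad\text{for }E\ge\max\bigl(4\pmuc{sec.isp}{1}^2\delta^{-2},E^*\bigr).
\]
Substituting $\delta^{-1}=\pmuc{sec.pex}{1}^{-2}\,4^{\sigma}\tfrac{\pi}{2}\,r^{2\sigma}\varepsilon^{-1}$, using $r^{2\sigma}\varepsilon^{-1}\ge1$ (true since $r\ge1$, $\varepsilon<1$) to absorb the $\muc{sec.itp}{1}$ term into the leading one, and substituting $\delta^{-2}$ into the lower bound on $E$, reproduces \eqref{trp.U**est}--\eqref{trp.U**estas} with exactly the stated constants $\muc{sec.trp}{2}$ and $\muc{sec.trp}{3}$.

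For \eqref{trp.U**est2}, write $p=p^\varepsilon(p_\bot,z,t)$ with $t\in(-1,1)$ and put $q_j:=p^\varepsilon(p_\bot,z,j)$. With $V^{N,\varepsilon}$ as in \eqref{trp.defVe} I would split
\[
\widehat v(p)-U^{**}_{N,\varepsilon}(p,E)=\bigl(\widehat v(p)-V^{N,\varepsilon}(p)\bigr)+\sum_{-N\le j\le N,\ j\neq0}\bigl(\widehat v(q_j)-U^{**}_E(q_j)\bigr)L_j(t).
\]
The first term is bounded directly by Proposition~\ref{trp.propinterp} (its hypotheses on $\varepsilon$ being part of \eqref{trp.U**est2as}), which supplies the second summand of \eqref{trp.U**est2}. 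For the remaining sum, the hypotheses \eqref{itr.U**eass} hold here, so by \eqref{itr.pjinset} each node $q_j$, $j=\pm1,\dots,\pm N$, lies in $\mathcal B^d_{2\sqrt E}\setminus Z^\varepsilon_y$; moreover $|q_j|\le|p|+|j-t|\,\varepsilon/|y|<r+\pi/|y|\le 2r\max(1,\pi/|y|)$ since $r\ge1$ and $\varepsilon<\tfrac{\pi}{N+1}$. Hence \eqref{trp.U**est}, already established, applies at each $q_j$ with radius $2r\max(1,\pi/|y|)$, giving $|\widehat v(q_j)-U^{**}_E(q_j)|\le\muc{sec.trp}{2}\,4^{\sigma}\max(1,\pi/|y|)^{2\sigma}\,r^{2\sigma}\varepsilon^{-1}E^{-\alpha-\frac12}$. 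Summing these estimates weighted by $|L_j(t)|$ and collecting constants, the only step left is to bound the Lebesgue-type quantity $\sum_{j=\pm1,\dots,\pm N}|L_j(t)|$ for $t\in(-1,1)$ by $4N\bigl(1-(\tfrac{N}{N+1})^N\bigr)$; this purely combinatorial estimate, to be extracted from the explicit formula \eqref{itr.deflj} using $|t^2-i^2|=i^2-t^2\le i^2$ and $|t+j|\le|j|+1$ on $(-1,1)$, is the step I expect to be the main technical obstacle. Assembling the two bounds yields \eqref{trp.U**est2}--\eqref{trp.U**est2as} with $\muc{sec.trp}{4}$, finishing the proof.
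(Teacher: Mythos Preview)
Your proposal is correct and follows essentially the same route as the paper. For \eqref{trp.U**est} the paper does exactly your computation (quoting \eqref{tcp.U**est2} and plugging in $\delta=4^{-\sigma}\tfrac{2}{\pi}\pmuc{sec.pex}{1}^2 r^{-2\sigma}\varepsilon$ from \eqref{pex.zetar}); for \eqref{trp.U**est2} the paper uses the identical splitting into $V^{N,\varepsilon}-\widehat v$ (handled by Proposition~\ref{trp.propinterp}) and the sum $\sum_{j}(U^{**}_E(q_j)-\widehat v(q_j))L_j(t)$, bounds $|q_j|$ by $r+\pi/|y|$ just as you do, and proves the Lebesgue-constant bound $\sum_{j\neq 0}|L_j(t)|\le 4N\bigl(1-(\tfrac{N}{N+1})^N\bigr)$ via $|L_j(t)|\le \tfrac{2(N!)^2}{(N-j)!(N+j)!}\le 2(\tfrac{N}{N+1})^{|j|}$, which is precisely the computation your hints $i^2-t^2\le i^2$ and $|t+j|\le|j|+1$ lead to.
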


\begin{proof}[Proof of Proposition \ref{trp.propest}.] 
As in the proof of Proposition \ref{tcp.propest}, we have formula \eqref{tcp.U**est2}.

Besides, it follows from \eqref{pex.zetac} that
\begin{equation}\label{trp.zetaest}
  \begin{aligned}
  &|\zeta_{\widehat{w_1},\widehat{w_2}}(p)| \geq 4^{-\sigma} \tfrac{2}{\pi} \pmuc{sec.pex}{1}^2 r^{-2\sigma} \varepsilon, \\
  &p \in \mathcal B^d_r \setminus Z^\varepsilon_y, \quad 0 < \varepsilon < 1, \quad 1 \leq r \leq 2 \sqrt E.
  \end{aligned}
\end{equation}
Using \eqref{trp.zetaest} and \eqref{tcp.U**est2} with $\delta = 4^{-\sigma} \tfrac{2}{\pi} (\muc{sec.pex}{1})^2  r^{-2\sigma} \varepsilon$, we get \eqref{trp.U**est}.

It remains to prove \eqref{trp.U**est2}. Using definition \eqref{itr.defU**e}, one can write
\begin{equation}\label{trp.U**edec}
  \begin{aligned}
  &U^{**}_{N,\varepsilon}(p,E) - \widehat v(p) = \varphi^{N,\varepsilon}(p,E) + V^{N,\varepsilon}(p), \\
  &\mbox{for }p = p^\varepsilon(p_\bot,z,t) \in \mathcal B^d_r \cap Z^\varepsilon_y, \;\; r \leq 2 \sqrt E - \tfrac{\pi}{|y|} \mbox{with }\\
  &\varphi^{N,\varepsilon}(p,E) := \hspace{-1em} \sum_{-N \leq j \leq N, j \neq 0} \hspace{-1em} \left[ U^{**}\bigl(p^\varepsilon(p_\bot,z,j),E\bigr)-\widehat v\bigl(p^\varepsilon(p_\bot,z,j)\bigr) \right] L_j(t),
  \end{aligned}
\end{equation}
where $V^{N,\varepsilon}$ is given by \eqref{trp.defVe}.

Using estimate \eqref{trp.U**est} and formulas \eqref{itr.defU**e}, \eqref{itr.pjinset}, \eqref{trp.U**edec} we get
\begin{equation}\label{trp.phiest1}
 \begin{gathered}
	| \varphi^{N,\varepsilon}(p,E) | \leq \muc{sec.trp}{2} E^{-\alpha-\frac 1 2} \bigl(r+\tfrac{\pi}{|y|}\bigr)^{2\sigma} \varepsilon^{-1} \hspace{-1em} \sum_{-N\leq j \leq N,j \neq 0}|L_j(t)| \\
	\leq 4^\sigma\max(1,\tfrac{\pi}{|y|})^{2\sigma} \muc{sec.trp}{2} E^{-\alpha-\frac 1 2} \varepsilon^{-1} r^{2\sigma} \sum_{-N\leq j \leq N, j\neq 0} |L_j(t)|,\\
	  p = p^\varepsilon(p_\bot,z,t) \in \mathcal B^d_r \cap Z^\varepsilon_y, \quad 1 \leq r \leq 2\sqrt E - \tfrac{\pi}{|y|}.
 \end{gathered}
\end{equation}
Note that for $t \in (-1,1)$ and $j = \pm 1, \dots, \pm N$,
\begin{equation}
  \begin{aligned}
  |L_j(t)| & = \frac{|j||t+j|}{(N-j)!(N+j)!} \prod_{1\leq i \leq N, i\neq j}(i^2-t^2) \\ 
   & \leq \frac{2 N!N!}{(N-j)!(N+j)!} \leq 2 \left( \frac{N}{N+1} \right)^{|j|}.
  \end{aligned}
\end{equation}
Consequently,
\begin{equation}\label{itr.Ljsumest}
  \sum_{-N\leq j \leq N, j \neq 0} |L_j(t)| \leq 4N \bigl( 1 - \bigl(\tfrac{N}{N+1}\bigr)^N \bigr), \quad t \in (-1,1).
\end{equation}
Using \eqref{trp.phiest1}, \eqref{itr.Ljsumest}, we get
\begin{equation}\label{trp.phiest2}
  \begin{aligned}
	&|\varphi^{N,\varepsilon}(p,E)| \leq \muc{sec.trp}{4} E^{-\alpha-\frac 1 2} r^{2\sigma}  \varepsilon^{-1}, \\
&\mbox{for }	p = p^\varepsilon(p_\bot,z,t) \in \mathcal B^d_r \cap Z^\varepsilon_y, \quad 1 \leq r \leq 2\sqrt E - \tfrac{\pi}{|y|}, \\
	&E \geq \max\bigl( \muc{sec.trp}{3} r^{4\sigma} \varepsilon^{-2} , E^* \bigr).
  \end{aligned}
\end{equation}
Using \eqref{trp.Vest1}, \eqref{trp.U**edec}, \eqref{trp.phiest2}, we get \eqref{trp.U**est2}.

Proposition \ref{trp.propest} is proved.
\end{proof}

The final part of the proof of Theorem \ref{itr.thmr} is as follows. In a similar way with \eqref{tcp.vdec}, we represent $v$ as follows:
\begin{equation}\label{trp.vdec}
 \begin{gathered}
    v(x) = v^+_1(x,r) + v^+_2(x,r) + v^-(x,r), \quad x \in D, \; r > 0 \;\mbox{with}\\
    v^+_1(x,r) := \int_{\mathcal B^d_r \setminus Z^\varepsilon_y} e^{-ipx} \widehat v(p) \, dp, \\
    v^+_2(x,r) := \int_{\mathcal B^d_r \cap Z^\varepsilon_y} e^{-ipx} \widehat v(p) \, dp, \\
    v^-(x,r) := \int_{\mathbb R^d \setminus \mathcal B^d_r} e^{-ipx} \widehat v(p) \, dp.
 \end{gathered}
\end{equation}
Since $v \in W^{n,1}(\mathbb R^d)$, estimate \eqref{tcp.v-est} holds.

Using estimates \eqref{trp.U**est}, \eqref{trp.U**est2}, we get:
\begin{gather}\label{trp.v+est}
  \begin{gathered}
      \biggl| v^+_1(x,r)-\int_{\mathcal B^d_r \setminus Z^\varepsilon_y}   e^{-ipx} U^{**}(p,E) \, dp 
        + v^+_2(x,r) - \int_{\mathcal B^d_r \cap Z^\varepsilon_y }   e^{-ipx} U^{**}_{N,\varepsilon}(p,E) \, dp \biggr| \leq I_1 + I_2, \\
  \end{gathered}\\
   I_1 := 2^{2\sigma} \muc{sec.trp}{2}|\mathcal B^d_1| E^{-\alpha-\frac 1 2} r^{d+2\sigma} \varepsilon^{-1}, \label{trp.I1est}\\
   I_2 := \muc{sec.trp}{1} \varepsilon^{2N} \int_{\mathcal B^d_r \cap Z^\varepsilon_y} \hspace{-2em} \bigl(1+ \tfrac{\pi}{|y|} |z(p)| + |p_\bot| \bigr)^{-n} \, dp,\\
      x \in D, \; 1 \leq r \leq 2 \sqrt E - \tfrac{\pi}{|y|}, \; \text{for $E$ as in \eqref{trp.U**est2as}}. \notag
\end{gather}
In addition, we have the following estimate proved in \cite{Agal2016b} (see the proof of Theorem 2 of \cite{Agal2016b}):
\begin{equation}\label{trp.I2est}
    \begin{gathered}
   I_2 \leq \muc{sec.trp}{5} \varepsilon^{2N+1}, \quad \muc{sec.trp}{5} :=  \muc{sec.trp}{1}  \frac{2}{|y|}\frac{|\mathbb S^{d-2}|}{n-d+1} \sum_{z \in \mathbb Z} (1 + \tfrac{\pi}{|y|} |z| )^{d-n-1}.
    \end{gathered}
\end{equation}

In addition, if $r = r(E)$, $\varepsilon = \varepsilon(E)$, where $r(E)$, $\varepsilon(E)$ are defined in \eqref{itr.beta2def}, then
\begin{equation}\label{trp.rviaE}
  \begin{aligned}
      r^{d-n} & = (2\tau)^{d-n} E^{-\beta}, \\
      \varepsilon^{-1}E^{-\alpha-\frac 1 2} r^{d+2\sigma} & = (2\tau)^{d+2\sigma} E^{-\beta}, \\
      \varepsilon^{2N+1} & = E^{-\beta}.
  \end{aligned}
\end{equation}

Next, we establish a lower bound for $E$ for which $r = r(E)$, $\varepsilon = \varepsilon(E)$ satisfy conditions \eqref{trp.U**estas}, \eqref{trp.U**est2as} of Proposition \ref{trp.propest}. Note that
\begin{equation}
  \begin{gathered}\label{trp.Evalid}
	E \geq \muc{sec.trp}{3} r(E)^{4\sigma} \varepsilon(E)^{-2} \quad \text{if $E \geq \muc{sec.trp}{6}$},\\
	\muc{sec.trp}{6} := \left(\muc{sec.trp}{3}(2\tau)^{4\sigma}\right)^{\frac{(n+2\sigma)(2N+1)+n-d}{(n-4\alpha\sigma)(2N+1)-2\alpha(n-d)}}. 
  \end{gathered}
\end{equation}
Besides,
\begin{equation}\label{trp.rEvalid}
	r(E) \leq 2 \sqrt E - \tfrac{\pi}{|y|} \quad \text{if $E \geq 1$}.
\end{equation}
In addition,
\begin{equation}\label{trp.eEvalid}
  \begin{gathered}
   \varepsilon(E) < \min\{1,\tfrac 1 2 |y|, \tfrac{\pi}{N+1}\} \quad \text{if $E > \muc{sec.trp}{7}$}, \\
	\muc{sec.trp}{7} := \bigl( \max\{1,\tfrac{2}{|y|},\tfrac{N+1}{\pi} \}  \bigr)^\frac{2N+1}{\beta}.
   \end{gathered}
\end{equation}

Using the representation \eqref{trp.vdec} and formulas \eqref{itr.beta2def}, \eqref{tcp.v-est}, \eqref{trp.v+est}, \eqref{trp.I1est}, \eqref{trp.I2est}--\eqref{trp.eEvalid} we get
 \begin{equation}\label{trp.B2E2def}
  \begin{aligned}
	&\bigl| v^{**}_E(x) - v(x) \bigr| \leq B_2 E^{-\beta}\quad \mbox{with} \\
	&B_2 := \muc{sec.tcp}{3} (2\tau)^{d-n} \|v\|_{n,1} + 2^{2\sigma} \muc{sec.trp}{2} |\mathcal B^d_1|(2\tau)^{d+2\sigma} + \muc{sec.trp}{5},\\
	&E_2 := \max\{ \muc{sec.trp}{6}, \muc{sec.trp}{7}, E^*\}.
	\end{aligned}
 \end{equation}

Theorem \ref{itr.thmr} is proved.

\paragraph*{Acknowledgement:} TH gratefully acknowledges financial support by DFG through grant 
CRC 755/C02.

\bibliographystyle{plain}
\bibliography{biblio_utf}    

\end{document}